\newtheorem{theorem}{Theorem}
\newtheorem{lemma}[theorem]{Lemma}
\newtheorem{corollary}[theorem]{Corollary}
\newtheorem{open}{Problem}
\title{Combinatorial generation via permutation languages. \\ V. Acyclic orientations}
\author{Jean Cardinal}
\address[Jean Cardinal]{Computer Science Department, Universit\'e Libre de Bruxelles, Belgium}
\email{jean.cardinal@ulb.be}
\author{Hung P. Hoang}
\address[Hung P. Hoang]{Department of Computer Science, ETH Z\"urich, Switzerland}
\email{hung.hoang@inf.ethz.ch}
\author{Arturo Merino}
\address[Arturo Merino]{Department of Mathematics, TU Berlin, Germany}
\email{merino@math.tu-berlin.de}
\author{Ond\v{r}ej Mi\v{c}ka}
\address[Ond\v{r}ej Mi\v{c}ka]{Department of Theoretical Computer Science and Mathematical Logic, Charles University, Prague, Czech Republic}
\email{micka@ktiml.mff.cuni.cz}
\author{Torsten M\"utze}
\address[Torsten M\"utze]{Department of Computer Science, University of Warwick, United Kingdom \& Department of Theoretical Computer Science and Mathematical Logic, Charles University, Prague, Czech Republic}
\email{torsten.mutze@warwick.ac.uk}
\thanks{An extended abstract of this paper was accepted for presentation at SODA~2023.
Arturo Merino was supported by ANID Becas Chile 2019-72200522.
Torsten M\"utze was supported by Czech Science Foundation grant GA~22-15272S}
\begin{document}

\begin{abstract}
In 1993, Savage, Squire, and West described an inductive construction for generating every acyclic orientation of a chordal graph exactly once, flipping one arc at a time.
We provide two generalizations of this result.
Firstly, we describe Gray codes for acyclic orientations of hypergraphs that satisfy a simple ordering condition, which generalizes the notion of perfect elimination order of graphs.
This unifies the Savage-Squire-West construction with a recent algorithm for generating elimination trees of chordal graphs.
Secondly, we consider quotients of lattices of acyclic orientations of chordal graphs, and we provide a Gray code for them, addressing a question raised by Pilaud.
This also generalizes a recent algorithm for generating lattice congruences of the weak order on the symmetric group.
Our algorithms are derived from the Hartung-Hoang-M\"utze-Williams combinatorial generation framework, and they yield simple algorithms for computing Hamilton paths and cycles on large classes of polytopes, including chordal nestohedra and quotientopes.
In particular, we derive an efficient implementation of the Savage-Squire-West construction.
Along the way, we give an overview of old and recent results about the polyhedral and order-theoretic aspects of acyclic orientations of graphs and hypergraphs.
\end{abstract}

\maketitle

\section{Introduction}

In 1953, Frank Gray registered a patent~\cite{gray_1953} for a method to list all binary words of length~$n$ in such a way that any two consecutive words differ in exactly one bit, and he called it the \emph{binary reflected code}.
More generally, a \emph{combinatorial Gray code}~\cite{ruskey_2016} is a listing of all objects of a combinatorial class such that any two consecutive objects differ by a `small local change', sometimes also called a `flip'.
Over the years, Gray codes have been designed for numerous classes of combinatorial objects, including permutations, combinations, integer and set partitions, Catalan objects (binary trees, triangulations etc.), linear extensions of a poset, spanning trees or matchings of a graph etc.; see the surveys~\cite{MR1491049,muetze_2022}.
This area has been the subject of intensive research combining ideas from combinatorics, algorithms, graph theory, order theory, algebra, and discrete geometry.
This enabled recent exciting progress on long-standing problems in this area~(see e.g.~\cite{MR3775825}), and the development of versatile general techniques for designing Gray codes~\cite{MR3126386,MR2844089,MR4391718}.
One of the main applications of Gray codes is to efficiently generate a class of combinatorial objects~(see e.g.~\cite{MR2807540}), and many such algorithms are described in the most recent volume of Knuth's book `The Art of Computer Programming'~\cite{MR3444818}.

\subsection{The Steinhaus-Johnson-Trotter algorithm}
\label{sec:sjt}

The \emph{Steinhaus-Johnson-Trotter algorithm}, also known as `plain changes', is one of the classical Gray codes for generating permutations.
Specifically, it lists all permutations of $[n]:=\{1,2,\ldots,n\}$ so that every pair of successive permutations differs by exactly one adjacent transposition, i.e., by swapping two neighboring entries of the permutation.
Using suitable auxiliary arrays, this algorithm can be implemented in time~$\cO(1)$ per visited permutation.

The Steinhaus-Johnson-Trotter ordering of permutations can be defined inductively as follows:
For $n=1$ the listing consists only of a single permutation~1.
To construct the listing for permutations of~$[n]$ for $n\geq 2$, we consider the listing for permutations of~$[n-1]$, and we replace every permutation~$\pi$ in it by the sequence of permutations obtained by inserting the new largest symbol~$n$ in all possible positions in~$\pi$ from right to left, or from left to right, alternatingly.
It is easy to check that this indeed gives a listing of all permutations of~$[n]$ by adjacent transpositions.
Moreover, as $n!$ is even for $n\geq 2$, the listing is cyclic, i.e., the last and first permutation differ only in an adjacent transposition.
For example, for $n=2$ we get the listing $1{\color{red}2},{\color{red}2}1$, for $n=3$ we get $12{\color{red}3},1{\color{red}3}2,{\color{red}3}12,{\color{red}3}21,2{\color{red}3}1,21{\color{red}3}$, and for $n=4$ we get $123{\color{red}4},12{\color{red}4}3,1{\color{red}4}23,{\color{red}4}123,{\color{red}4}132,1{\color{red}4}32,13{\color{red}4}2,132{\color{red}4},312{\color{red}4},31{\color{red}4}2,3{\color{red}4}12,{\color{red}4}312,{\color{red}4}321,3{\color{red}4}21,32{\color{red}4}1,321{\color{red}4},231{\color{red}4},\linebreak[4] 23{\color{red}4}1,2{\color{red}4}31,{\color{red}4}231,{\color{red}4}213,2{\color{red}4}13,21{\color{red}4}3,213{\color{red}4}$; see Figure~\ref{fig:p4sjt}.
In those listings, the newly inserted symbol~$n$ is highlighted, which allows tracking its zigzag movement.

Williams~\cite{MR3126386} found a strikingly simple equivalent description of the Steinhaus-Johnson-Trotter ordering via the following greedy algorithm: Start with the identity permutation, and repeatedly perform an adjacent transposition with the largest possible value that yields a previously unvisited permutation.

The results in this paper can be seen as far-ranging generalizations of these two alternative descriptions of the same fundamental ordering.

\subsection{Flip graphs, lattices, and polytopes}
\label{sec:flip}

Any Gray code problem gives rise to a corresponding \emph{flip graph}, which has as vertices the combinatorial objects of interest, and an edge between any two objects that differ by the specified flip operation.

\begin{wrapfigure}{r}{0.5\textwidth}
\centering
\includegraphics[page=1]{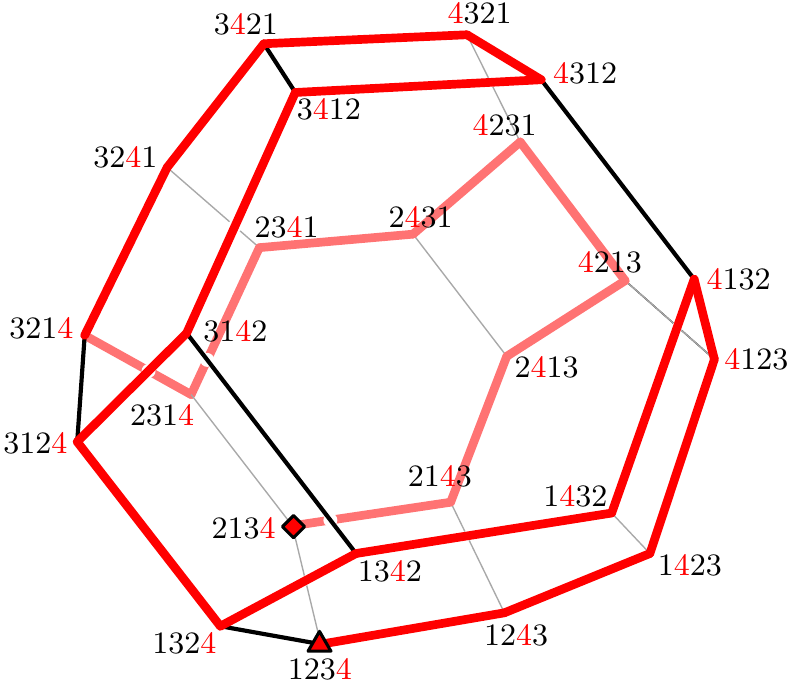}
\caption{The 3-dimensional permutohedron with the Steinhaus-Johnson-Trotter Hamilton path.
The start and end vertex are highlighted by a triangle and diamond, respectively, and can be joined to a Hamilton cycle.}
\label{fig:p4sjt}
\vspace{-3mm}
\end{wrapfigure}
For example, the flip graph on binary words of length~$n$ under flips that change a single bit is the $n$-dimensional hypercube.
Moreover, the flip graph for permutations under adjacent transpositions discussed in the previous section is the Cayley graph of the symmetric group generated by adjacent transpositions.
Another heavily studied example is the flip graph on binary trees under tree rotations~\cite{MR928904,MR3197650}.

Clearly, computing a Gray code for a set of combinatorial objects amounts to traversing a Hamilton path in the corresponding flip graph.
In particular, Hamilton paths in the three aforementioned flip graphs can be computed by the binary reflected code, the Steinhaus-Johnson-Trotter algorithm, and by an algorithm due to Lucas, Roelants van Baronaigien, and Ruskey~\cite{MR1239499}, respectively.

It turns out that many flip graphs can be equipped with a poset structure and realized as polytopes, i.e., they are cover graphs of certain lattices, and 1-skeleta of certain high-dimensional polytopes.
For example, the $n$-dimensional hypercube is the cover graph of the Boolean lattice and the skeleton of the Cartesian product~$[0,1]^n$.
Similarly, the flip graph on permutations under adjacent transpositions is the cover graph of the weak order on the symmetric group, and the skeleton of the \emph{permutohedron}; see Figure~\ref{fig:p4sjt}.
Lastly, the flip graph on binary trees under rotations is the cover graph of the Tamari lattice and the skeleton of the \emph{associahedron}.

Generalizations of these lattices and polytopes and the associated combinatorial structures have been the subject of intensive research in algebraic and polyhedral combinatorics; see Figure~\ref{fig:results}.
The theory of \emph{generalized permutohedra}~\cite{MR2520477,MR2487491,aguiar_ardila_2017} and of \emph{lattice congruences} and their \emph{quotients}~\cite{MR3221544,MR3645056}, in particular, provides us with a rich framework that contains all previous three examples as very special cases of a much broader picture.
Specifically, in the next three sections we discuss the generalizations shown one level above the bottom in Figure~\ref{fig:results}, namely acyclic orientations, elimination trees and lattice quotients.

\subsection{From permutations to acyclic orientations}
\label{sec:acyclic}

The starting point of this work are Gray codes for acyclic orientations of a graph.
Given a simple graph~$G$, an \emph{acyclic orientation} of~$G$ is a digraph~$D$ obtained by orienting every edge of~$G$ in one of two ways so that $D$ does not contain any directed cycles.
The goal is to list all acyclic orientations of~$G$ in such a way that any two consecutive orientations differ by reorienting a single arc, which we refer to as an \emph{arc flip}.

\begin{figure}[h!]
\centering
\includegraphics[page=1]{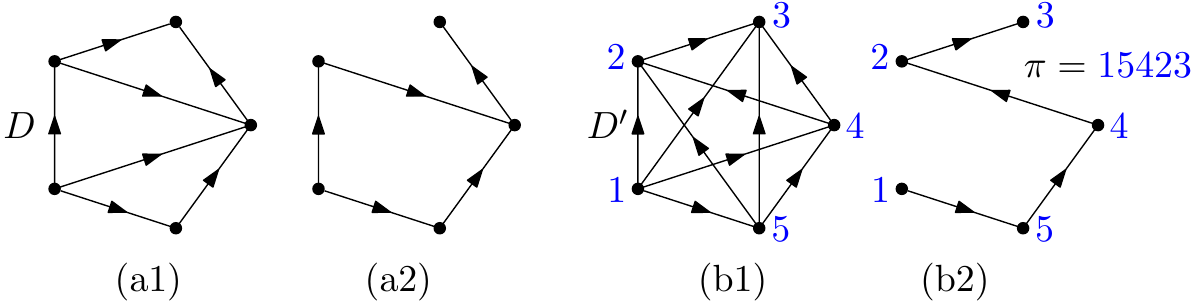}
\caption{(a1) An acyclic orientation~$D$ of a graph; (a2) the transitive reduction of~$D$, which contains precisely the flippable arcs in~$D$; (b1) an acyclic orientation~$D'$ of the complete graph; (b2) the transitive reduction of~$D'$ and the corresponding permutation.
}
\label{fig:acyclic}
\end{figure}

It is easy to see that in an acyclic orientation~$D$, the flippable arcs are precisely the arcs that are in the \emph{transitive reduction} of~$D$, which is the minimum subset of arcs that has the same reachability relations (i.e., the same transitive closure) as~$D$; see Figure~\ref{fig:acyclic}~(a1)+(a2).
If $G$ is the complete graph with vertex set~$[n]$, then the transitive reduction of any of its acyclic orientations~$D$ is a path, directed from the source to the sink of the orientation.
Consequently, we can interpret the vertex labels along this path as a permutation of~$[n]$; see Figure~\ref{fig:acyclic}~(b1)+(b2).
Furthermore, an arc flip corresponds to an adjacent transposition in this permutation.
Consequently, the flip graph on acyclic orientations of the complete graph is the skeleton of the permutohedron.
In general, the flip graph on the acyclic orientations of a graph~$G$ is the skeleton of a polytope known as the \emph{graphical zonotope} of~$G$~\cite{G77,MR712251,MR2383131}.

\begin{figure}[t!]
\centering
\includegraphics[page=2]{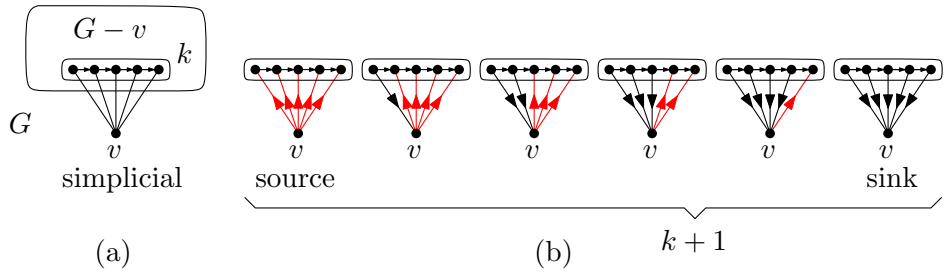}
\caption{Illustration of the Savage-Squire-West proof.
In the neighborhood of~$v$, only the transitive reduction is shown, whereas transitive arcs are omitted for simplicity.}
\label{fig:acyclic-flip}
\end{figure}

In general, not all (skeletons of) graphical zonotopes admit a Hamilton path or cycle, and we do not know of any simple conditions on the graph~$G$ for this to hold.
Clearly, the flip graph on acyclic orientations is bipartite for any graph~$G$, and if the partition classes have sizes that differ by more than~1, then this rules out the existence of a Hamilton path, a phenomenon that occurs for example if $G$ is a wheel graph with an even number of spokes~\cite{MR1267311}.
In this context, let us mention that counting the number of acyclic orientations of a graph is {\#}P-complete~\cite{MR830652}.

On the positive side, Savage, Squire and West~\cite{MR1267311} showed that the flip graph on acyclic orientations of~$G$ has a Hamilton cycle if~$G$ is chordal.
Their proof is a straightforward generalization of the Steinhaus-Johnson-Trotter construction, so we describe it here, with the goal of generalizing it even further subsequently.
A graph is \emph{chordal} if every induced cycle has length~3.
It is well-known that every chordal graph~$G$ has a \emph{simplicial} vertex~$v$, i.e., a vertex whose neighborhood in the graph is a clique.
We remove~$v$ from the graph, and by induction we obtain a Gray code for the acyclic orientations of~$G-v$; see Figure~\ref{fig:acyclic-flip}~(a).
Let $k$ be the number of neighbors of~$v$ in~$G$.
To construct the listing of acyclic orientations of~$G$, we replace every acyclic orientation in the listing for~$G-v$ by the sequence of $k+1$ acyclic orientations obtained by adding~$v$ and orienting the edges incident with~$v$ in all possible ways (that yield an acyclic orientation).
Specifically, since the neighborhood of~$v$ is a clique, whose transitive reduction is a path, there are precisely $k+1$ valid acyclic orientations obtained by adding~$v$, and they differ in a sequence of arc flips of arcs incident with~$v$, and this sequence starts and ends with~$v$ being a sink or a source; see Figure~\ref{fig:acyclic-flip}~(b).
In the Gray code for the acyclic orientations of~$G$, the vertex~$v$ alternates or `zigzags' between being sink or source.
As the number of acyclic orientations of any graph~$G$ with at least one edge is even (consider the involution on the set of all acyclic orientations that reorients every arc), the resulting ordering is cyclic, i.e., the last and first acyclic orientation differ only in an arc flip.
For $G$ being a complete graph, the resulting ordering of acyclic orientations and their corresponding permutations is exactly the Steinhaus-Johnson-Trotter ordering.

\subsection{From permutations to elimination trees}
\label{sec:elim}

\begin{figure}[b!]
\centering
\includegraphics[page=1]{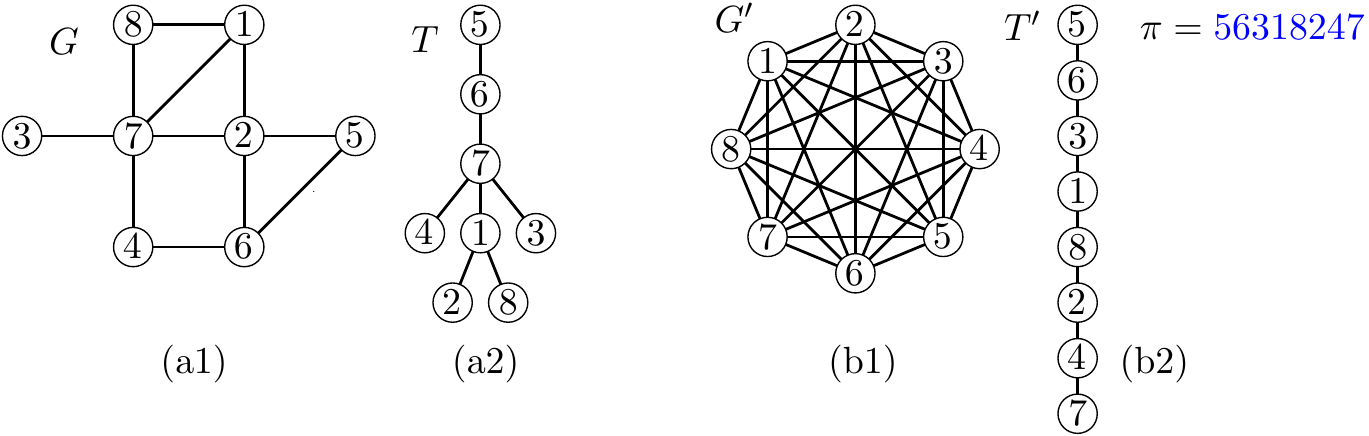}
\caption{(a1) A graph~$G$; (a2) an elimination tree~$T$ of~$G$; (b1) the complete graph; (b2) an elimination tree of the complete graph and the corresponding permutation.
}
\label{fig:elim}
\end{figure}

An elimination tree~$T$ of a connected graph~$G$ is an unordered rooted tree obtained as follows: We remove a vertex~$v$ of~$G$ which becomes the root of~$T$, and we recurse on the connected components of~$G-v$, whose elimination trees become the subtrees of~$v$ in~$T$; see Figure~\ref{fig:elim}~(a1)+(a2).
The goal is to list all elimination trees of~$G$ in such a way that any two consecutive trees differ by a \emph{rotation}, which is the result of swapping the removal order of two vertices that form a parent-child relationship in the elimination tree; see Figure~\ref{fig:hyper2}~(a)+(b).

Clearly, every elimination tree of a complete graph with vertex set~$[n]$ is a path, which can be interpreted as a permutation of~$[n]$ by reading the labels of the path from the root to the leaf; see Figure~\ref{fig:elim}~(b1)+(b2).
Furthermore, a tree rotation corresponds to an adjacent transposition in this permutation.
Consequently, the flip graph on elimination trees of the complete graph is the skeleton of the permutohedron.
In general, the flip graph on elimination trees of a graph~$G$ is the skeleton of a polytope known as the \emph{graph associahedron} of~$G$~\cite{MR2239078,MR2479448,MR2487491}.

\tikzset{
    double color fill/.code 2 args={
        \pgfdeclareverticalshading[%
            tikz@axis@top,tikz@axis@middle,tikz@axis@bottom%
        ]{diagonalfill}{100bp}{%
            color(0bp)=(tikz@axis@bottom);
            color(43.9bp)=(tikz@axis@bottom);
            color(44bp)=(tikz@axis@middle);
            color(44.1bp)=(tikz@axis@top);
            color(100bp)=(tikz@axis@top)
        }
        \tikzset{shade, left color=#1, right color=#2, shading=diagonalfill}
    }
}

\begin{figure}
\centering
\makebox[0cm]{ 
\begin{tikzpicture}[node distance=0.7cm and 0.3cm, auto]
\scriptsize
\tikzset{
oldnode/.style={rectangle,rounded corners,draw=black, fill=yellow!20, very thick, inner sep=1mm, minimum size=3em, text centered},
newnode/.style={rectangle,rounded corners,draw=black, fill=red!40, very thick, inner sep=1mm, minimum size=3em, text centered},
specialnode/.style={rectangle,rounded corners,draw=black, double color fill={yellow!20}{red!40}, shading angle=0,very thick, inner sep=1mm, minimum size=3em, text centered},
myarrow/.style={-, >=latex', shorten >=1pt, thick},
}

\node[newnode] (AOhyper) {
\begin{tabular}{l}
\hfill$\rightarrow$ Section~\ref{sec:res1} \\
C: \textbf{Acyclic orientations of hypergraphs} \\
\hspace{4mm}\textbf{in hyperfect elimination order} \\ \hline
P: Hypergraphic polytopes \\ \hline
H: New result: Theorem~\ref{thm:mainhyper} \\\hline
A: time $\cO(\Delta n)$, space $\cO(\Delta n^2)$ \\
\hspace{4mm}$\Delta=\Delta(\cH)$ max.\ degree, $n=|V|$ \\
\hspace{4mm}$\rightarrow$ Section~\ref{sec:algo}
\end{tabular}
};

\node[newnode, right=of AOhyper] (AOquotient) {
\begin{tabular}{l}
\hfill$\rightarrow$ Section~\ref{sec:res2} \\
L: \textbf{Quotients of acyclic reorientation} \\
\hspace{4mm}\textbf{lattices of peo-consistent digraphs} \\ \hline
P: Quotientopes \\ \hline
H: New result: Theorem~\ref{thm:mainquotient}
\end{tabular}
};

\node[newnode, below left=0.7cm and -3cm of AOhyper] (AObuilding) {
\begin{tabular}{l}
\hfill$\rightarrow$ Section~\ref{sec:res1} \\
C: \textbf{Acyclic orientations} \\
\hspace{4mm}\textbf{of chordal building sets} \\ \hline
P: Chordal nestohedra \\ \hline
H: New result: Theorem~\ref{thm:mainbuild}
\end{tabular}
};

\node[oldnode, below=of AObuilding] (ETgraph) {
\begin{tabular}{l}
\hfill$\rightarrow$ Section~\ref{sec:elim} \\
C: \textbf{Elimination trees} \\
\hspace{4mm}\textbf{of chordal graphs} \\ \hline
P: Chordal graph associahedra \\ \hline
H: \cite{MR3383157,DBLP:conf/soda/CardinalMM22} \\\hline
A: time $\cO(\sigma)$, space $\cO(n^2)$ \\
\hspace{4mm}$\sigma=\sigma(G)$ max.\ induced star, $n=|V|$
\end{tabular}
};

\node[specialnode, right=of ETgraph] (AOgraph) {
\begin{tabular}{l}
\hfill$\rightarrow$ Section~\ref{sec:acyclic} \\
C: \textbf{Acyclic orientations} \\
\hspace{4mm}\textbf{of chordal graphs} \\ \hline
P: Chordal graph zonotopes \\ \hline
H: \cite{MR1267311} \\\hline
A: time $\cO(\log \omega)$, space $\cO(n^2)$ \\
\hspace{4mm}$\omega=\omega(G)$ max.\ clique, $n=|V|$ \\
\hspace{4mm}$\rightarrow$ Section~\ref{sec:algo} and Theorem~\ref{thm:SSW-algo}
\end{tabular}
};

\node[oldnode, right=of AOgraph, yshift=-5.5mm] (WOquotient) {
\begin{tabular}{l}
\hfill$\rightarrow$ Section~\ref{sec:quotient} \\
L: \textbf{Quotients of} \\
\hspace{4mm}\textbf{the weak order} \\ \hline
P: Quotientopes \\ \hline
H: \cite{MR4344032}
\end{tabular}
};

\node[oldnode, below=1.55cm of ETgraph, xshift=-14mm] (Bin) {
\begin{tabular}{l}
\hfill$\rightarrow$ Section~\ref{sec:flip} \\
C: \textbf{Binary words} \\ \hline
L: Boolean lattice \\ \hline
P: Hypercube \\ \hline
H: Binary reflected code \\ \hline
A: time $\cO(1)$, space $\cO(n)$ \\
\hspace{4mm}$n=$ word length
\end{tabular}
};

\node[oldnode, below=1.4cm of AOgraph, xshift=-26.5mm, yshift=0mm] (WO) {
\begin{tabular}{l}
\hfill$\rightarrow$ Section~\ref{sec:flip} \\
C: \textbf{Permutations} \\ \hline
L: Weak order \\ \hline
P: Permutohedron \\ \hline
H: Steinhaus-Johnson-Trotter \\ \hline
A: time $\cO(1)$, space $\cO(n)$ \\
\hspace{4mm}$n=$ permutation length
\end{tabular}
};

\node[oldnode, below=1.4cm of WOquotient, xshift=-26.5mm] (BT) {
\begin{tabular}{l}
\hfill$\rightarrow$ Section~\ref{sec:flip} \\
C: \textbf{Binary trees} \\ \hline
L: Tamari lattice \\ \hline
P: Associahedron \\ \hline
H: \cite{MR1239499} \\ \hline
A: time $\cO(1)$, space $\cO(n)$ \\
\hspace{4mm}$n=$ number of nodes
\end{tabular}
};

\node[oldnode, right=of BT, xshift=-0.5mm] (Rect) {
\begin{tabular}{l}
\hfill$\rightarrow$ Section~\ref{sec:quotient} \\
C: \textbf{Diagonal / generic} \\
\hspace{4mm}\textbf{rectangulations} \\ \hline
L: $\mathrm{dRec}_n$ \cite{MR2871762} / $\mathrm{gRec}_n$ \cite{meehan_2019} \\ \hline
P: $P_\mathrm{dRec}$ \cite{MR2871762} / quotientope \\ \hline
H: \cite{perm_series_iii} \\ \hline
A: time $\cO(1)$, space $\cO(n)$ \\
\hspace{4mm}$n=$ number of rectangles
\end{tabular}
};

\draw[myarrow] (AOhyper.south) -- (AObuilding.north);
\draw[myarrow] (AOquotient.south) -- (AOgraph.north);
\draw[myarrow] (AObuilding.south) -- (ETgraph.north);
\draw[myarrow] (AOhyper.south) -- (AOgraph.north);
\draw[myarrow] (AOgraph.south) -- (WO.north);
\draw[myarrow] (ETgraph.south) -- (WO.north);
\draw[myarrow] (WOquotient.south) -- (WO.north);
\draw[myarrow] (ETgraph.south) -- (Bin.north);
\draw[myarrow] (Bin.north) -- (AOgraph.south);
\draw[myarrow] (Bin.north) -- (WOquotient.south);
\draw[myarrow] (AOquotient.south) -- (WOquotient.north);
\draw[myarrow] (BT.north) -- (ETgraph.south);
\draw[myarrow] (BT.north) -- (WOquotient.south);
\draw[myarrow] (Rect.north) -- (WOquotient.south);

\end{tikzpicture}
}
\medskip
\caption{Inclusion diagram of combinatorial families (C), lattices (L), polytopes (P), Hamiltonicity results (H), and corresponding algorithmic results (A).
More general objects are above their specialized counterparts.
New results are highlighted red.
Running times are per generated object, whereas the space refers to the total space needed (without storing previous objects).
Section references indicate where those results are discussed in more detail.
}
\label{fig:results}
\end{figure}
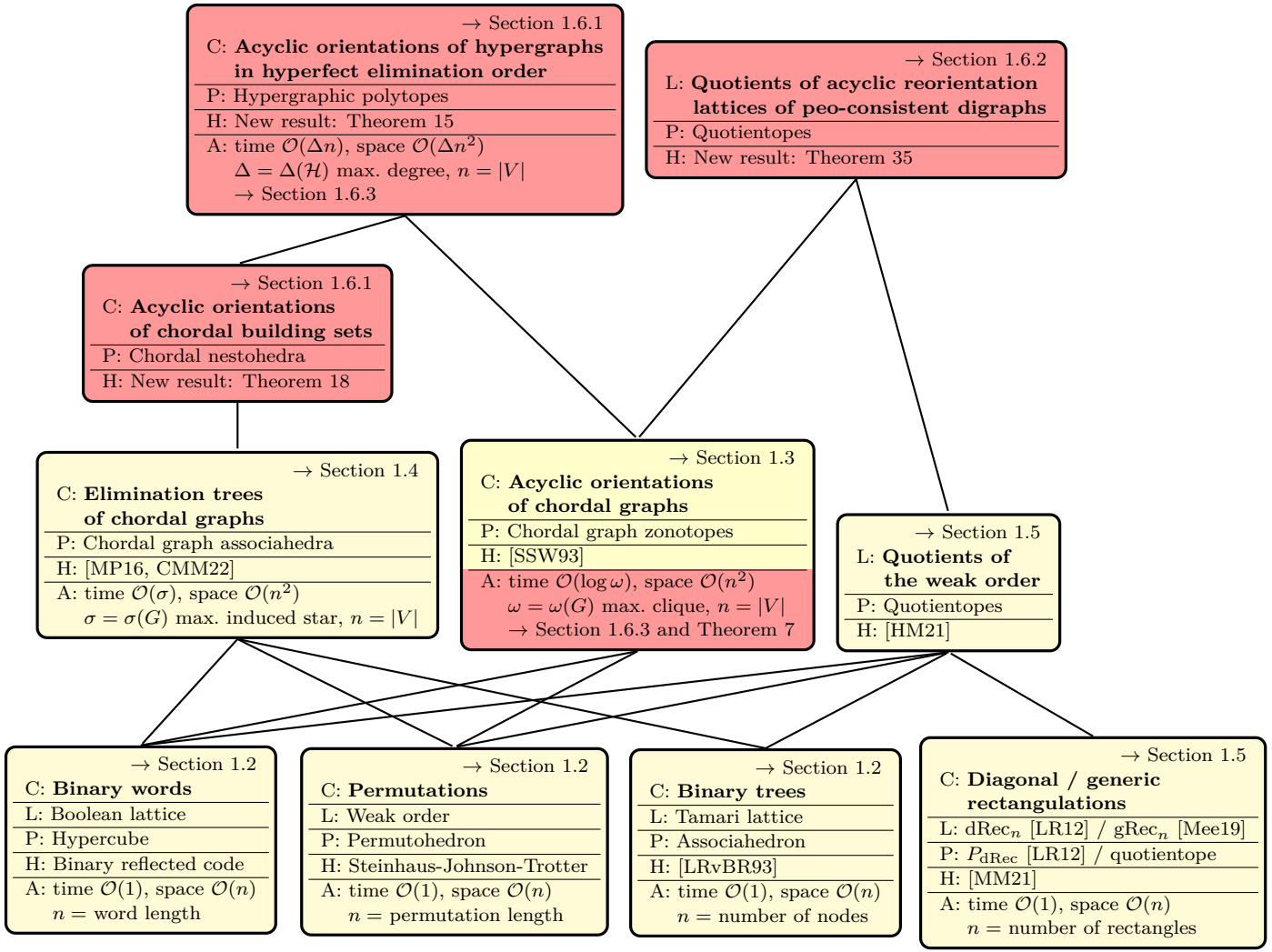

\todo{Replace references [CMM22], [Pil22] by journal references once available}

Manneville and Pilaud~\cite{MR3383157} showed that the skeleton of the graph associahedron of~$G$ admits a Hamilton cycle for any graph~$G$ with at least two edges.
In~\cite{DBLP:conf/soda/CardinalMM22}, we present a simple algorithm for computing a Hamilton path on the graph associahedron for the case when $G$ is a chordal graph.
This algorithm visits each elimination tree along the Hamilton path in time~$\cO(m+n)$, where $m$ and $n$ are the number of edges and vertices of~$G$, and this time can be improved to~$\cO(1)$ if $G$ is a tree.
Furthermore, if $G$ is chordal and 2-connected, then the resulting Hamilton path is actually a Hamilton cycle, i.e., the first and last elimination tree differ only in a tree rotation.
The proof in~\cite{DBLP:conf/soda/CardinalMM22} is an application of the Hartung-Hoang-M\"utze-Williams generation framework~\cite{MR4391718}, which generalizes the Steinhaus-Johnson-Trotter algorithm.
Specifically, we consider a simplicial vertex~$v$ in~$G$, we remove~$v$ from the graph, and by induction we obtain a rotation Gray code for the elimination trees of~$G-v$; see Figure~\ref{fig:elim-flip}~(a).
Let $N(v)$ be the set of neighbors of~$v$ in~$G$.
To construct the listing of elimination trees of~$G$, we consider every elimination tree~$T$ in the listing for~$G-v$.
As the vertices in~$N(v)$ form a clique in~$G$, these vertices appear on a path~$P$ in~$T$ that starts at the root and ends at a vertex from~$N(v)$.
We replace the elimination tree~$T$ for~$G-v$ by the sequence of elimination trees of~$G$ obtained by inserting~$v$ in all possibly ways on the path~$P$; see Figure~\ref{fig:elim-flip}~(b).
In particular, if $P$ has $k$ vertices, then we obtain $k+1$ elimination trees.
This insertion is done alternatingly from leaf to root or root to leaf, i.e., in the resulting listing of elimination trees of~$G$, the vertex~$v$ alternates or `zigzags' between being root or leaf.
In particular, for $G$ being a complete graph, the resulting ordering of elimination trees and their corresponding permutations is exactly the Steinhaus-Johnson-Trotter ordering.

\begin{figure}[h!]
\centering
\includegraphics[page=2]{elim}
\caption{Illustration of the zigzag argument for elimination trees of a chordal graph.
The vertices in the neighborhood~$N(v)$ of~$v$ are shaded, whereas other vertices of~$G$ are white.
The sloped edges in~$T$ connect to further subtrees (not shown).}
\label{fig:elim-flip}
\end{figure}

We will see that the appearance of chordal graphs in the aforementioned results on acyclic orientations and elimination trees is \textit{not a coincidence}.
In fact, our first main result gives a unified proof for both the results of~\cite{MR1267311} and~\cite{DBLP:conf/soda/CardinalMM22} by introducing a suitable notion of chordality for hypergraphs (see Section~\ref{sec:res1}).

\subsection{From permutations to lattice quotients}
\label{sec:quotient}

The \emph{inversion set} of a permutation~$\pi=a_1\cdots a_n$ is the set of pairs~$(a_i,a_j)$ that appear in the `wrong' order, i.e., the set $\{(a_i,a_j)\mid 1\leq i<j\leq n \text{ and } a_i>a_j\}$.
If we order all permutations of~$[n]$ by containment of their inversion sets, we obtain the \emph{weak order} on permutations; see Figure~\ref{fig:cong1}~(a).
The weak order forms a \emph{lattice}, i.e., joins and meets are well-defined.
Note that the cover relations in this lattice are precisely adjacent transpositions, i.e., the cover graph of this lattice is the skeleton of the permutohedron.
Furthermore, the levels $0,\ldots,\binom{n}{2}$ correspond to the number of inversions.

\begin{figure}[h!]
\centering
\includegraphics[page=1]{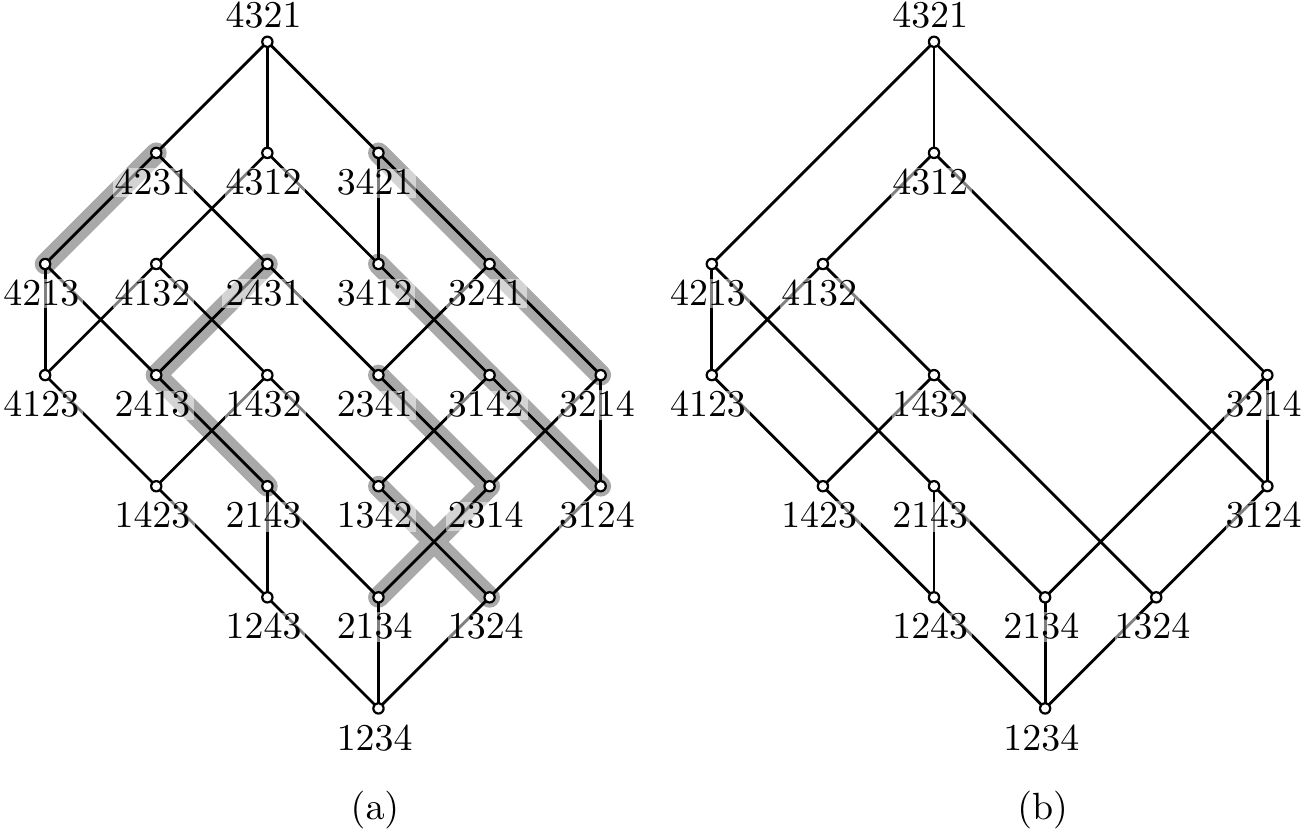}
\caption{(a) Weak order on permutations with the sylvester congruence, with the equivalence classes drawn as gray bubbles; (b) its quotient is the Tamari lattice of 231-avoiding permutations.}
\label{fig:cong1}
\end{figure}

A \emph{lattice congruence} is an equivalence relation on a lattice that respects joins and meets, i.e., for any choice of representatives from two equivalence classes, their joins and meets must lie in the same equivalence class.
A well-known example of a lattice congruence for the weak order is the \emph{sylvester congruence}, defined as the transitive closure of the rewriting rule $\_b\_ca\_ \equiv \_b\_ac\_$, where $a<b<c$, i.e., whenever a permutation contains three symbols $a<b<c$ in the order $b,c,a$, with $c$ and~$a$ directly next to each other, then this permutation belongs to the same equivalence class as the permutation obtained by transposing~$c$ and $a$.
Figure~\ref{fig:cong1}~(a) shows the equivalence classes of this congruence, with 231-avoiding permutations as the minima of the equivalence classes.
The \emph{quotient} of some lattice congruence is the lattice obtained by `contracting' each equivalence class to a single element; see Figure~\ref{fig:cong1}~(b).
In this way, we obtain for example the Tamari lattice (Figure~\ref{fig:cong1}~(b)) and the Boolean lattice as quotients of suitable lattice congruences of the weak order on permutations.
Let us also mention that the lattice of diagonal rectangulations~\cite{MR2871762} and the lattice of generic rectangulations~\cite{meehan_2019} arise as quotients of the weak order, and they have twisted Baxter permutations or 2-clumped permutations, respectively, as the minima of the equivalence classes.

The cover graphs of these quotient lattices are skeleta of polytopes known as \emph{quotientopes}~\cite{MR3964495,MR4311892}.
We showed in~\cite{MR4344032} that the skeleton of any quotientope admits a Hamilton path, and this Hamilton path can be computed by a `zigzag' strategy that generalizes the Steinhaus-Johnson-Trotter algorithm.

Pilaud~\cite{pilaud_2022} generalized this notion of quotientopes as follows:
He equipped the flip graph on acyclic orientations of a graph~$G$ with a poset structure.
Specifically, he considers the containment order of the sets of reoriented arcs with respect to some acyclic reference orientation~$D$ of~$G$.
The cover relations are given by reorienting a single arc, and the levels of this poset correspond to the number of reoriented arcs; see Figure~\ref{fig:cong2}~(a).
Pilaud characterized under which conditions on~$D$ this poset is a lattice, and he introduced lattice congruences and lattice quotients in this setting; see Figure~\ref{fig:cong2}~(b)+(c).
Furthermore, he showed how to realize the cover graphs of those quotients as polytopes, generalizing the constructions from~\cite{MR3964495,MR4311892}.
We saw before that if $G$ is a complete graph, then its acyclic orientations correspond to permutations, and arc flips correspond to adjacent transpositions, so in this special case Pilaud's lattice is precisely the weak order on permutations.
In his paper, Pilaud raised the problem which of these generalized quotientopes (parametrized by a reference orientation~$D$ of some graph~$G$) admit a Hamilton cycle.
The second main result of our work addresses Pilaud's question, by showing that they all have a Hamilton path, which can be computed by a simple greedy algorithm, again following the `zigzag' principle (see Section~\ref{sec:res2}).

\begin{figure}[h!]
\centering
\makebox[0cm]{ 
\includegraphics[page=2]{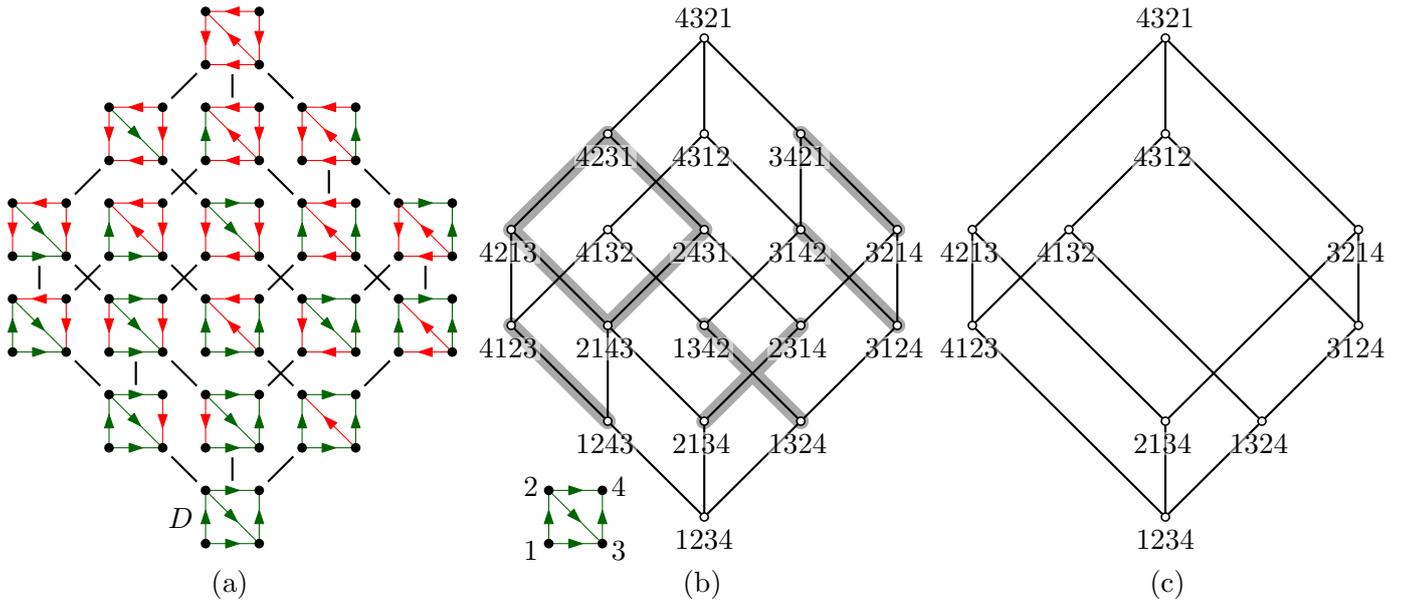}
}
\caption{(a) Lattice of acyclic reorientations of a digraph~$D$ (reoriented arcs w.r.t.~$D$ are highlighted); (b) one of its lattice congruences, and encoding of acyclic orientations by permutations; (c) the resulting quotient lattice, and corresponding representative permutations.}
\label{fig:cong2}
\end{figure}

\subsection{Our results}

We proceed to give an overview of the results of this paper, explaining the main statements and connections to previous work.
These new results are highlighted red in Figure~\ref{fig:results}.
A more formal treatment including proofs is provided in later sections.

\subsubsection{Acyclic orientations of hypergraphs}
\label{sec:res1}

Our first main contribution is the generalization of the Savage-Squire-West Gray code for acyclic orientations of chordal graphs to acyclic orientations of hypergraphs, by introducing a suitable notion of chordality for hypergraphs (see Theorem~\ref{thm:mainhyper} in Section~\ref{sec:aoh}).
Our construction yields Hamilton paths on the skeleta of certain \emph{hypergraphic polytopes}~\cite{MR3960512,aguiar_ardila_2017}, and in particular on chordal nestohedra~\cite{MR2520477} (Theorem~\ref{thm:mainbuild}).
Furthermore, this generalization subsumes the construction of Gray codes for elimination trees of chordal graphs presented in~\cite{DBLP:conf/soda/CardinalMM22} (Lemma~\ref{lem:BG-elim}).

Given a hypergraph $\cH=(V,\cE)$, where $\cE\seq 2^V$, an \emph{orientation} is a mapping $h:\cE\rightarrow V$ such that $h(A)\in A$ for every hyperedge~$A$ of~$\cH$; see Figure~\ref{fig:hyper}~(a).
The letter $h$ stands for `head': Every hyperedge designates one of its vertices as head.
This orientation is \emph{acyclic} if the digraph formed by all arcs $i\rightarrow j$ for every pair of distinct vertices $i,j\in V$ with $i,j\in A$ and $j=h(A)$ for some hyperedge~$A\in\cE$ is acyclic; see Figure~\ref{fig:hyper}~(b).
This definition clearly generalizes the notion of an acyclic digraph.
It is a special case of a more general definition recently used in a similar context by Benedetti, Bergeron, and Machacek~\cite{MR3960512}.

\begin{figure}[h!]
\centering
\includegraphics[page=1]{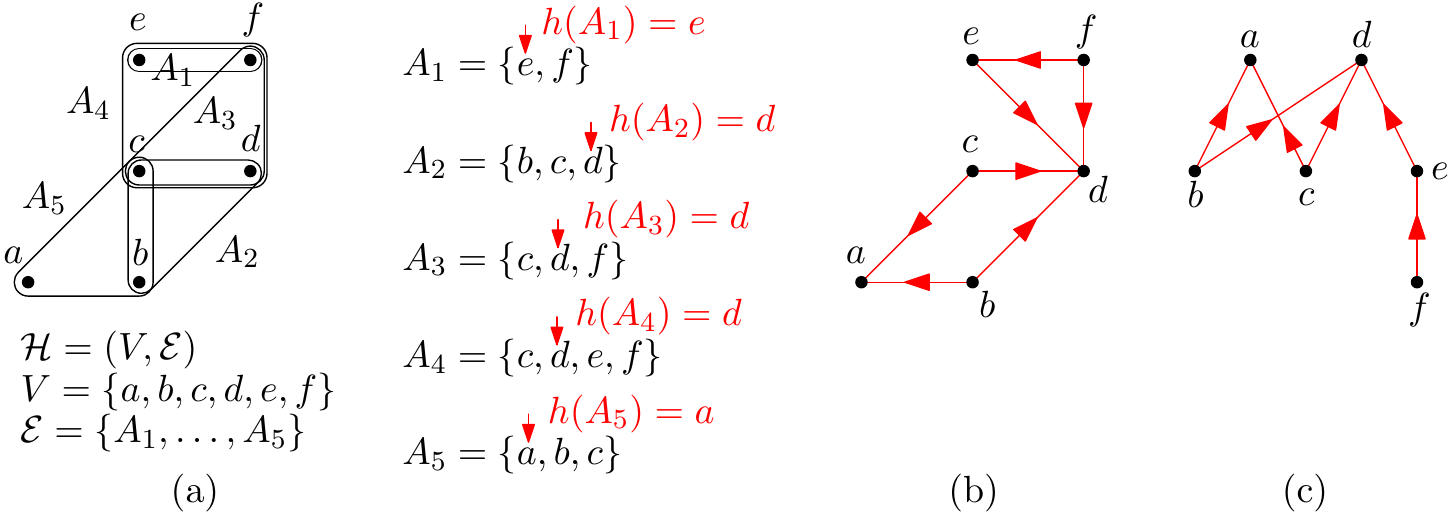}
\caption{(a) Acyclic orientation of a hypergraph; (b) corresponding acyclic digraph; (c) corresponding poset (whose cover graph is the transitive reduction of~(b)).}
\label{fig:hyper}
\end{figure}

Given a chordal graph~$G$, repeatedly removing one of its simplicial vertices yields a \emph{perfect elimination ordering (peo)} of the graph.
In fact, it is well-known that a graph~$G$ admits a perfect elimination ordering if and only if $G$ is chordal~\cite{MR186421}.
We generalize the notion of perfect elimination order of chordal graphs to what we call \emph{hyperfect elimination order} of hypergraphs (the formal definition is in Section~\ref{sec:hyperfect} below).
We then apply the aforementioned Hartung-Hoang-M\"utze-Williams generation framework to obtain a Gray code for the acyclic orientations of a hypergraph in hyperfect elimination order, using the `zigzag' idea common to Figures~\ref{fig:acyclic-flip} and~\ref{fig:elim-flip}.
The flip operation in an orientation~$h$ of a hypergraph $\cH=(V,\cE)$ consists of picking two vertices $i,j\in V$, and for all hyperedges $A\in\cE$ with $i,j\in A$ and $h(A)=j$ we instead define $h(A):=i$ (provided that the resulting orientation is acyclic); see Figure~\ref{fig:hyper2}~(c).

\begin{figure}[b!]
\centering
\includegraphics[page=2]{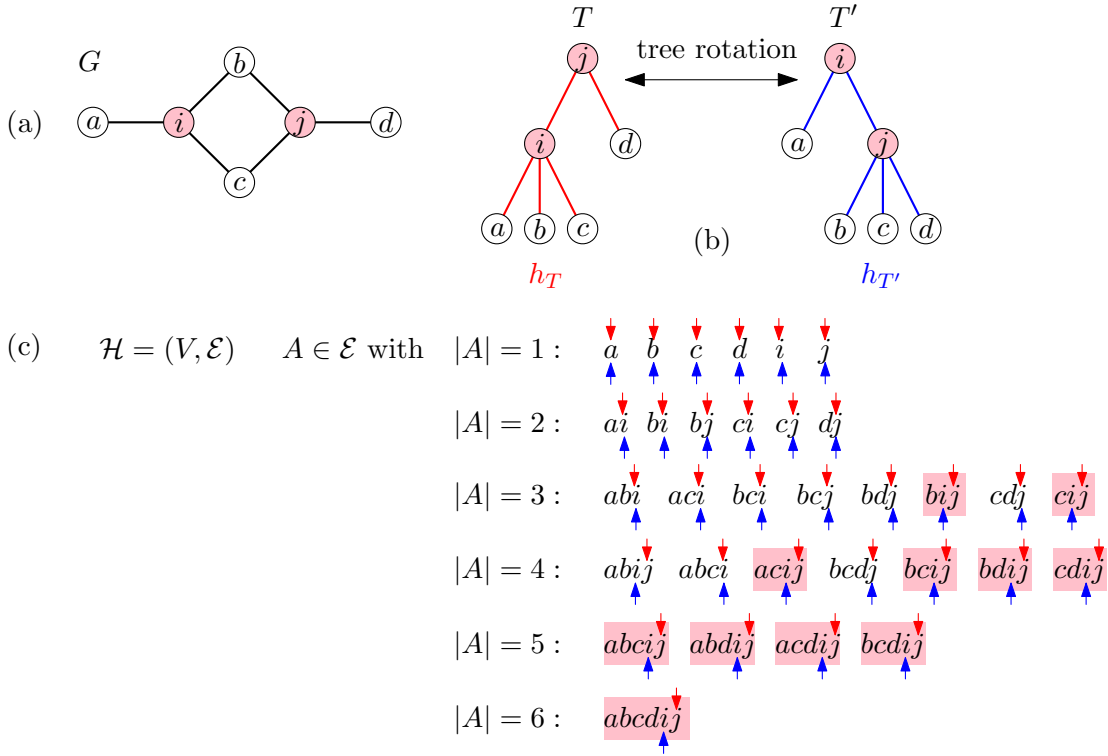}
\caption{(a) A graph~$G$; (b) two elimination trees~$T$ and~$T'$ of~$G$ that differ in a tree rotation; (c) corresponding two acyclic orientations~$h_T$ and~$h_{T'}$ of the graphical building set of~$G$, and flip operation between them.
The orientations $h_T$ and~$h_{T'}$ differ only on the highlighted hyperedges that contain both~$i$ and~$j$.
}
\label{fig:hyper2}
\end{figure}

The results from~\cite{DBLP:conf/soda/CardinalMM22} on elimination trees of chordal graphs can be recovered as a special case of our new results as follows; see Figure~\ref{fig:hyper2}:
Given a graph~$G=(V,E)$, its \emph{graphical building set} is the hypergraph $\cH=(V,\cE)$ such that
\begin{equation*}
\cE:=\{U\seq V\mid \text{$G[U]$ is connected}\},
\end{equation*}
where $G[U]$ is the subgraph of~$G$ induced by~$U$, i.e., we consider all connected subgraphs of~$G$ as hyperedges.
An elimination tree~$T$ of~$G$ with root~$v$ corresponds to the acyclic orientation of~$\cH$ in which every hyperedge~$A\in\cE$ with $v\in A$ satisfies $h(A)=v$, and this condition holds recursively for all subtrees.
Consequently, acyclic orientations of~$\cH$ are in one-to-one correspondence with elimination trees of~$G$, and the aforementioned flip operation on the hypergraph corresponds to a rotation in the elimination tree.
Our new Gray code on acyclic orientations of hypergraphs in hyperfect elimination order thus yields as a special case the Gray code on elimination trees of chordal graphs presented in~\cite{DBLP:conf/soda/CardinalMM22}.

The notions of building set and chordal building set have been defined abstractly without reference to a graph by Postnikov~\cite{MR2487491}, and Postnikov, Reiner, and Williams~\cite{MR2520477}, and the corresponding flip graphs arise as skeleta of \emph{chordal nestohedra}, a term also coined in \cite{MR2520477}.
Connections with acyclic orientations of hypergraphs have been investigated by Benedetti, Bergeron, and Machacek~\cite{MR3960512}.
We thus also obtain a simple constructive proof that chordal nestohedra admit a Hamilton path, directly yielding Gray codes for so-called \emph{$\cB$-forests}~\cite{MR2520477}.

\subsubsection{Quotients of acyclic reorientation lattices}
\label{sec:res2}

Recall from Section~\ref{sec:quotient} the definition of the poset of acyclic orientations of a graph~$G$ with respect to a reference orientation~$D$ of~$G$.
Pilaud~\cite{pilaud_2022} characterized when this poset is a lattice.
The following definitions are illustrated in Figure~\ref{fig:classes}.
Specifically, a digraph~$D$ is called \emph{vertebrate} if the transitive reduction of every induced subgraph of~$D$ is a forest.
It is easy to see that vertebrate implies acyclic.
Furthermore, $D$ is called \emph{filled} if for any directed path $v_1\rightarrow\cdots\rightarrow v_k$ in~$D$, if the arc~$v_1\rightarrow v_k$ belongs to~$D$, then all arcs $v_i\rightarrow v_j$, $1\leq i<j\leq k$, also belong to~$D$.
A digraph is called \emph{skeletal} if it is both vertebrate and filled.
Pilaud~\cite[Thm.~1+Thm.~3]{pilaud_2022} showed that the acyclic reorientation poset of~$D$ is a lattice if and only if $D$ is vertebrate, and that this lattice is semidistributive if and only if $D$ is filled.
He also raised the following question in his paper.

\begin{figure}[b!]
\centering
\includegraphics[page=3]{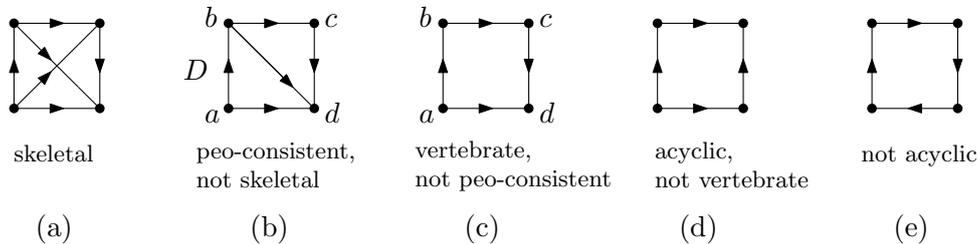}
\caption{Illustration of various classes of digraphs: (b) is peo-consistent, as the neighborhood of the source~$a$ is a clique, and $D-a$ is also peo-consistent; it is not skeletal, as the path $a\rightarrow b\rightarrow c\rightarrow d$ is not filled; (c) is vertebrate, as the transitive reduction is the path $a\rightarrow b\rightarrow c\rightarrow d$; it is not peo-consistent, as $a$ is a source and $d$ is a sink, but none of their neighborhoods is a clique; (d) is not vertebrate, as the transitive reduction is the full graph, which is not a forest; (e) is a directed cycle.}
\label{fig:classes}
\end{figure}

\begin{open}[{\cite[Problem~51]{pilaud_2022}}]
\label{prob:pilaud}
Given a skeletal (i.e., vertebrate and filled) digraph~$D$, do all cover graphs of lattice quotients of the acyclic reorientation lattice of~$D$ admit a Hamilton cycle?
\end{open}

Our second main contribution is to address Pilaud's question, by showing that those cover graphs all have a Hamilton path, which can be computed by a simple greedy algorithm, generalizing Steinhaus-Johnson-Trotter (Theorem~\ref{thm:mainquotient} in Section~\ref{sec:arl}).
This also yields an algorithmic proof that the corresponding generalized quotientopes admit a Hamilton path.
Furthermore, our result encompasses all earlier results on quotients of the weak order on permutations~\cite{MR4344032}, which are obtained as special case when $D$ is an acyclic orientation of a complete graph.

In fact, our results hold not only for skeletal digraphs~$D$, but for a slightly larger class.
Specifically, a \emph{peo-consistent} digraph $D$ has a source or sink~$v$ (i.e., all arcs incident with~$v$ are either outgoing or incoming, respectively) whose neighborhood is a clique, and $D-v$ is also peo-consistent or empty.
A straightforward induction shows that peo-consistent implies vertebrate.
The key fact we establish in our paper is that skeletal implies peo-consistent (Lemma~\ref{lem:skeletal}).
We thus have the following inclusions among classes of digraphs, which are strict (see Figure~\ref{fig:classes}):
\begin{equation}
\label{eq:inclusion}
\text{skeletal} \subset \text{peo-consistent} \subset \text{vertebrate} \subset \text{acyclic}.
\end{equation}

It is not difficult to see that an undirected graph has a peo-consistent orientation if and only if it is chordal.
We also observe that an undirected graph admits a skeletal orientation only if it is \emph{strongly chordal}~\cite{MR685625}.

We emphasize that our aforementioned results only guarantee a Hamilton path, whereas Pilaud's question asks for a Hamilton cycle.
However, our results hold for a slightly larger class of graphs, and they yield a simple algorithm.
As mentioned before, in the special case of elimination trees of chordal graphs~$G$, there are interesting cases where the Hamilton path computed by our algorithm is actually a Hamilton cycle, i.e., the first and last elimination tree differ only in a tree rotation.
Specifically, this happens if $G$ is chordal and 2-connected; see~\cite{DBLP:conf/soda/CardinalMM22}.
In particular, when $G$ is a complete graph our algorithm specializes to the Steinhaus-Johnson-Trotter algorithm for permutations, which produces a cyclic Gray code.

\subsubsection{Efficient generation algorithms}
\label{sec:algo}

We briefly discuss the computational efficiency of the Gray code algorithms derived from our work.
Those are summarized in the bottom part of the boxes in Figure~\ref{fig:results}.
All four algorithms mentioned at the bottom level in Figure~\ref{fig:results} can be implemented to output each new object in time~$\cO(1)$.

Our first algorithmic contribution is to turn the Savage-Squire-West Gray code for acyclic orientations of a chordal graph~$G$ into an algorithm that generates each acyclic orientation in time~$\cO(\log \omega)$ on average, where $\omega=\omega(G)$ is the clique number of~$G$ (Theorem~\ref{thm:SSW-algo} in Section~\ref{sec:aog}).
Clearly, we have $\omega\leq n$, which yields the more generous bound~$\cO(\log n)$.
The initialization time of our algorithm is~$\cO(n^2)$, which includes the time for testing chordality and computing a perfect elimination ordering, and the required space is~$\cO(n^2)$.
In our algorithm, we represent each acyclic orientation as an adjacency matrix, which allows constant-time orientation queries for each arc.
In the following we write $m$ and~$n$ for the number of edges and vertices of~$G$, respectively.
For comparison, Barbosa and Szwarcfiter~\cite{MR1733453} described an algorithm to generate each acyclic orientation of an arbitrary graph in time~$\cO(m+n)$ on average.
Moreover, Conte, Grossi, Marino, and Rizzi~\cite{MR3815526} provided an algorithm that generates each acyclic orientation in time~$\cO(m)$, and this bound holds in every iteration.
Their approach generalizes to the setting where some vertices are prescribed as sources, at the cost of a higher running time.
However, none of these other algorithms produces a Gray code listing, i.e., they do not yield Hamilton paths on the corresponding graphical zonotopes, unlike our Gray code.

Generalizing this algorithm, we can implement our Gray code for generating all acyclic orientations of a hypergraph~$\cH=(V,\cE)$ in hyperfect elimination order in time~$\cO(\Delta n)$ per generated acyclic orientation, where $\Delta=\Delta(\cH):=\max_{v\in V}|\{A\in \cE\mid v\in A\}|$ denotes the maximum degree and $n=|V|$ is the number of vertices.
The space required by this algorithm is~$\cO(\Delta n^2)$, and the initialization time is also~$\cO(\Delta n^2)$.
Testing whether a hypergraph admits a hyperfect elimination order and computing one takes time~$O(\Delta^3 n^5)$.

We implemented both of the aforementioned algorithms in C++, and we made this code available for download and experimentation on the Combinatorial Object Server~\cite{cos_orient}.
For algorithmic details on graphs, see Section~\ref{sec:SSW-algo}, and for details on hypergraphs, see our C++ implementation.

For lattice congruences and their quotients, it is difficult to provide meaningful statements about running times because of representation issues.
Specifically, for the weak order on permutations of~$[n]$, there are double-exponentially in~$n$ many different lattice congruences~\cite[Thm.~18]{MR4344032}.
Specifying the congruence as input of an algorithm therefore takes exponential space in general.
Consequently, one cannot improve much upon a naive specification of the congruence as a full list of equivalence classes as input of the algorithm.
However, given such a specification as input, there is no value in generating it efficiently.

\subsection{The permutation language framework}
\label{sec:framework}

In a recent line of work, Hartung, Hoang, M\"utze, and Williams~\cite{MR4391718} introduced a far-ranging generalization of the Steinhaus-Johnson-Trotter algorithm, which yields efficient Gray code algorithms for a large variety of combinatorial objects, based on encoding them as permutations.
So far, the framework has been applied successfully to obtain Gray codes for pattern-avoiding permutations~\cite{MR4391718}, lattice congruences of the weak order on permutations~\cite{MR4344032}, different families of rectangulations~\cite{perm_series_iii} (a rectangulation is a subdivision of a rectangle into smaller rectangles), and elimination trees of chordal graphs~\cite{DBLP:conf/soda/CardinalMM22}.
The methods described in this paper extend the reach of this framework, and make it applicable to generate even more general classes of objects, namely acyclic orientations of hypergraphs, which in particular subsumes the earlier results in~\cite{MR4344032} and~\cite{DBLP:conf/soda/CardinalMM22}.
In the following we summarize the key methods and results provided by this generation framework.

\subsubsection{Jumps in permutations}

We use $S_n$ to denote the set of all permutations of~$[n]$.
Furthermore, we use $\ide_n=12\cdots n$ to denote the identity permutation, and $\varepsilon\in S_0$ to denote the empty permutation.
A permutation~$\pi=a_1\cdots a_n$ is \emph{peak-free} if it does not contain any triple~$a_{i-1}<a_i>a_{i+1}$.
For any $\pi\in S_{n-1}$ and any $1\leq i\leq n$, we write $c_i(\pi)\in S_n$ for the permutation obtained from~$\pi$ by inserting the new largest value~$n$ at position~$i$ of~$\pi$, i.e., if $\pi=a_1\cdots a_{n-1}$ then $c_i(\pi)=a_1\cdots a_{i-1} \, n\, a_i \cdots a_{n-1}$.
Moreover, for~$\pi\in S_n$, we write $p(\pi)\in S_{n-1}$ for the permutation obtained from~$\pi$ by removing the largest entry~$n$.
Given a permutation $\pi=a_1\cdots a_n$ with a substring $a_i\cdots a_{i+d}$ with $d>0$ and $a_i>a_{i+1},\ldots,a_{i+d}$, a \emph{right jump of the value~$a_i$ by $d$~steps} is a cyclic left rotation of this substring by one position to $a_{i+1}\cdots a_{i+d} a_i$.
Similarly, given a substring $a_{i-d}\cdots a_i$ with $d>0$ and $a_i>a_{i-d},\ldots,a_{i-1}$, a \emph{left jump of the value~$a_i$ by $d$~steps} is a cyclic right rotation of this substring to $a_i a_{i-d}\cdots a_{i-1}$.
For example, a right jump of the value~5 in the permutation~$265134$ by 2 steps yields~$261354$.

\subsubsection{A simple greedy algorithm}
\label{sec:greedy}

The main ingredient of the framework is the following simple greedy algorithm to generate a set of permutations $L_n\seq S_n$.
We say that a jump is \emph{minimal} with respect to~$L_n$, if every jump of the same value in the same direction by fewer steps creates a permutation that is not in~$L_n$.

\begin{algo}{Algorithm~J}{Greedy minimal jumps}
This algorithm attempts to greedily generate a set of permutations $L_n\seq S_n$ using minimal jumps starting from an initial permutation $\pi_0 \in L_n$.
\begin{enumerate}[label={\bfseries J\arabic*.}, leftmargin=8mm, noitemsep, topsep=3pt plus 3pt]
\item{} [Initialize] Visit the initial permutation~$\pi_0$.
\item{} [Jump] Generate an unvisited permutation from~$L_n$ by performing a minimal jump of the largest possible value in the most recently visited permutation.
If no such jump exists, or the jump direction is ambiguous, then terminate.
Otherwise visit this permutation and repeat~J2.
\end{enumerate}
\end{algo}

Note that Algorithm~J is a generalization of Williams' greedy description of the Steinhaus-Johnson-Trotter algorithm given in Section~\ref{sec:sjt}.
Indeed, if $L_n=S_n$ is the set of all permutations of~$[n]$, then minimal jumps correspond to adjacent transpositions.

Note that by the definition of step~J2, Algorithm~J never visits any permutation twice.
The following key result provides a sufficient condition on the set~$L_n$ to guarantee that Algorithm~J succeeds to list all permutations from~$L_n$.
This condition is captured by the following closure property of the set~$L_n$.
A set of permutations~$L_n\seq S_n$ is called a \emph{zigzag language}, if either $n=0$ and $L_0=\{\varepsilon\}$, or if $n\geq 1$ and $L_{n-1}:=\{p(\pi)\mid \pi\in L_n\}$ is a zigzag language satisfying either one of the following conditions:
\begin{enumerate}[label={(z\arabic*)}, leftmargin=8mm, noitemsep, topsep=3pt plus 3pt]
\item For every $\pi\in L_{n-1}$ we have~$c_1(\pi)\in L_n$ and~$c_n(\pi)\in L_n$.
\item We have $L_n=\{c_n(\pi)\mid \pi\in L_{n-1}\}$.
\end{enumerate}

\begin{theorem}[\cite{MR4391718}]
\label{thm:jump}
Given any zigzag language of permutations~$L_n$ and initial permutation $\pi_0=\ide_n$, Algorithm~J visits every permutation from~$L_n$ exactly once.
\end{theorem}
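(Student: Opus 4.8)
The plan is to prove Theorem~\ref{thm:jump} by induction on~$n$, mirroring the inductive structure of the definition of a zigzag language. The base case $n=0$ is trivial: $L_0=\{\varepsilon\}$ and Algorithm~J visits~$\varepsilon$ and halts. For the inductive step, assume the claim holds for $L_{n-1}=\{p(\pi)\mid\pi\in L_n\}$, so Algorithm~J run on $L_{n-1}$ starting from~$\ide_{n-1}$ produces an ordering $\rho_1=\ide_{n-1},\rho_2,\ldots,\rho_N$ of all of~$L_{n-1}$, where consecutive $\rho_i$, $\rho_{i+1}$ differ by a minimal jump. The key structural idea is that the ordering of~$L_n$ produced by Algorithm~J is obtained from this ordering of~$L_{n-1}$ by the ``zigzag'' substitution: each $\rho_i$ is replaced by the block of all permutations $\pi\in L_n$ with $p(\pi)=\rho_i$, listed so that the value~$n$ sweeps monotonically from one end of~$\rho_i$ to the other, alternating the sweep direction from block to block, and the value~$n$ never moves while we are inside a block but the ``underlying'' jump on $[n-1]$ happens at the block boundaries.

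First I would make precise which permutations of~$L_n$ project to a given $\rho\in L_{n-1}$. In case~(z2) each block is a singleton $\{c_n(\rho)\}$, and Algorithm~J on~$L_n$ simply shadows Algorithm~J on~$L_{n-1}$: a minimal jump of a value $<n$ in $c_n(\rho)$ corresponds to a minimal jump in $\rho$ (the trailing~$n$ is inert and never blocks or enables a jump of a smaller value), and no jump of the value~$n$ is possible since $n$ is already at the last position and $L_n$ contains no other placement of~$n$. In case~(z1) I would need the lemma that for $\pi\in L_n$ with $p(\pi)=\rho$, the jump of the value~$n$ by one step (in whichever direction) always stays in~$L_n$ \emph{except} when $n$ is at position~$1$ or position~$n$ and the adjacent smaller value would have to be jumped over in a way that leaves $L_n$; more carefully, the reachable placements of~$n$ within a fixed $\rho$ form a contiguous interval of positions, and since $c_1(\rho),c_n(\rho)\in L_n$ by~(z1), this interval is \emph{all} of $\{1,\ldots,n\}$. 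Hence within a block, greedily jumping the value~$n$ (the largest value, so the algorithm prefers it) by the minimal number of steps — which is one step, since one-step jumps of~$n$ are always in~$L_n$ here — sweeps $n$ across all $n$ positions of~$\rho$, and the sweep only stops when $n$ reaches the far end and no further jump of~$n$ is possible.

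Next I would analyze the block boundary. When the value~$n$ has reached the end of its sweep in block~$\rho_i$ — say $n$ sits at position~$n$, i.e.\ the current permutation is $c_n(\rho_i)$ — no jump of~$n$ is available, so Algorithm~J looks for the largest value $<n$ admitting a minimal jump yielding an unvisited permutation in~$L_n$. I claim this jump exists, is unambiguous, and on the first $n-1$ coordinates it is exactly the minimal jump that Algorithm~J (run on~$L_{n-1}$) performs to go from~$\rho_i$ to~$\rho_{i+1}$: a value $v<n$ can jump in $c_n(\rho_i)$ iff it can jump in~$\rho_i$ (again $n$ at the last position is inert), any resulting permutation has its $p(\cdot)$ in~$L_{n-1}$ and is unvisited iff the corresponding permutation of~$L_{n-1}$ is unvisited, and minimality transfers. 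So the block boundary step takes us to $c_n(\rho_{i+1})=c_{j}(\rho_{i+1})$ with $j=n$, which is the correct starting permutation of the next block under the alternating convention (the previous block ended with~$n$ at the right end, so the new block sweeps~$n$ from right to left). By induction on the number of blocks, all of $L_n$ is visited, each exactly once (step~J2 never revisits), and the algorithm terminates only after the last block — precisely when the underlying run on~$L_{n-1}$ terminates and the last block's sweep is exhausted, at which point no minimal jump of any value yields an unvisited permutation.

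The main obstacle I anticipate is the careful bookkeeping at the block boundaries: one must verify that at the moment the value~$n$ finishes its sweep, the algorithm does \emph{not} prematurely perform some jump of a value $<n$ while $n$ is still mid-sweep (this is where ``largest possible value'' in J2 is essential — as long as a one-step jump of~$n$ is available, it is taken, and by~(z1) it is always available until $n$ hits the boundary), and conversely that once $n$ is parked at the boundary the intended jump of a smaller value is available, minimal, and direction-unambiguous. Handling case~(z1) versus~(z2) uniformly, and checking that the inertness of a trailing/leading~$n$ with respect to jumps of smaller values is genuinely valid (a jump of value~$v$ by $d$ steps requires a monotone substring, and inserting~$n$ at an extreme end cannot shorten or block such a substring), are the points that need the most care. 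Everything else is the routine transfer of the ``visited / unvisited / minimal'' predicates across the projection~$p$.
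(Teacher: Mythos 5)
Your overall plan is correct and takes essentially the same route as the inductive description of~$J(L_n)$ given in Section~\ref{sec:zigzag} (equations~\eqref{eq:JLn1} and~\eqref{eq:JLn2}), which is also how~\cite{MR4391718} proves the theorem: induct on~$n$, group the permutations of~$L_n$ into blocks indexed by~$L_{n-1}$, let the value~$n$ sweep within a block, and let a jump of a smaller value advance the underlying ordering of~$L_{n-1}$ at block boundaries.

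However, there is a concrete error in your treatment of the in-block sweep under~(z1). You claim that the set of positions~$i$ with~$c_i(\rho)\in L_n$ is contiguous, that~(z1) then forces this set to be all of~$\{1,\ldots,n\}$, and hence that one-step jumps of~$n$ always stay in~$L_n$. None of this follows: condition~(z1) guarantees only~$c_1(\rho),c_n(\rho)\in L_n$ and says nothing about the intermediate insertions, which need not form a contiguous set. For instance, with~$L_3$ the zigzag language of 231-avoiding permutations and~$\rho=21\in L_2$, we have~$c_1(21)=321\in L_3$ and~$c_3(21)=213\in L_3$ but~$c_2(21)=231\notin L_3$, so the valid positions are~$\{1,3\}$. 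What actually happens is that the minimal-jump rule makes~$n$ skip over the forbidden positions: within a block the algorithm visits exactly the permutations~$c_i(\rho)\in L_n$, in monotone order of~$i$, via minimal jumps that may move~$n$ by more than one step. This is precisely the content of the sequences~$\lvec{c}(\rho)$ and~$\rvec{c}(\rho)$ in Section~\ref{sec:zigzag}, which explicitly ``skip the positions that yield a permutation that is not in~$L_n$''. As written, your argument would fail for any zigzag language with a forbidden intermediate insertion (essentially any proper pattern-avoiding class). The fix is to replace ``one-step jump'' by ``minimal jump to the next valid position'' throughout the in-block analysis and observe that the sweep still terminates at an endpoint because~$c_1(\rho),c_n(\rho)\in L_n$ by~(z1). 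You should also actually supply the check, which you flag but defer, that the jump direction is never ambiguous, since step~J2 terminates on ambiguity.
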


It was already argued in~\cite{MR4391718} that more generally, any peak-free permutation can be used as initial permutation~$\pi_0$ for Algorithm~J.

We emphasize that Algorithm~J can be made \emph{history-free}, i.e., by introducing suitable auxiliary arrays, step~J2 can be performed without maintaining any previously visited permutations in order to decide which jump to perform; for details see \cite[Sec.~5.1+8.7]{perm_series_iii} and Section~\ref{sec:SSW-algo}.
The running time of this algorithm is then only determined by the time it takes to decide membership of a permutation in the zigzag language~$L_n$.
We should think of~$L_n$ as a set of permutations defined by some property, such as for example `permutations that avoid the pattern 231' or `permutations that encode acyclic orientations of some graph' (recall Figures~\ref{fig:cong1}~(b) and~\ref{fig:cong2}~(b), respectively), rather than an explicitly given set.
After all, if the set was already provided explicitly as input, then there would be no point in generating it; recall the discussion in Section~\ref{sec:algo}.

\subsubsection{Inductive description of the same ordering}
\label{sec:zigzag}

In the same way that the Steinhaus-Johnson-Trotter ordering can be defined both greedily or inductively, the ordering produced by Algorithm~J can also be defined inductively, as we show next.

Specifically, given a zigzag language~$L_n$, we write~$J(L_n)$ for the ordering of all permutations from~$L_n$ produced by Algorithm~J when initialized with~$\pi_0=\ide_n$.
For any $\pi\in L_{n-1}$ we let $\rvec{c}(\pi)$ be the sequence of all $c_i(\pi)\in L_n$ for $i=1,2,\ldots,n$, starting with $c_1(\pi)$ and ending with $c_n(\pi)$, and we let $\lvec{c}(\pi)$ denote the reverse sequence, i.e., it starts with $c_n(\pi)$ and ends with $c_1(\pi)$.
In words, those sequences are obtained by inserting into~$\pi$ the new largest value~$n$ from left to right, or from right to left, respectively, in all possible positions that yield a permutation from~$L_n$, skipping the positions that yield a permutation that is not in~$L_n$.
It was shown in~\cite{MR4391718} that the sequence~$J(L_n)$ can be described inductively as follows:
If $n=0$ then we have $J(L_0)=\varepsilon$, and if $n\geq 1$ then we consider the finite sequence $J(L_{n-1})=:\pi_1,\pi_2,\ldots$ and we have
\begin{subequations}
\label{eq:JLn12}
\begin{equation}
\label{eq:JLn1}
J(L_n)=\lvec{c}(\pi_1),\rvec{c}(\pi_2),\lvec{c}(\pi_3),\rvec{c}(\pi_4),\ldots
\end{equation}
if condition~(z1) holds, and
\begin{equation}
\label{eq:JLn2}
J(L_n)=c_n(\pi_1),c_n(\pi_2),c_n(\pi_3),c_n(\pi_4),\ldots
\end{equation}
\end{subequations}
if condition~(z2) holds.
In words, if condition~(z1) holds then this sequence is obtained from the previous sequence by inserting the new largest value~$n$ in all possible positions alternatingly from right to left, or from left to right, in a `zigzag' fashion.
The case where condition~(z2) holds is exceptional, as we only append~$n$ to each permutation of the previous sequence.

\subsubsection{How we apply Algorithm~J in this work}

To prove our results on acyclic orientations of hypergraphs discussed in Section~\ref{sec:res1}, we proceed as follows:
Given a hypergraph~$\cH$ in hyperfect elimination order, we label its vertices with $1,2,\ldots,n$ according to this ordering, and we encode any of its acyclic orientations as a permutation on~$[n]$, in such a way that the set of permutations obtained for all acyclic orientations of~$\cH$ is a zigzag language.
By Theorem~\ref{thm:jump} we can thus apply Algorithm~J to generate this zigzag language in Gray code order, and in a final step we interpret the jumps in permutations performed by Algorithm~J in terms of flip operations on acyclic orientations of~$\cH$.
The key insight that makes this work is that when vertices are in hyperfect elimination order, the posets defined by the acyclic orientations have the \emph{unique parent-child} property, namely that every vertex has at most one parent and one child in the poset (Lemma~\ref{lem:upc-heo}).

To prove our results on quotients of acyclic reorientations lattices discussed in Section~\ref{sec:res2}, we proceed as follows:
Given a peo-consistent digraph~$D$, we label its vertices with $1,\ldots,n$ according to this ordering, and we encode any of its acyclic orientations as a permutation on~$[n]$; see Figure~\ref{fig:cong2}~(b).
For a given lattice congruence of the reorientation lattice of~$D$, we select a set of representatives, one permutation from each equivalence class, such that those representative permutations form a zigzag language; see Figure~\ref{fig:cong2}~(c).
We show that for peo-consistent digraphs, the equivalence classes of any lattice congruence have a simple projection property that enables selecting representatives in an inductive `zigzag'-like way.
It then follows that the jumps in permutations performed by Algorithm~J correspond to steps along cover edges of the lattice quotient.

\section{Acyclic orientations of graphs}
\label{sec:aog}

In this section we review the structure of acyclic orientations of graphs and the associated combinatorial and geometric objects.
We also describe how the Savage-Squire-West Gray code for acyclic orientations of a chordal graph described in Section~\ref{sec:acyclic} can be cast as an instance of Algorithm~J, and we show how to implement this Gray code efficiently (Theorem~\ref{thm:SSW-algo} below).

\subsection{Poset preliminaries}
\label{sec:prelim}

We first recall some terminologies for a partially ordered set~$(P, <)$ that will be used throughout this paper.
A \emph{cover relation} is a pair~$x,y \in P$ with $x<y$ such that there is no~$z \in P$ with $x<z<y$.
In that case, we say that \emph{$y$ covers~$x$}, or \emph{$x$ is covered by~$y$}.
The \emph{cover graph} of $P$ has the elements of $P$ as vertices, and an edge between every pair of vertices that are in a cover relation.
A \emph{linear extension} of~$P$ is a total order that respects the comparabilities (or cover relations) of~$P$.
We write $\ext(P)$ for the set of linear extensions of~$P$.
An \emph{interval} $[x,y]$ of~$P$ is the set of all~$z$ in~$P$ such that $x<z<y$.

A poset~$(P,<)$ is a \emph{lattice}, if for every pair $x,y \in P$ there is a unique minimal element~$z$ such that~$z>x$ and~$z>y$, called the \emph{join~$x\vee y$ of~$x$ and~$y$}, and a unique maximal element~$z$ such that~$z<x$ and~$z<y$, called the \emph{meet~$x\wedge y$ of~$x$ and~$y$}.

\subsection{Flips in acyclic orientations of graphs}

We use standard terminology for digraphs~$D=(V,A)$, such as \emph{out-neighbor}, \emph{in-neighbor}, as well as \emph{out-degree} and \emph{in-degree} of a vertex~$v\in V$, denoted~$d^+(v)$ and~$d^-(v)$, respectively.
A \emph{source} is a vertex with zero in-degree, and a \emph{sink} is a vertex with zero out-degree.
The \emph{transitive reduction} $T_D$ of a digraph~$D$ is the digraph obtained by removing from~$D$ all arcs that can be obtained by applying the transitivity rule in~$D$; see Figure~\ref{fig:acyclic}.
When interpreting~$D$ as a poset, $T_D$ is the cover graph of~$D$.

An \emph{orientation}~$D$ of a graph~$G$ is a digraph obtained by orienting every edge of~$G$ in one of two ways.
An orientation of~$G$ is called \emph{acyclic} if it does not contain any directed cycles.
We write $\AO_G$ for the set of all acyclic orientations of the graph~$G$.

We refer to the operation of reversing the direction of a single arc of an orientation as an \emph{arc flip}.
We define a \emph{flip graph} on~$\AO_G$ by joining two acyclic orientations of~$G$ with an edge if and only if they differ in an arc flip.
Note that a transitive arc cannot be flipped, as this would create a directed cycle.
Conversely, if flipping an arc creates a directed cycle, then before the flip the arc was transitive.
Therefore, for every $D\in\AO_G$, an arc is flippable in~$D$ if and only if it belongs to the transitive reduction~$T_D$, and the degree of~$D$ in the flip graph is equal to the number of arcs in~$T_D$.

We often consider a total ordering of the vertex set~$V$ of a graph or digraph, and then we simply use $V=[n]=\{1,2,\ldots,n\}$.

\subsection{Graphical zonotopes}
\label{sec:zono}

The \emph{graphical arrangement} of $G=([n],E)$ is the collection of hyperplanes $\{H_{ij} \mid ij\in E\}$ in~$\mathbb{R}^n$, defined as~$H_{ij} := \{x\in\mathbb{R}^n \mid x_i = x_j\}$.
This arrangement defines the \emph{graphical fan} of $G$, the full-dimensional cones of which are in one-to-one correspondence with acyclic orientations of $G$.
The \emph{graphical zonotope~$Z(G)$} of~$G$ is the dual polytope of the graphical fan of $G$.
Original developments about these structures can be found in Greene~\cite{G77} and Greene and Zaslavsky~\cite{MR712251}; see also Stanley~\cite{MR2383131}.

The graphical zonotope~$Z(G)$ of~$G$ can be defined as the Minkowski sum of the line segments~$[e_i, e_j]$ for all $ij\in E$, where $e_i$ is the $i$th canonical basis vector of~$\mathbb{R}^n$. We can also define~$Z(G)$ as follows.
For an orientation (not necessarily acyclic) $D$ of~$G$, we consider the \emph{in-degree sequence} of~$D$, defined as $\delta_D:=(d^-(1),\ldots,d^-(n))\in\mathbb{N}^n$.
Then the graphical zonotope of~$G$ is
\begin{equation*}
Z(G) = \conv \{ \delta_D \mid D \text{\ orientation\ of\ } G \}.
\end{equation*}
It can be shown that $\delta_D$ is a vertex of~$Z(G)$ if and only if $D$ is acyclic,
hence the above definition remains valid if we restrict~$D$ to be an acyclic orientation of~$G$.
For a simple proof of this fact, we refer to that of a more general statement given as Proposition~3.9 in~\cite{rehberg_2021}.
The following lemma is a simple consequence of the definitions (see also the discussion in~\cite{MR1609342}).

\begin{lemma}
The flip graph on acyclic orientations~$\AO_G$ of a graph~$G$ is isomorphic to the skeleton of its zonotope~$Z(G)$.
\end{lemma}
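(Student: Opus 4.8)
The plan is to exhibit an explicit isomorphism between the flip graph on $\AO_G$ and the $1$-skeleton of the graphical zonotope $Z(G)$, using the in-degree sequence map $D\mapsto\delta_D$ as the vertex bijection. First I would recall from the discussion above that $Z(G)=\conv\{\delta_D\mid D\text{ an orientation of }G\}$, and that $\delta_D$ is a vertex of $Z(G)$ if and only if $D$ is acyclic; moreover distinct acyclic orientations give distinct in-degree sequences, since $D$ can be recovered from $\delta_D$ (an edge $ij$ of $G$ is oriented toward whichever endpoint its contribution lands on, and this is forced once all in-degrees are fixed — more carefully, one reconstructs $D$ greedily from the unique sink, etc.). Hence $D\mapsto\delta_D$ is a bijection from $\AO_G$ to the vertex set of $Z(G)$.

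Next I would verify that this bijection carries edges to edges. Since $Z(G)$ is a zonotope, namely the Minkowski sum $\sum_{ij\in E}[e_i,e_j]$, its edges are exactly the translates of the generating segments $[e_i,e_j]$, and two vertices $\delta_D,\delta_{D'}$ span an edge precisely when their defining data differ in the orientation of a single edge $ij\in E$: reorienting the edge $ij$ changes $\delta_D$ by $\pm(e_i-e_j)$, i.e. moves along a translate of the segment $[e_i,e_j]$, and conversely. This matches exactly the arc-flip adjacency on $\AO_G$: $D$ and $D'$ differ by an arc flip iff they are acyclic orientations differing in the direction of one edge, iff $\delta_{D'}-\delta_D=\pm(e_i-e_j)$ for the flipped edge $ij$. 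Thus adjacency is preserved in both directions, and the bijection is a graph isomorphism.

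The main obstacle — really the only nontrivial point — is to pin down precisely why reorienting a single edge always moves between two \emph{vertices} of $Z(G)$ along an \emph{edge} of $Z(G)$, rather than, say, along a chord or to a non-vertex point. The clean way is to invoke the standard combinatorics of zonotopes: the face lattice of a zonotope $\sum_i[u_i,v_i]$ is governed by the associated oriented matroid / hyperplane arrangement, and for the graphical arrangement the two acyclic orientations obtained by reorienting one edge correspond to two full-dimensional cones of the graphical fan sharing a facet, hence (by duality) to two vertices of $Z(G)$ joined by an edge. Since the excerpt already grants us that the full-dimensional cones of the graphical fan biject with $\AO_G$ and that $Z(G)$ is dual to this fan, the edge-adjacency statement becomes: two cones share a facet iff the corresponding acyclic orientations differ by reorienting a single edge of the transitive reduction — and a transitive arc cannot be flipped, which is exactly the fact recorded just above the lemma. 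Assembling these observations gives the isomorphism, and I would cite \cite{G77,MR712251,MR2383131,MR1609342} for the underlying zonotope/fan duality rather than reproving it.
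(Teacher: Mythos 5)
Your proof is sound, and the approach is the natural one. The paper itself dispenses with a proof entirely (declaring the lemma ``a simple consequence of the definitions'' and pointing to~\cite{MR1609342}), so you are supplying details the authors chose to elide. You correctly isolate the only nontrivial point --- that a single arc flip corresponds to traversing an edge of $Z(G)$ rather than a chord or a non-edge --- and you correctly resolve it via the duality between the graphical fan and the graphical zonotope: two full-dimensional cones of the graphical fan share a facet if and only if the corresponding acyclic orientations differ on exactly one arc, and any such arc is automatically non-transitive, in line with the discussion of flippable arcs immediately preceding the lemma. Since for a simple graph the generators $e_i-e_j$, $ij\in E$, are pairwise non-parallel, each edge of the zonotope is a translate of a single generating segment, and the correspondence between arc flips and zonotope edges is tight in both directions.

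One small inaccuracy in your parenthetical on injectivity: an acyclic orientation of a connected graph need not have a unique sink. For example, on the path $1-2-3$, orienting both edges away from vertex~$2$ makes both $1$ and $3$ sinks, so the greedy ``peel off the unique sink'' procedure is not directly available. Injectivity of $D\mapsto\delta_D$ on $\AO_G$ is nevertheless true; a clean argument is to suppose $\delta_D=\delta_{D'}$ for acyclic $D\neq D'$ and let $S$ be the set of edges on which they disagree. Since $D'$ reverses every arc of $S$ and leaves the others unchanged, equality of in-degree sequences forces every vertex to have equal in- and out-degree within the subdigraph of~$D$ induced by~$S$; as $S\neq\emptyset$, that subdigraph contains a directed cycle, contradicting acyclicity of~$D$. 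Alternatively, you may simply note that the bijection between acyclic orientations and vertices of $Z(G)$ (including injectivity) is already part of the cited fact from Rehberg~\cite{rehberg_2021}, so there is no need to reprove it.
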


A \emph{partial cube} is a graph that has an isometric embedding in the graph of a hypercube.
Dual graphs of hyperplane arrangements are known to be partial cubes (see for instance Chapter~7 in~\cite{O11}).
This applies in particular to graphical arrangements, and directly yields the following statement.
\begin{lemma}
The flip graph on acyclic orientations~$\AO_G$ of a graph~$G$ is a partial cube.
\end{lemma}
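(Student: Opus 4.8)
The plan is to invoke the general fact that the region graph (dual graph) of any hyperplane arrangement is a partial cube, and then simply verify that the flip graph on $\AO_G$ is exactly that region graph for the graphical arrangement of $G$. Concretely, the graphical arrangement $\{H_{ij}\mid ij\in E\}$ in $\mathbb{R}^n$ partitions $\mathbb{R}^n$ into open regions, and each region is determined by a consistent choice of sign for $x_i-x_j$ on each hyperplane $H_{ij}$; such a sign vector is exactly an orientation of $G$, and the region is nonempty precisely when the orientation is acyclic. So the regions of the graphical arrangement are in bijection with $\AO_G$, and two regions are adjacent (share a facet, i.e.\ a codimension-$1$ piece of some $H_{ij}$) if and only if their sign vectors differ in exactly one coordinate, which is exactly an arc flip. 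Hence the region graph of the graphical arrangement is isomorphic to the flip graph on $\AO_G$.

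The key steps, in order, would be: (1) recall the definition of the region graph of a hyperplane arrangement and cite the standard result (e.g.\ Chapter~7 of~\cite{O11}) that it is a partial cube — more precisely that the map sending a region to its sign vector is an isometric embedding into a hypercube of dimension $|E|$; (2) identify the regions of the graphical arrangement with the elements of $\AO_G$, using the correspondence already recalled in Section~\ref{sec:zono} between full-dimensional cones of the graphical fan and acyclic orientations; (3) check that adjacency of regions (two regions whose closures meet in a common facet) corresponds to an arc flip, so that the region graph and the flip graph on $\AO_G$ are isomorphic; (4) conclude that the flip graph, being isomorphic to a partial cube, is a partial cube. Most of this is bookkeeping, since the essential bijection was established earlier in the excerpt and the partial-cube property of arrangement region graphs is a cited black box.

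The main obstacle — such as it is — is step (3): one must be slightly careful that two \emph{adjacent} regions of the arrangement differ in the sign of \emph{exactly one} hyperplane, and conversely that flipping a single arc of an acyclic orientation yields another acyclic orientation whose region is adjacent to the first (rather than, say, sharing only a lower-dimensional face). The first direction is a general fact about simple arrangements applied here; the second follows because a flippable arc is precisely one in the transitive reduction $T_D$ (as noted in Section~\ref{sec:flip} and in the discussion of arc flips), and crossing only the corresponding hyperplane $H_{ij}$ keeps all other sign relations intact, so the resulting sign vector is again realized by a nonempty region. Once this is in place, the isometric embedding of the region graph into the $|E|$-cube restricts to an isometric embedding of the flip graph, giving the claim.

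\begin{proof}
The \emph{region graph} of a hyperplane arrangement $\cA$ in $\mathbb{R}^n$ has as vertices the connected components (regions) of $\mathbb{R}^n\setminus\bigcup\cA$, with an edge between two regions whose closures share a facet, i.e.\ a subset of a single hyperplane of full codimension one. For each hyperplane $H\in\cA$ and each region $R$, the region lies entirely on one side of $H$; recording these choices gives a sign vector in $\{+,-\}^{\cA}$, and it is classical (see Chapter~7 of~\cite{O11}) that this assignment is an isometric embedding of the region graph into the $|\cA|$-dimensional hypercube. In particular, every region graph is a partial cube.

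Apply this to the graphical arrangement $\cA=\{H_{ij}\mid ij\in E\}$ of $G=([n],E)$, where $H_{ij}=\{x\in\mathbb{R}^n\mid x_i=x_j\}$. A point $x$ avoiding all hyperplanes determines, for each edge $ij\in E$, whether $x_i<x_j$ or $x_i>x_j$; orienting $ij$ from the smaller to the larger coordinate yields an orientation of $G$ that is acyclic, since it is induced by the linear order of the coordinates of $x$. Conversely, as recalled in Section~\ref{sec:zono}, the full-dimensional cones of the graphical fan are in one-to-one correspondence with the acyclic orientations of $G$, and these cones are exactly the regions of $\cA$. Thus the sign vector of a region, viewed in $\{+,-\}^E$, is precisely an acyclic orientation of $G$, and this gives a bijection between the regions of $\cA$ and $\AO_G$.

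It remains to match edges. Two regions are adjacent in the region graph if and only if their sign vectors differ in exactly one coordinate, i.e.\ the corresponding acyclic orientations $D,D'$ differ in the orientation of exactly one edge $ij$. Reversing a single arc of an acyclic orientation yields an acyclic orientation precisely when that arc is flippable, i.e.\ lies in the transitive reduction $T_D$; and when it does, the reversal keeps the orientation of every other edge unchanged, so the new sign vector indeed corresponds to a nonempty region adjacent to the first across $H_{ij}$. Hence two regions are adjacent if and only if the corresponding acyclic orientations differ in an arc flip, so the region graph of $\cA$ is isomorphic to the flip graph on $\AO_G$. Being isomorphic to a partial cube, the flip graph on $\AO_G$ is a partial cube.
\end{proof}
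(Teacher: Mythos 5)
Your proof takes exactly the paper's approach: the paper simply notes that dual graphs of hyperplane arrangements are partial cubes (citing Chapter~7 of~\cite{O11}) and that this applies to the graphical arrangement, which is also the core of your argument. You have merely spelled out the bijection between regions and acyclic orientations and the correspondence between facet-adjacency and arc flips, which the paper leaves implicit.
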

As a consequence, the \emph{flip distance} between two acyclic orientations of $G$, i.e.,
the minimum number of arc flips needed to transform one into the other, is equal to the number of edges oriented oppositely in the two orientations.

\subsection{Acyclic orientations of chordal graphs}

For a graph~$G=([n],E)$ and an integer $i\leq n$, we write $G_i$ for the subgraph of $G$ induced by the vertices in~$[i]$.
Similarly, for a digraph~$D$ with vertex set~$[n]$ and an integer~$i\leq n$, we write $D_i$ for the subdigraph of~$D$ induced by~$[i]$.
A graph is \emph{chordal} if every induced cycle has length~3.
A vertex whose neighborhood forms a clique is called \emph{simplicial}.
A graph $G=([n],E)$ is in \emph{perfect elimination order} if for all~$i\in [n]$, the vertex~$i$ is simplicial in~$G_i$.
It is well known that a graph is chordal if and only if it is isomorphic to a graph~$([n],E)$ in perfect elimination order~\cite{MR186421}.

We say that a graph $G=([n],E)$ has the \emph{unique parent-child property} if either $n=0$, or $n\geq 1$ and the following two conditions are satisfied:
\begin{enumerate}[label=(\roman*),leftmargin=8mm, noitemsep, topsep=1pt plus 1pt]
\item \label{itm:upc1} for every acyclic orientation~$D\in\AO_G$ the vertex~$n$ has in-degree and out-degree at most~1 in the transitive reduction $T_D$ of $D$;
\item the graph~$G_{n-1}$ has the unique parent-child property.
\end{enumerate}

\begin{lemma}
\label{lem:upc-peo}
A graph $G=([n],E)$ has the unique parent-child property if and only if it is in perfect elimination order.
\end{lemma}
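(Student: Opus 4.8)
The plan is to prove both implications of Lemma~\ref{lem:upc-peo} by induction on~$n$, with the base case $n=0$ being trivial since both properties hold vacuously. For $n\geq 1$, the second clause in the definition of the unique parent-child property matches exactly the inductive structure ``$G_{n-1}$ has the unique parent-child property,'' and the definition of perfect elimination order similarly recurses to ``$G_{n-1}$ is in perfect elimination order.'' So by the induction hypothesis applied to~$G_{n-1}$, the whole statement reduces to showing the equivalence of the two ``top-level'' conditions for vertex~$n$: namely, that vertex~$n$ has in-degree and out-degree at most~$1$ in~$T_D$ for every $D\in\AO_G$, if and only if vertex~$n$ is simplicial in $G_n=G$.

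For the direction ``$n$ simplicial $\Rightarrow$ degree condition,'' I would argue as follows. Let $N=N(n)$ be the neighborhood of~$n$ in~$G$; by assumption $G[N]$ is a clique. Fix any acyclic orientation $D\in\AO_G$. The restriction of~$D$ to~$N$ is an acyclic orientation of a clique, hence a total order; write $u$ for its source and~$w$ for its sink (so every vertex of~$N$ lies on the directed path from~$u$ to~$w$ in~$D$). Now consider the arcs of~$D$ incident to~$n$. Every in-neighbor $x$ of~$n$ in~$D$ with $x\neq u$ satisfies $x\to n$ but also has a directed path $x\leftarrow\cdots\leftarrow u$ inside~$N$, so $u\to\cdots\to x\to n$ witnesses that the arc $u\to n$ is present and the arc $x\to n$ is transitive; thus the only in-neighbor of~$n$ that can appear in~$T_D$ is~$u$, giving in-degree at most~$1$ in~$T_D$. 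Symmetrically, the only out-neighbor of~$n$ surviving in~$T_D$ is~$w$, giving out-degree at most~$1$. (One must also handle the degenerate cases $N=\emptyset$, $|N|=1$, or $u=w$, which only make the degrees smaller.)

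For the contrapositive of the other direction, suppose $n$ is not simplicial, i.e., $G[N]$ is not a clique, so there exist $a,b\in N$ with $ab\notin E$. The plan is to construct an explicit acyclic orientation $D\in\AO_G$ in which $n$ has in-degree~$2$ in~$T_D$ (or symmetrically out-degree~$2$). Orient $a\to n$ and $b\to n$, and extend to a full acyclic orientation of~$G$ in a way that makes both arcs non-transitive. Concretely, pick a linear extension of the partial information ``$a<n$, $b<n$'' on~$V$ that places $a$ and~$b$ as the two immediate predecessors of~$n$ among $N\cup\{n\}$ — this is possible precisely because $a$ and~$b$ are incomparable (no edge $ab$), so neither is forced below the other. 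Orienting every edge of~$G$ from lower to higher in this linear order yields $D\in\AO_G$; since there is no edge $ab$ and nothing between $a,b$ and~$n$ in the relevant region, neither $a\to n$ nor $b\to n$ can be shortcut by a directed path in~$D$, so both survive in~$T_D$, and $n$ has in-degree $\geq 2$ in~$T_D$, contradicting the degree condition.

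The main obstacle I anticipate is the careful bookkeeping in this last construction: one must verify that the chosen linear order really does keep both $a\to n$ and $b\to n$ in the transitive reduction, which requires checking that no vertex~$c$ creates a path $a\to c\to n$ or $b\to c\to n$. The clean way to avoid trouble is to choose the linear extension so that $a$ and~$b$ occupy the two positions immediately before~$n$ (with all other neighbors of~$n$, and indeed all other vertices comparable to~$n$ from below, placed still earlier), so that the only in-neighbors of~$n$ adjacent to~$n$ that could lie ``between'' are~$a$ and~$b$ themselves, and $ab\notin E$ rules out a path through one into the other. A symmetric remark handles the case where one would rather witness out-degree~$2$. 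Everything else — the reduction to vertex~$n$ via the induction hypothesis, and the clique-is-a-total-order observation — is routine.
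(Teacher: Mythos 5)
Your high-level structure---reduce via induction to the equivalence of the two conditions at the top-level vertex~$n$---matches the paper's, and your handling of the contrapositive of~$(\Rightarrow)$ is sound after the fix you yourself anticipate: the paper orients all non-$n$ edges incident with~$a$ or~$b$ towards~$a$ or~$b$, making them sinks in~$D-n$, which has the same effect as your clean linear order placing $a,b$ in the two positions just before~$n$; either way $a$ and~$b$ have no out-arcs other than to~$n$, so $a\to n$ and $b\to n$ cannot be shortcut.

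However, the $(\Leftarrow)$ direction contains a genuine logical error. You claim that for an in-neighbor $x\neq u$ of~$n$ the path $u\to\cdots\to x\to n$ witnesses that the arc $x\to n$ is transitive, and hence that the only in-neighbor of~$n$ appearing in~$T_D$ is~$u$. This is backwards: that path shows $u\to n$ is transitive (it is a shortcut of that very path) and says nothing about whether $x\to n$ can be shortcut. In fact the in-neighbor of~$n$ surviving in~$T_D$ is not~$u$, the source of the total order on~$N$, but the \emph{maximum} in-neighbor of~$n$ in that order. Concretely, take $G=K_3$ with $n=3$ and $D$ oriented as $1\to 2$, $1\to 3$, $2\to 3$: here $u=1$, yet $1\to 3$ is transitive and the arc of~$T_D$ into~$3$ is $2\to 3$. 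The symmetric claim about out-neighbors has the same flaw. The correct argument, which the paper gives concisely, is that since $n$ is simplicial, $N(n)\cup\{n\}$ induces a clique whose transitive reduction in any $D\in\AO_G$ is a directed path; any arc of~$T_D$ incident with~$n$ is in particular non-transitive inside this clique and hence belongs to that path, so $n$ has at most one incoming and one outgoing arc in~$T_D$. If you prefer to keep your total-order phrasing with $N$ ordered as $v_1\to\cdots\to v_k$: acyclicity forces the in-neighbors of~$n$ to be a prefix $v_1,\ldots,v_m$, and each $v_i\to n$ with $i<m$ is transitive via $v_i\to\cdots\to v_m\to n$, so only $v_m\to n$ can survive.
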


\begin{proof}
$(\Leftarrow)$ By definition, the vertex~$n$ is simplicial in $G$, hence $n$ together with its neighbors induce a clique.
In any acyclic orientation $D\in\AO_G$, the transitive reduction of this clique is a path.
Therefore, the vertex~$n$ is involved in at most two arcs of~$T_D$ and has in-degree and out-degree at most~1.
The same holds for the vertex~$i$ in~$G_i$, for all $i\in [n]$.

\noindent$(\Rightarrow)$ Suppose that $G$ is not in perfect elimination order.
Without loss of generality, suppose that there are two nonadjacent neighbors~$a,b$ of~$n$ in~$G$.
Consider an orientation $D$ of $G$ in which
\begin{itemize}[leftmargin=5mm, noitemsep, topsep=1pt plus 1pt]
\item every arc having $n$ as endpoint is directed towards~$n$;
\item any other arc having~$a$ or~$b$ as endpoint is directed towards~$a$ or~$b$, respectively;
\item all other arcs $ij\in E$ are directed towards~$\max\{i,j\}$.
\end{itemize}
Then $D$ is an acyclic orientation of~$G$ such that the arcs $a\rightarrow n$ and $b\rightarrow n$ are both present in~$T_D$, contradicting condition~\eqref{itm:upc1} of the unique parent-child property.
\end{proof}

\subsection{Savage-Squire-West as an instance of Algorithm~J}

Recall the Savage-Squire-West Gray code for acyclic orientations of a chordal graph in perfect elimination order described in Section~\ref{sec:acyclic}.
We now show how to derive this Gray code as a special case of Algorithm~J in the Hartung-Hoang-M\"utze-Williams framework introduced in Section~\ref{sec:framework}.
For this purpose, we map acyclic orientations of a graph to permutations so that the image of all acyclic orientations forms a zigzag language of permutations.
This mapping is illustrated in Figures~\ref{fig:cong2} and~\ref{fig:ladder}.

\begin{lemma}
\label{lem:peo2perm}
Let $G=([n],E)$ be a graph in perfect elimination order.
With any acyclic orientation $D\in\AO_G$ we associate a permutation $\pi_D\in S_n$ as follows:
If $n=0$ then $\pi_D:=\varepsilon$, and if $n\geq 1$ we consider three cases:
\begin{enumerate}[label=(\roman*),leftmargin=8mm, noitemsep, topsep=1pt plus 1pt]
\item if the vertex~$n$ is a sink in $D$, then $\pi_D:=c_n(\pi_{D_{n-1}})$;
\item if the vertex~$n$ is a source in $D$, then $\pi_D:=c_1(\pi_{D_{n-1}})$;
\item otherwise, $\pi_D:=c_i(\pi_{D_{n-1}})$, where $i$ is the position in $\pi_{D_{n-1}}$ of the unique out-neighbor of $n$ in the transitive reduction $T_D$.
\end{enumerate}
Then the map $\AO_G\to S_n:D\mapsto \pi_D$ is injective, and
\begin{equation}
\label{eq:PiG}
\Pi_G:=\{\pi_D \mid D\in\AO_G\}
\end{equation}
is a zigzag language of permutations.
\end{lemma}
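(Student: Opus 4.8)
The plan is to prove both claims—injectivity and the zigzag property—by induction on $n$, leveraging the recursive structure of the definition of $\pi_D$. The base case $n=0$ is trivial since $\AO_G$ consists of the empty orientation and $\Pi_G=\{\varepsilon\}=L_0$. For the inductive step, I would first observe that the map $D\mapsto D_{n-1}$ sends $\AO_G$ onto $\AO_{G_{n-1}}$, and that $G_{n-1}$ is again in perfect elimination order, so by induction the map $D'\mapsto\pi_{D'}$ on $\AO_{G_{n-1}}$ is injective with image a zigzag language $\Pi_{G_{n-1}}$. The key structural input is Lemma~\ref{lem:upc-peo} (via the unique parent-child property): in any $D\in\AO_G$ the vertex $n$ has in-degree and out-degree at most $1$ in $T_D$, so exactly one of the three cases in the definition applies, and in case~(iii) the ``out-neighbor of $n$ in $T_D$'' is genuinely unique, making $\pi_D$ well-defined.

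For injectivity, I would argue that $\pi_D$ determines both $\pi_{D_{n-1}}$ and the orientation of all edges incident to $n$. Indeed, $p(\pi_D)=\pi_{D_{n-1}}$ recovers the restriction, which by the inductive hypothesis recovers $D_{n-1}$. It remains to see that the position $i$ at which $n$ was inserted, together with $D_{n-1}$, determines the orientation of every edge $\{j,n\}\in E$. Since $n$ is simplicial, its neighborhood $N(n)$ is a clique, whose transitive reduction in $D_{n-1}$ is a directed path; inserting $n$ ``at position $i$'' corresponds (via $\pi_{D_{n-1}}$) to splitting this path at a specific point, which fixes which neighbors point into $n$ and which point out of $n$, hence fixes all remaining arcs. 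The edge cases $i=1$ (source) and $i=n$ (sink) are consistent with cases~(i) and~(ii). This is where I expect the main bookkeeping, and the main obstacle: one must carefully relate the linear position $i$ in the permutation $\pi_{D_{n-1}}$ to the position of $n$ along the transitive-reduction path through the clique $N(n)$, and check that every position $1\le i\le n$ arises from a unique acyclic orientation. This likely needs a clean lemma identifying, for a clique in an acyclic orientation, the relationship between its vertices' order along the transitive-reduction path and their order in the associated permutation.

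For the zigzag property, I need to show $\Pi_{G_{n-1}}=\{p(\pi)\mid\pi\in\Pi_G\}$—which is immediate from surjectivity of $D\mapsto D_{n-1}$ and the relation $p(\pi_D)=\pi_{D_{n-1}}$—and then verify (z1) or (z2). I would split on whether $n$ is an isolated vertex of $G$. If $n$ has no neighbors, then in every $D\in\AO_G$ the vertex $n$ is simultaneously a source and a sink; to make $\pi_D$ well-defined I suspect the convention picks case~(i), giving $\Pi_G=\{c_n(\pi)\mid\pi\in\Pi_{G_{n-1}}\}$, which is exactly~(z2). If $n$ has at least one neighbor, I need that for every $D'\in\AO_{G_{n-1}}$, both $c_1(\pi_{D'})$ and $c_n(\pi_{D'})\in\Pi_G$: these correspond to the acyclic orientations of $G$ obtained from $D'$ by making $n$ a source, respectively a sink, which are always acyclic (adding a new source or sink never creates a cycle) and map to $c_1(\pi_{D'})$, $c_n(\pi_{D'})$ by cases~(ii) and~(i). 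That establishes~(z1). Combined with the inductive hypothesis that $\Pi_{G_{n-1}}$ is a zigzag language, this completes the proof that $\Pi_G$ is one.
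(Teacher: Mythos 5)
Your outline is correct and, like the paper's, is inductive with the base case $n=0$ and the unique parent-child property (Lemma~\ref{lem:upc-peo}) ensuring well-definedness of case~(iii); the (z1)/(z2) split on whether $n$ is isolated also matches the paper's convention. Where you and the paper diverge is in the injectivity step. You propose to show that the insertion position $i$ together with $D_{n-1}$ determines the orientation of every edge incident with $n$, and you correctly anticipate that the bookkeeping here is the delicate part. The paper sidesteps this entirely with the observation (stated right after the lemma) that $\pi_D$ is a linear extension of the poset defined by $D$: once this is known, $D$ is recovered from $\pi_D$ simply by orienting every edge $jk$ of $G$ toward whichever of $j,k$ appears farther right in $\pi_D$, so injectivity is immediate. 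Verifying the linear-extension property inductively amounts to exactly the ``clean lemma'' you ask for: since $N(n)$ is a clique, it is a chain in the poset of $D_{n-1}$, so its vertices occur in $\pi_{D_{n-1}}$ in their chain order, and inserting $n$ at the position of its unique out-neighbor in $T_D$ places it between the in- and out-neighbors as required. One small imprecision in your write-up: you should not try to ``check that every position $1\le i\le n$ arises from a unique acyclic orientation'' — only $|N(n)|+1$ of the $n$ positions are hit (namely $1$, $n$, and the positions of $v_2,\dots,v_k$ in the chain $v_1\to\cdots\to v_k$ induced on $N(n)$). What injectivity actually needs is only that distinct extensions of a fixed $D_{n-1}$ yield distinct positions, which holds because those positions are pairwise distinct; the linear-extension route makes even this unnecessary.
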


If the vertex~$n$ is isolated in~$G$, then it is both a sink and a source, in which case we use the encoding stated under~(i), and then the special condition~(z2) in the definition of zigzag languages applies.
Observe that $\pi_D$ is a linear extension of the poset defined by the transitive closure of~$D$ (whose cover graph is~$T_D$), and that the orientation~$D$ can be retrieved from $\pi_D$ by orienting every edge~$ij$ of $G$ towards the vertex in~$\{i,j\}$ that is further to the right in~$\pi_D$.
Also note that for any acyclic orientation~$D$ in which vertex~$i$ is a source or a sink in~$D_i$ for all~$i\in[n]$, the permutation~$\pi_D$ is peak-free.
In particular, if $i$ is a sink in~$D_i$ for all~$i\in[n]$, then $\pi_D=\ide_n$ is the identity permutation.
As remarked before, any of those permutations can serve as initial permutation~$\pi_0$ for Algorithm~J.

\begin{theorem}
For every graph $G = ([n],E)$ in perfect elimination order, Algorithm~J with input~$\Pi_G$ as defined in~\eqref{eq:PiG} generates a sequence of permutations $\pi_{D_1}, \pi_{D_2}, \ldots$, where $D_1,D_2,\ldots \in \AO_G$ such that $D_1,D_2,\ldots $ is a Hamilton path in the flip graph on acyclic orientations of~$G$, or equivalently, on the skeleton of the graphical zonotope~$Z(G)$.
\end{theorem}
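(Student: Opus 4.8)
The plan is to combine the two results stated just before this theorem, namely Lemma~\ref{lem:peo2perm} (which says $\Pi_G$ is a zigzag language and $D\mapsto\pi_D$ is injective) and Theorem~\ref{thm:jump} (which says Algorithm~J on a zigzag language with $\pi_0=\ide_n$ visits every permutation exactly once), and then to verify that the jumps performed by Algorithm~J correspond precisely to arc flips. First I would invoke Lemma~\ref{lem:peo2perm} to conclude that $\Pi_G\seq S_n$ is a zigzag language, so by Theorem~\ref{thm:jump} Algorithm~J with initial permutation $\ide_n$ lists every element of $\Pi_G$ exactly once; by injectivity of $D\mapsto\pi_D$, this is the same as listing every acyclic orientation $D\in\AO_G$ exactly once, say in the order $D_1,D_2,\ldots$ with $\pi_{D_t}$ the $t$th permutation visited. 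It remains to show that consecutive orientations $D_t,D_{t+1}$ differ by a single arc flip, i.e.\ that the sequence $D_1,D_2,\ldots$ traces a walk in the flip graph which, since no vertex repeats, is a Hamilton path; the last sentence of the excerpt then identifies this flip graph with the skeleton of $Z(G)$ via the lemma in Section~\ref{sec:zono}.

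The core step, then, is the translation between a minimal jump of a value and an arc flip. I would argue this as follows. Recall from the remark after Lemma~\ref{lem:peo2perm} that $D$ is recovered from $\pi_D$ by orienting each edge $ij\in E$ towards whichever of $i,j$ lies further right in $\pi_D$. Suppose Algorithm~J performs a jump of the value $v$ in $\pi_{D_t}$, say a right jump over a substring $v\,a_{1}\cdots a_{d}$ with $v>a_1,\dots,a_d$, producing $\pi_{D_{t+1}}$ in which $v$ has moved past $a_1,\dots,a_d$. The relative order of every pair not involving $v$ is unchanged, so $D_t$ and $D_{t+1}$ agree on all edges not incident to $v$. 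For an edge $v a_j\in E$: before the jump $a_j$ was right of $v$ (edge oriented $v\to a_j$), after it $a_j$ is left of $v$ (edge oriented $a_j\to v$), so this edge is reoriented; for an edge $vw\in E$ with $w$ not among $a_1,\dots,a_d$, the relative order of $v$ and $w$ is unchanged. Hence $D_t$ and $D_{t+1}$ differ exactly on the set of edges from $v$ to those $a_j$ that are actually adjacent to $v$ in $G$. The point is to show this set is a \emph{single} edge: this is where minimality of the jump and the perfect elimination order enter. Since $v$'s neighborhood in $G_v$ is a clique, the neighbors of $v$ that lie to the right of $v$ in $\pi_{D_t}$ occupy a contiguous block in the clique's path order; a jump over any $a_j$ that skips a neighbor $a_{j'}$ ($j'<j$) of $v$ would require $a_{j'}$ to end up left of $v$ too, but one checks that the smallest legal jump (the minimal one producing a permutation in $\Pi_G$) moves $v$ past exactly one neighbor at a time — equivalently, over at most one element of $\{i,j:ij\in E\}$. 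I would make this precise using the inductive structure: the permutations in $\Pi_G$ are exactly the linear extensions-like encodings where $v$ sits in a position determined by its out-neighbor in $T_{D}$, and the minimal jump advances $v$ to the next such admissible position, which differs from the current one by exactly one transitive-reduction arc at $v$, i.e.\ one flippable edge.

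The main obstacle I anticipate is precisely this last verification: showing that a \emph{minimal} jump with respect to the zigzag language $\Pi_G$ (not with respect to all of $S_n$) corresponds to flipping exactly one arc, rather than reorienting a bundle of edges at $v$. The subtlety is that $v$ may have several neighbors clumped together in $\pi_{D_t}$, and a naive jump would swap $v$ past all of them at once; one has to argue that such a large jump lands outside $\Pi_G$ (because it would make some induced orientation contain two transitive-reduction arcs at $v$, which an acyclic orientation of a clique forbids, cf.\ the proof of Lemma~\ref{lem:upc-peo}), so the minimal jump stops after a single neighbor. Concretely I would phrase it as: among the positions $i$ with $c_i(p(\pi))\in\Pi_G$ that lie between $v$'s current position and its target direction, consecutive admissible positions are separated by exactly one neighbor of $v$, because between two non-neighbors of $v$ the orientation of $v$'s edges does not change and hence acyclicity/membership is unaffected, so those intermediate positions are \emph{also} admissible — contradicting minimality unless the jump moved over a single neighbor. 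Once this is established the rest is immediate, and I would close by noting that since Algorithm~J visits each of the $|\AO_G|$ orientations exactly once and consecutive ones are joined by a flip-graph edge, $D_1,D_2,\ldots$ is a Hamilton path, which by the isomorphism with the skeleton of $Z(G)$ is a Hamilton path on the graphical zonotope.
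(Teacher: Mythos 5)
The high-level skeleton is right and matches the paper's: invoke Lemma~\ref{lem:peo2perm} to get a zigzag language and injectivity of $D\mapsto\pi_D$, invoke Theorem~\ref{thm:jump} for exhaustive generation, then show that the minimal jumps performed by Algorithm~J translate into single arc flips. (In the paper this last step is not done directly for graphs; the theorem is obtained as the graph special case of Theorem~\ref{thm:mainhyper}, whose crux is Lemma~\ref{lem:algJ-flip}, proved via Lemma~\ref{lem:clean-jumps} and the inductive Lemma~\ref{lem:jump-flip}.) So the framing is fine. The problems are in the core step.

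First, the justification you give for ``one neighbor per minimal jump'' is wrong in a way that matters. You write that between two non-neighbors of~$v$ ``the orientation of $v$'s edges does not change and hence acyclicity/membership is unaffected, so those intermediate positions are also admissible.'' This inverts the logic. Moving~$v$ past a non-neighbor leaves the induced orientation~$D$ unchanged but changes the permutation, and since $D\mapsto\pi_D$ is injective there is \emph{exactly one} permutation in $\Pi_G$ encoding $D$ --- so such intermediate positions are \emph{not} in~$\Pi_G$. The correct mechanism is the opposite of what you stated: positions that do not cross a new neighbor are skipped precisely because they fall outside $\Pi_G$, and the next admissible position is the canonical spot for the unique orientation obtained by flipping one arc at~$v$, which by condition~(iii) of Lemma~\ref{lem:peo2perm} sits immediately to the left of the next neighbor. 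That a jump over two neighbors is non-minimal follows because the canonical permutation for the ``one-flip'' orientation lies strictly between them; it has nothing to do with ``two transitive-reduction arcs at~$v$,'' which never arise (every intermediate insertion of~$v$ still yields a valid acyclic orientation of the clique).

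Second, and more seriously, you only analyze the case where the jumped value~$v$ is~$n$. Algorithm~J jumps the largest value for which a minimal jump to a new element of $\Pi_G$ exists; this need not be~$n$. To reduce the general case to the ``largest value'' case one needs the cleanness of jumps (Lemma~\ref{lem:clean-jumps}): when Algorithm~J jumps~$v<n$, every value $k>v$ sits at a boundary position, so the jump of~$v$ can be analyzed in the restriction of the permutation to~$[v]$ and then lifted back. Showing that the lift stays in $\Pi_G$ (rather than merely in $\Pi_{G_v}$) is exactly the inductive bookkeeping in Case~2 of Lemma~\ref{lem:jump-flip}. You acknowledge the issue (``I would make this precise using the inductive structure'') but do not carry it out, and without cleanness the reduction to the $v=n$ case is not available. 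As written, the proposal is an outline with a correct target claim but an incorrect argument for it and a missing case.
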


\subsubsection{Efficient implementation}
\label{sec:SSW-algo}

We can now use the history-free implementation of Algorithm~J developed in~\cite[Sec.~5.1+8.7]{perm_series_iii} to compute the Savage-Squire-West Gray code efficiently; see the pseudocode stated as Algorithm~A.
In the following, for a chordal graph~$G=([n],E)$ in perfect elimination order, we write $N_i$ for the set of neighbors of~$i$ in~$G_i$.
For simplicity, the algorithm assumes that~$G$ is connected.
We remark that the case of disconnected chordal graphs~$G$ can be handled with slight adjustments, yielding the same runtime guarantees.
For details, see our C++ implementation~\cite{cos_orient}.

The algorithm maintains the current acyclic orientation~$D=([n],A)$ of~$G$ as an $n\times n$ adjacency matrix~$A$ with entries $a_{i,j}\in\{0,1\}$, where $a_{i,j}=1$ if and only if the arc $i\rightarrow j$ is present in~$D$.

The initial acyclic orientation~$D$ of~$G$ defined in step~A1 makes the vertex~$j$ a sink in~$D_j$ for all $j\in [n]$.
The corresponding permutation~$\pi_D$ is the identity permutation~$\pi_D=\ide_n$.
After the initialization in step~A1, the algorithm loops through steps~A2--A6, where A2 visits the current acyclic orientation~$D$, and steps~A3--A6 update the data structures before the next visit.
In step~A3, the auxiliary array~$s=(s_1,\ldots,s_n)$ is used to determine which vertex~$j$ is selected to have one of the arcs incident with~$j$ in~$D_j$ being flipped.
The array~$o=(o_1,\ldots,o_n)$ keeps track, for each vertex~$j\in[n]$, whether its current zigzag movement is from being sink to being source, in which case $o_j=\dirl$, or from source to sink, in which case $o_j=\dirr$; recall Figure~\ref{fig:acyclic-flip}.

Once a vertex~$j$ has become source or sink in~$D_j$, before flipping any arcs incident with it, in step~A4 the algorithm determines the transitive reduction of the clique in~$D_j$ induced by the vertices in~$N_j$.
Computing the transitive reduction (which is a path, i.e., a total order) amounts to sorting the set~$N_j$, using for comparisons the orientations of the arcs between vertices~$i,i'\in N_j$, which can be queried in constant time by reading the entries~$a_{i,i'}$ of the adjacency matrix.
This sorting happens exactly once at the beginning of each zigzag movement, and the resulting total order is stored in the array~$T_j$.
In each execution of step~A5, the next arc incident with~$j$ in the precomputed list~$T_j$ is flipped, using the index~$t_j$ into the list~$T_j$.
Once $t_j$ reaches the maximum value~$t_j=|N_j|$, i.e., the last entry of the list~$T_j$, which means that~$j$ has now become a source or sink in~$D_j$, then the direction~$o_j$ of the zigzag movement for~$j$ is reversed in step~A6, and the array~$s$ is updated accordingly (see~\cite{perm_series_iii} for details).

\begin{algo}{\bfseries Algorithm~A}{History-free arc flips}
Given a connected chordal graph~$G=([n],E)$ in perfect elimination order, this algorithm generates all acyclic orientations of~$G$ by arc flips in the order given by the Savage-Squire-West construction (recall Section~\ref{sec:acyclic}).
It maintains the current acyclic orientation~$D=([n],A)$ of~$G$ as an adjacency matrix~$A=(a_{i,j})_{i,j\in[n]}$, with $a_{i,j}\in\{0,1\}$, total orderings~$T_j$ of $N_j$ and an index~$t_j$, $0\leq t_j\leq |N_j|$, into the array~$T_j$ for all $j=1,\ldots,n$, as well as auxiliary arrays $o=(o_1,\ldots,o_n)$ and $s=(s_1,\ldots,s_n)$.
\begin{enumerate}[label={\bfseries A\arabic*.}, leftmargin=8mm, noitemsep, topsep=3pt plus 3pt]
\item{} [Initialize] For $i,j=1,\ldots,n$ set $a_{i,j}\gets 0$.
Then for $j=1,\ldots,n$, orient all arcs incident with~$j$ in~$D_j$ towards~$j$, i.e., for all $i\in N_j$ set $a_{i,j}\gets 1$.
Also set $t_j\gets 0$, $o_j\gets \dirl$, and $s_j\gets j$ for $j=1,\ldots,n$.
\item{} [Visit] Visit the current acyclic orientation~$D=([n],A)$.
\item{} [Select vertex] Set $j\gets s_n$, and terminate if $j=1$.
\item{} [Sort neighbors] If $t_j=0$, compute the transitive reduction $v_1\rightarrow v_2\rightarrow \cdots\rightarrow v_k$ of the clique in~$D_j$ formed by the vertices in~$N_j$ via sorting, using the adjacency matrix entries~$A_{i,i'}$, $i,i'\in N_j$, for comparisons.
If $o_j\gets \dirl$, set $T_j\gets (v_k,v_{k-1},\ldots,v_1)$, and if $o_j=\dirr$ set $T_j\gets (v_1,v_2,\ldots,v_k)$.
\item{} [Flip arc] Set $t_j\gets t_j+1$.
In the current acyclic orientation~$D$, flip the arc between~$j$ and~$i\gets T_{j,t_j}$, i.e., if $o_j=\dirl$ set $a_{i,j}\gets 0$ and $a_{j,i}\gets 1$, whereas if $o_j=\dirr$ set $a_{i,j}\gets 1$ and $a_{j,i}\gets 0$.
\item{} [Update $o$ and $s$] Set $s_n\gets n$.
If $t_j=|N_j|$, then if $o_j=\dirl$ ($j$ has become source in~$D_j$) set $o_j\gets \dirr$, and if $o_j=\dirr$ ($j$ has become sink in~$D_j$) set $o_j\gets \dirl$, and in both cases set $t_j\gets 0$, $s_j\gets s_{j-1}$ and $s_{j-1}\gets j-1$. Go back to~A2.
\end{enumerate}
\end{algo}

\begin{theorem}
\label{thm:SSW-algo}
Given a connected chordal graph~$G=([n],E)$ in perfect elimination order, Algorithm~A visits each acyclic orientation of~$G$ in time~$\cO(\log \omega)$ on average, where $\omega=\omega(G)$ is the clique number of~$G$.
\end{theorem}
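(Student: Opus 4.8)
The plan is to take the correctness of Algorithm~A for granted and concentrate entirely on the running time. Correctness is inherited from the framework: Algorithm~A is the history-free implementation of Algorithm~J from \cite[Sec.~5.1+8.7]{perm_series_iii}, specialized to the zigzag language~$\Pi_G$ of Lemma~\ref{lem:peo2perm}, so by the preceding theorem it visits each orientation in~$\AO_G$ exactly once, in Savage--Squire--West order. For the timing, I would first write $N:=|\AO_G|$ for the number of visited orientations and observe that the main loop~A2--A6 runs $N$ times and performs exactly $N-1$ arc flips; every step except the sort in step~A4 costs~$\cO(1)$ per iteration (step~A3 reads one array entry, step~A5 updates two matrix entries, step~A6 updates a constant number of array entries), and the initialization~A1 costs $\cO(n^2)$ for zeroing the matrix plus $\cO(m)=\cO(n^2)$ for orienting the arcs.

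Next I would analyze a single execution of step~A4. When vertex~$j$ is selected with $t_j=0$, the algorithm sorts~$N_j$ into the order induced by the transitive reduction of the clique it spans in~$D_j$; this is a genuine total order because, $G$ being in perfect elimination order, $N_j\cup\{j\}$ is a clique in~$G_j$, so $N_j$ induces a transitive tournament in the acyclic orientation~$D$, and each comparison of $i,i'\in N_j$ is a single lookup of the adjacency-matrix entry~$a_{i,i'}$. Hence one such sort takes time~$\cO(|N_j|\log|N_j|)$, and since $|N_j|+1\le\omega(G_j)\le\omega$ we have $|N_j|<\omega$, so each sort costs~$\cO(\omega\log\omega)$.

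The crux --- and the step I expect to be the main obstacle --- is the amortization showing that sorts are rare enough. For fixed~$j$, the counter~$t_j$ runs through $0,1,\dots,|N_j|$ and is reset to~$0$ only in step~A6, right after the $|N_j|$-th consecutive flip of an arc incident with~$j$; a sort is triggered only when $t_j=0$ as~$j$ is selected, i.e., once at the start of each full ``zigzag movement'' of~$j$ (from sink to source in~$D_j$, or back), and that movement performs exactly $|N_j|$ flips of arcs incident with~$j$. So, writing $f_j$ for the number of loop iterations in which~$j$ is the selected vertex, the number of sorts on behalf of~$j$ over the whole run is at most $f_j/|N_j|+1$, and the time spent sorting for~$j$ is $\cO\bigl((f_j/|N_j|+1)|N_j|\log\omega\bigr)=\cO(f_j\log\omega+\omega\log\omega)$. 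Since each loop iteration selects exactly one vertex, $\sum_{j=1}^{n}f_j=N-1$, and summing over~$j$ gives total sorting time~$\cO(N\log\omega+n\omega\log\omega)$. Adding everything up, the running time is $\cO(n^2+N\log\omega+n\omega\log\omega)$. To finish, I would invoke the easy fact that a connected graph on~$n$ vertices has at least $2^{n-1}$ acyclic orientations (fix a spanning tree~$T$: each of its $2^{n-1}$ acyclic orientations extends to one of~$G$ by orienting the remaining edges along a topological order of~$T$, and distinct orientations of~$T$ give distinct orientations of~$G$), so $N\ge2^{n-1}$ dominates both~$n^2$ and~$n\omega\log\omega$; hence the total time is $\cO(N\log\omega)$, which is~$\cO(\log\omega)$ per visited acyclic orientation on average.
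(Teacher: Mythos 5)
Your proof is correct and follows the same amortization argument as the paper's: each sort of~$N_j$ costs~$\cO(|N_j|\log|N_j|)$ and is charged to the $|N_j|$ loop iterations of one zigzag movement of~$j$, giving~$\cO(\log\omega)$ per visit. You are somewhat more careful than the paper's terse proof in that you explicitly absorb the $\cO(n^2)$ initialization cost and the additive $\cO(n\omega\log\omega)$ slack by establishing $|\AO_G|\ge 2^{n-1}$ for connected~$G$, a point the paper leaves implicit.
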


\begin{proof}
Steps~A2, A4, A5 and~A6 clearly take only constant time.
The sorting step~A4 takes time~$\cO(d\log d)$, where $d=|N_j|$ is the degree of the vertex~$j$ in~$D_j$.
This iteration of the main loop is followed by~$d-1$ iterations later iterations of the main loop in which vertex~$j$ is considered but step~A4 is skipped because~$t_j>0$ (specifically, this happens for $t_j\in\{1,\ldots,d-1\}$.
So overall the algorithm visits~$d$ acyclic orientations in time~$\cO(d\log d)$, which is $\cO(\log d)$ on average.
Clearly, we have $d\leq \omega(G)$.
\end{proof}

The space required by Algorithm~A to store the adjacency matrix of~$G$ is clearly~$\cO(n^2)$, and the initialization time spent in step~A1 is~$\cO(n^2)$.
Testing whether an arbitrary graph~$G$ is chordal, and if so computing a perfect elimination ordering for~$G$, can be done in time~$\cO(m+n)$ by lexicographic breadth-first-search~\cite{MR408312}, where $m$ is the number of edges of~$G$.
Clearly, $\cO(m+n)$ is dominated by the initialization time~$\cO(n^2)$ of Algorithm~A.

\section{Acyclic orientations of hypergraphs}
\label{sec:aoh}

In this section we establish our first main result, a Gray code for acyclic orientations of certain hypergraphs (Theorem~\ref{thm:mainhyper} below).
The hypergraphs we consider admit a vertex ordering that we refer to as hyperfect elimination order.
When specialized to graphs, this corresponds to a perfect elimination order, and we recover the Savage-Squire-West construction.
When specialized to graphical building sets, we recover the Gray code for elimination trees of chordal graphs described in~\cite{DBLP:conf/soda/CardinalMM22} (Lemma~\ref{lem:BG-elim}).
The algorithm also applies to chordal building sets (Theorem~\ref{thm:mainbuild}) and yields Hamilton paths on the corresponding hypergraphic polytopes called chordal nestohedra.

\subsection{Flips in acyclic orientations of hypergraphs}

Let $\cH=(V,\cE)$ be a hypergraph, where $\cE\seq 2^V$.
An \emph{orientation} of $\cH$ is a function $h:\cE\to V$ such that $h(A)\in A$ for every $A\in\cE$; see Figure~\ref{fig:hyper}~(a).
We refer to $h(A)$ as the \emph{head} of hyperedge $A$ in the orientation.\footnote{Orientations of hypergraphs have been defined differently in similar contexts. In particular, the definition of hypergraph orientation used by Benedetti, Bergeron, and Machacek~\cite{MR3960512} is more general than ours. In their terminology, we restrict to orientations with heads of size one only. The general definition is most useful for a complete characterization of the faces of the hypergraphic polytope. Rehberg~\cite{rehberg_2021} refers to our definition as a \emph{heading} instead of an orientation.}
A \emph{path} in an orientation~$h$ of~$\cH$ is a sequence~$(v_1,\ldots,v_k)$ of distinct vertices for which there are hyperedges~$A_1,\ldots,A_{k-1}\in\cE$ such that $v_i,v_{i+1}\in A_i$ and $h(A_i)=v_{i+1}$ for all $i=1,\ldots,k-1$.
A~\emph{cycle} is such a path with $k\geq 2$ and the additional property that $v_k,v_1\in A_k$ and $h(A_k)=v_1$ for some hyperedge~$A_k\in\cE$.
By this definition a loop does not count as a cycle.
An orientation of~$\cH$ is \emph{acyclic} if it does not contain any cycles.
Equivalently, $\cH$ is acyclic if the digraph formed by all arcs~$i\rightarrow j$ for every pair of distinct vertices~$i,j\in V$ with $i,j\in A$ and $j=h(A)$ for some hyperedge~$A\in \cE$ is acyclic; see Figure~\ref{fig:hyper}~(b).
We write $\AO_\cH$ for the set of all acyclic orientations of the hypergraph~$\cH$.

Given an orientation~$h$ of a hypergraph~$\cH=([n],\cE)$, a \emph{pair flip} involves a pair of distinct vertices~$(i,j)$, $i,j\in [n]$, and maps an orientation~$h$ to a distinct orientation~$h'$ such that
\begin{equation}
\label{eq:pair-flip}
h'(A) :=
\begin{cases}
  i & \text{\ if\ } h(A) = j \text{\ and\ } i\in A, \\
  h(A) & \text{\ otherwise,}
\end{cases}
\end{equation}
for all $A\in\cE$; see Figure~\ref{fig:hyper2}~(c).
Note that in order for~$h'$ to be distinct from~$h$, the definition requires that there must exist a hyperedge~$A\in\cE$ with $h(A)=j$ (which then satisfies $h'(A)=i$).
We define a \emph{flip graph} on $\AO_\cH$ by joining two acyclic orientations of~$\cH$ with an edge if and only if they differ in a pair flip.

The following lemma identifies flippable pairs in an acyclic orientation of a hypergraph.
It generalizes the situation for acyclic orientations of graphs, where flippable arcs were precisely the arcs in the transitive reduction.
An acyclic orientation~$h$ of a hypergraph~$\cH=([n],\cE)$ yields a poset on~$[n]$ defined as the transitive closure of the relation
\begin{equation*}
i\prec j \Longleftrightarrow \text{there exist a hyperedge $A\in \cE$ with $i,j\in A$ and $j=h(A)$}.
\end{equation*}
We denote this poset by $P_{\cH,h}$, and we write $i\prec j$ to express comparabilities in this poset in infix notation; see Figure~\ref{fig:hyper}~(c).
Note that if the pair~$(\cH,h)$ is a simple digraph~$D$, then the poset~$P_{\cH,h}$ is the poset defined by the transitive closure of~$D$.

\begin{lemma}
\label{lem:flippable}
A pair~$(i,j)$ is flippable in an acyclic orientation~$h$ of~$\cH$ if and only if $j$ covers~$i$ in the poset~$P_{\cH,h}$.
\end{lemma}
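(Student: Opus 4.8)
The plan is to work with the digraph $D$ whose arcs are all $a\to b$ with $a,b\in A$ and $b=h(A)$ for some $A\in\cE$ (so that $P_{\cH,h}$ is the transitive closure of $D$), together with the analogous digraph $D'$ obtained from the flipped orientation $h'$, and to pin down exactly which arcs are created and destroyed by the pair flip $(i,j)$. The key preliminary observation is that the flip only ever changes a head from $j$ to $i$: if $h(A)\neq j$ then $h'(A)=h(A)$. Tracing this through the definition of the arcs induced by a hyperedge yields two clean facts: every arc of $D$ that is absent from $D'$ must point into $j$, and every arc of $D'$ that is absent from $D$ must point into $i$. I will also record that a hyperedge induces the arc $i\to j$ in $D$ precisely when $h(A)=j$ and $i\in A$ — i.e.\ precisely the hyperedges the flip modifies — so that $i\to j\notin D'$, while $j\to i\in D'$ whenever the flip is non-trivial (i.e.\ $h'\neq h$). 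These structural facts carry almost all of the argument.

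For the implication ``$j$ covers $i$ $\Rightarrow$ $(i,j)$ is flippable'': since $i\prec j$, there is a chain $i=v_0\prec v_1\prec\cdots\prec v_m=j$ of relations each witnessed directly by a hyperedge; taking $m$ minimal and using that $j$ covers $i$ forces $m=1$, which gives a hyperedge $A$ with $i,j\in A$ and $h(A)=j$, hence $h'\neq h$. It remains to show $h'$ is acyclic. If $D'$ had a directed cycle, it would have to use an arc absent from $D$, and since all such arcs point into $i$ the cycle visits $i$ and uses exactly one of them, namely the arc entering $i$; writing the cycle as $i\to w_1\to\cdots\to w_k\to i$, all arcs but the last lie in $D$, and the last comes from a modified hyperedge $B$ with $h(B)=j$ and $w_k\in B$. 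The $2$-cycle $i\to j\to i$ is impossible because $i\to j\notin D'$; in every other case one of $w_1$ (when $w_k=j$) or $w_k$ (when $w_k\neq j$, using that then $B$ also gives the arc $w_k\to j$ in $D$) turns out to be an element strictly between $i$ and $j$ in $P_{\cH,h}$, contradicting that $j$ covers $i$.

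For the converse, assume $(i,j)$ is flippable, so $h'\neq h$ — giving a hyperedge $A^{*}$ with $i,j\in A^{*}$, $h(A^{*})=j$, hence $i\prec j$ and $j\to i\in D'$ — and $h'$ is acyclic. Suppose $j$ does not cover $i$: there is $z\notin\{i,j\}$ with $i\prec z\prec j$, and we fix simple directed paths from $i$ to $z$ and from $z$ to $j$ in $D$. Since the arcs that disappear all point into $j$, and since the first path cannot pass through $j$ (else $z\prec j\prec z$ in $D$), that path survives in $D'$; in the second path only its last arc $y\to j$ can be lost. If $y\to j$ survives, concatenating the two paths with $j\to i$ gives a closed walk in $D'$; if $y\to j$ is lost, then every hyperedge witnessing it contains $i$, so after the flip one of them instead induces $y\to i$, and concatenating the $i$-to-$z$ path and the $z$-to-$y$ portion with $y\to i$ again gives a closed walk in $D'$. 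Either way $D'$ contains a directed cycle, contradicting acyclicity; hence $j$ covers $i$.

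The only real difficulty I foresee is a bookkeeping one: in a hypergraph several hyperedges may induce the same arc, so ``$i\to j$ is an arc of $D$'' is not literally the same as ``some hyperedge has head $j$ and contains $i$''. This is exactly why it pays to isolate the ``destroyed arcs point into $j$, created arcs point into $i$'' lemma first; with that in hand, both directions reduce to the short path-tracing arguments sketched above.
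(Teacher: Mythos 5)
Your proof is correct and follows essentially the same approach as the paper's: both directions proceed by contradiction, tracing how paths and cycles in the induced digraph change under the pair flip. You make explicit the key structural observation---that destroyed arcs point into $j$ and created arcs point into $i$---which the paper leaves implicit, and your case split in the forward direction (whether the last arc $y\to j$ of the chosen path survives the flip) is a clean reformulation of the paper's somewhat terser case distinction on $v_{k-1}$.
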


\begin{proof}
$(\Leftarrow)$ Suppose that $j$ covers~$i$ in~$P_{\cH,h}$ and let $h'$ be the orientation obtained after flipping the pair~$(i,j)$.
Suppose for the sake of contradiction that $h'$ is not acyclic.
Then there must exist a path~$(v_1,\ldots,v_k)$, $k\geq 3$, with $v_1=i$ and $v_k=j$ in the orientation~$h$ of~$\cH$.
But this implies that the relation~$i\prec j$ is obtained by transitivity, i.e., $j$ does not cover~$i$ in~$P_{\cH,h}$.

$(\Rightarrow)$ Suppose that $(i,j)$ is flippable, and suppose for the sake of contradiction that $j$ does not cover $i$ in~$P_{\cH,h}$.
Then the relation $i\prec j$ is obtained by transitivity, i.e., there is a path $(v_1,\ldots,v_k)$, $k\geq 3$, with $v_1=i$ and $v_k=j$ in the orientation~$h$ of~$\cH$.
After flipping~$(i,j)$, this path creates a cycle in the resulting orientation~$h'$.
Specifically, if there is a hyperedge~$A\in\cE$ with $i,j\in A$ and $v_{k-1}\in A$, then $(v_1,\ldots,v_{k-1})$ is a cycle in~$h'$, and otherwise $(v_1,\ldots,v_k)$ is a cycle in~$h'$.
\end{proof}

Note that we can define a surjective map $S_n\to\AO_\cH$.
For a permutation $\pi\in S_n$, every hyperedge $A\in\cE$ can be oriented towards $h(A):=\argmax_{i\in A} \pi^{-1}(i)$, i.e., towards the element from~$A$ that appears rightmost in~$\pi$.
Every acyclic orientation $h\in\AO_\cH$ can be obtained in this way.
Any linear extension $\pi\in\ext (P_{\cH,h})$ is such that orienting the hyperedges according to $\pi$ yields the orientation~$h$.
The set $\{\ext (P_{\cH,h}) \mid h\in\AO_\cH\}$ is therefore a partition of~$S_n$ into equivalence classes.

\subsection{Hypergraphic polytopes}

Generalizing the situation for acyclic orientations of graphs described in Section~\ref{sec:zono}, the acyclic orientations of a hypergraph~$\cH$ are in one-to-one correspondence with the vertices of a polytope associated with~$\cH$, and the flip graph on acyclic orientations is isomorphic to the skeleton of this polytope.

The \emph{hypergraphic polytope}~\cite{MR3960512,aguiar_ardila_2017} of a hypergraph $\cH=([n],\cE)$ can be defined as a Minkowski sum of simplices.
Specifically, with a subset $S\seq [n]$, we associate the standard simplex $\Delta_S := \conv \{e_i \mid i\in S\}$, and the hypergraphic polytope $Z(\cH)$ of~$\cH$ can be defined as $Z(\cH)=\sum_{A\in\cE} \Delta_A$.
Hypergraphic polytopes can also be defined as convex hulls of in-degree sequences.
For an orientation~$h$ of the hypergraph~$\cH$, let $d^-(i):=|\{A\in\cE \mid h(A)=i\}|$ be the \emph{in-degree} of vertex~$i$.
The vector $\delta_{\cH,h}:=(d^-(1),\ldots,d^-(n))\in\mathbb{N}^n$ is the \emph{in-degree sequence} of~$h$.
Then the hypergraphic polytope of~$\cH$ is
\begin{equation}
\label{eq:hypervert}
Z(\cH)=\conv \{\delta_{\cH,h} \mid \text{$h$ orientation of $\cH$} \}.
\end{equation}
Again, this definition does not change if we require the orientations~$h$ to be acyclic.
A proof of these facts is implicit in previous works~\cite{MR3960512} and spelled out by Rehberg~\cite{rehberg_2021} (Proposition 3.9).

The following lemma is a special case of Theorem~2.18 in~\cite{MR3960512}.

\begin{lemma}[\cite{MR3960512}]
\label{lem:hyperflip}
The flip graph on acyclic orientations~$\AO_\cH$ of a hypergraph~$\cH$ is isomorphic to the skeleton of its hypergraphic polytope~$Z(\cH)$.
\end{lemma}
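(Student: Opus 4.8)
The plan is to establish the isomorphism directly from the Minkowski-sum description $Z(\cH)=\sum_{A\in\cE}\Delta_A$, using only elementary face calculus for sums of simplices; this also reproves the relevant case of~\cite[Thm.~2.18]{MR3960512}. Two things need to be checked: that $h\mapsto\delta_{\cH,h}$ is a bijection from $\AO_\cH$ onto the vertex set of $Z(\cH)$, and that two acyclic orientations $h,h'\in\AO_\cH$ differ by a pair flip exactly when $\delta_{\cH,h}$ and $\delta_{\cH,h'}$ are the endpoints of an edge of $Z(\cH)$. For the bijection, surjectivity onto the vertices is essentially~\eqref{eq:hypervert}: a generic linear functional $w$ has a unique maximizer $e_{h^w(A)}$ on each summand $\Delta_A$, where $h^w(A):=\argmax_{k\in A}w_k$, so its unique maximizer on $Z(\cH)$ is $\delta_{\cH,h^w}$, and $h^w$ is acyclic (order by $w$). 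For injectivity, if $h\ne h'$ are acyclic with $\delta_{\cH,h}=\delta_{\cH,h'}$, consider the digraph on $[n]$ with one arc from $h'(A)$ to $h(A)$ for every hyperedge $A$ on which $h$ and $h'$ disagree (parallel arcs allowed). Since the total in-degrees of $h$ and $h'$ coincide and they agree on the remaining hyperedges, this digraph is nonempty and balanced, hence contains a directed cycle $v_0\to v_1\to\cdots\to v_k=v_0$; but each arc comes from a hyperedge witnessing $v_{t-1}\prec v_t$ in $P_{\cH,h}$, so $v_0\prec v_0$, contradicting that $h$ is acyclic.

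For the forward direction of the second claim, suppose $h'$ arises from $h$ by the pair flip $(i,j)$ of~\eqref{eq:pair-flip}. By Lemma~\ref{lem:flippable}, $j$ covers $i$ in $P_{\cH,h}$, so there is a linear extension $\pi$ of $P_{\cH,h}$ in which $i$ immediately precedes $j$ (identify $i$ and $j$, take a linear extension of the resulting poset, then split them again). Choose weights $w\in\mathbb{R}^n$ that strictly increase along $\pi$, except that $i$ and $j$ receive the same weight. A short case distinction on a hyperedge $A$ then shows: if $i\notin A$, or if $i\in A$ but $h(A)\ne j$, then $\argmax_{k\in A}w_k=\{h(A)\}=\{h'(A)\}$; whereas if $i\in A$ and $h(A)=j$ (equivalently $\{i,j\}\subseteq A$ and $h(A)=j$), then $\argmax_{k\in A}w_k=\{i,j\}$ and $h'(A)=i$. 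Since the pair flip is valid, the second type of hyperedge occurs at least once, so the $w$-maximal face of $Z(\cH)$ equals $u+\conv\{m\,e_i,m\,e_j\}$ for a fixed vector $u$ and some $m\ge 1$; this is a segment, with endpoints $\delta_{\cH,h}$ (all these summands contributing $e_j$) and $\delta_{\cH,h'}$ (all contributing $e_i$). Hence the two vertices are adjacent.

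For the converse, let $\delta_{\cH,h}$ and $\delta_{\cH,h'}$ span an edge of $Z(\cH)$, realized as the $w$-maximal face for some $w$. Each summand contributes its $w$-maximal face $\conv\{e_k\mid k\in\argmax_A w\}$, and a Minkowski sum of polytopes has dimension equal to that of the sum of their direction spaces; since that sum is one-dimensional here, every direction space lies on a common line $\mathbb{R}(e_i-e_j)$, so each $\argmax_A w$ is either a singleton or exactly $\{i,j\}$. Let $\cE_{ij}$ be the set of hyperedges of the latter kind; it is nonempty since $h\ne h'$, and then $w_i=w_j$. A direct computation with in-degree sequences shows that the two endpoints of the edge are the orientations that coincide off $\cE_{ij}$ and are constant $j$, respectively constant $i$, on $\cE_{ij}$; up to swapping, $h$ is the first and $h'$ the second. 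Finally, any hyperedge $A$ with $h(A)=j$ and $i\in A$ has $\{i,j\}\subseteq\argmax_A w$ (using $w_i=w_j$), hence $A\in\cE_{ij}$; so $h'$ is precisely the pair flip of $h$ along $(i,j)$, completing the equivalence. Combining this with the bijection above yields the claimed graph isomorphism.

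I expect the main obstacle to be not a single hard step but the bookkeeping in the two case analyses — in particular, ensuring that the family of hyperedges that are \emph{ambiguous} under the chosen functional $w$ coincides exactly with the family of hyperedges \emph{redirected} by the pair flip. This is the point where ``adjacent in $Z(\cH)$'' matches the precise notion of pair flip in~\eqref{eq:pair-flip} rather than some coarser reorientation; the rest reduces to standard facts about faces of Minkowski sums of simplices.
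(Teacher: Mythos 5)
The paper itself does not prove this lemma; it cites it as a special case of~\cite[Thm.~2.18]{MR3960512}, and for the vertex characterization it refers to~\cite{rehberg_2021}. Your proposal therefore genuinely diverges from the paper in that it supplies a self-contained argument, and that argument is correct. You establish the vertex bijection by (i) producing, for a generic functional~$w$, the acyclic orientation~$h^w(A)=\argmax_{k\in A}w_k$ as the unique maximizer on each summand simplex, which shows every vertex of $Z(\cH)$ has the form $\delta_{\cH,h}$ with $h$ acyclic, and (ii) proving injectivity via an Eulerian/balance argument: if $\delta_{\cH,h}=\delta_{\cH,h'}$ with $h\ne h'$ acyclic, the multidigraph of arcs $h'(A)\to h(A)$ over the disagreeing hyperedges is balanced and nonempty, hence has a directed cycle, which contradicts acyclicity of $h$ via~$P_{\cH,h}$. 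For the edge correspondence in the forward direction you pick a linear extension of $P_{\cH,h}$ in which $i$ immediately precedes $j$ (using that $j$ covers $i$, via Lemma~\ref{lem:flippable}), tie the weights $w_i=w_j$, and read off that the $w$-maximal face of $Z(\cH)$ is exactly the segment $[\delta_{\cH,h},\delta_{\cH,h'}]$; in the converse you use that the direction space of a Minkowski sum is the sum of the direction spaces, forcing all nontrivial argmax sets to equal one common pair $\{i,j\}$, and then check that the set $\cE_{ij}$ of ambiguous hyperedges is precisely the set redirected by the pair flip. Every step checks out; the only thing left implicit is the cosmetic observation that $\delta_{\cH,h}$ really is a vertex for every acyclic $h$ (so the map lands in, not merely onto, the vertex set), but this follows at once by the same construction — take a linear extension of $P_{\cH,h}$ and a generic increasing~$w$, for which $h^w=h$. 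What the paper's citation buys is brevity; what your proof buys is a self-contained, elementary derivation from the Minkowski-sum description with no reliance on the general orientation framework of~\cite{MR3960512}, at the cost of the face-calculus bookkeeping you already flagged.
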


\subsection{Hypergraphs in hyperfect elimination order}

We now generalize the notion of perfect elimination order to hypergraphs~$\cH$, and prove that this order is a necessary and sufficient condition for the unique parent-child property to hold in the poset~$P_{\cH,h}$ of any acyclic orientation $h\in\AO_\cH$.

\subsubsection{Hyperfect elimination order}
\label{sec:hyperfect}

For a hypergraph~$\cH=([n],\cE)$ and an integer~$i\leq n$, we write $\cH_i$ for the subgraph of $\cH$ induced by the vertices in~$[i]$.

$\cH=([n],\cE)$ is in \emph{hyperfect elimination order} if either $n=0$, or $n\geq 1$ and the following two conditions are satisfied:
\begin{enumerate}[label=(\roman*),leftmargin=8mm, noitemsep, topsep=1pt plus 1pt]
\item \label{itm:heo1}
For any two hyperedges $A,B\in\cE$ with $n\in A\cap B$ and any two distinct vertices $a\in A-n$, $b\in B-n$ there is a hyperedge~$X\in\cE$ such that $\{a,b\} \seq X \seq (A \cup B)-n$;
\item \label{itm:heo2}
$\cH_{n-1}$ is in hyperfect elimination order.
\end{enumerate}

We emphasize that the hyperedges~$A,B\in\cE$ in this definition are not required to be distinct, i.e., this condition must hold also for hyperedges~$A=B$.

Observe that if the hypergraph is a graph, thus if all hyperedges have size two, then the first condition states that the neighbors of~$n$ must be pairwise adjacent, hence that $n$ is simplicial.
The hyperfect elimination order is therefore a generalization of the perfect elimination order for chordal graphs.
(For other generalizations of perfect elimination orders to hypergraphs, see e.g.~\cite{MR2603461}.)

\subsubsection{Unique parent-child property for hypergraphs}

We now generalize Lemma~\ref{lem:upc-peo} to hypergraphs in hyperfect elimination order.

We say that a hypergraph $\cH=([n],\cE)$ has the \emph{unique parent-child property} if either $n=0$, or $n\geq 1$ and the following two conditions are satisfied:
\begin{enumerate}[label=(\roman*),leftmargin=8mm, noitemsep, topsep=1pt plus 1pt]
\item for every acyclic orientation~$h\in\AO_\cH$ the vertex~$n$ covers at most one element and is covered by at most one element in~$P_{\cH,h}$;
\item the hypergraph $\cH_{n-1}$ has the unique parent-child property.
\end{enumerate}

\begin{lemma}
\label{lem:upc-heo}
A hypergraph $\cH=([n],\cE)$ has the unique parent-child property if and only if it is in hyperfect elimination order.
\end{lemma}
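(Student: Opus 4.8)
The plan is to prove both directions of the equivalence by induction on~$n$, following the same pattern as the proof of Lemma~\ref{lem:upc-peo} but adapting the clique argument to the hyperedge-covering condition~\ref{itm:heo1}. The inductive clause in both definitions is identical ($\cH_{n-1}$ in hyperfect elimination order, resp.\ $\cH_{n-1}$ has the unique parent-child property), so the base case $n=0$ is trivial and the entire content of the argument is showing, for $n\geq 1$, that condition~\ref{itm:heo1} of the hyperfect elimination order for the vertex~$n$ is equivalent to clause~(i) of the unique parent-child property for the vertex~$n$, for every $h\in\AO_\cH$.

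For the forward direction $(\Leftarrow)$, assume $\cH$ is in hyperfect elimination order and let $h\in\AO_\cH$. I want to show $n$ covers at most one element and is covered by at most one element in $P_{\cH,h}$. Consider the set $C$ of hyperedges $A$ with $n\in A$ and $h(A)=n$; the vertices appearing in these hyperedges (other than~$n$) are exactly the vertices~$i$ with $i\prec n$ via a single hyperedge. I claim that these vertices form a chain in~$P_{\cH,h}$ — which immediately forces $n$ to cover at most one of them. Indeed, take two distinct such vertices $a\in A-n$, $b\in B-n$ with $A,B\in C$. By~\ref{itm:heo1} there is a hyperedge~$X$ with $\{a,b\}\seq X\seq (A\cup B)-n$; since $h(X)\in X$ and $h(X)\in (A\cup B)-n\seq [n-1]$, the head $h(X)$ is comparable in~$P_{\cH,h}$ to both $a$ and~$b$ (it is $\succeq$ each of them via that single hyperedge), which by acyclicity of~$h$ and $a\neq b$ forces $a\prec b$ or $b\prec a$. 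Hence the down-set of~$n$ restricted to one-step relations is totally ordered, so $n$ covers at most one element. The dual statement — that $n$ is covered by at most one element — follows by applying the \emph{same} argument with $A=B$ ranging over all hyperedges containing~$n$ with $h(A)\neq n$: condition~\ref{itm:heo1} with $A=B$ says that any two vertices $a,b\in A-n$ lie together in some hyperedge $X\seq A-n$, whose head is again comparable to both, making $A-n$ a chain; taking the union over all such $A$ and using that two heads-to-be are placed consistently gives a chain among all out-neighbors-in-$P$ of~$n$. This last bookkeeping — stitching together chains coming from different hyperedges into one global chain — is where some care is needed, but~\ref{itm:heo1} in its full generality (allowing $A\neq B$) is exactly tailored to supply it.

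For the contrapositive direction $(\Rightarrow)$, assume $\cH$ is \emph{not} in hyperfect elimination order; I may assume (using the inductive clause and shrinking~$n$ if necessary) that it is condition~\ref{itm:heo1} that fails at the vertex~$n$. So there are hyperedges $A,B\in\cE$ with $n\in A\cap B$ and distinct vertices $a\in A-n$, $b\in B-n$ such that no hyperedge~$X$ satisfies $\{a,b\}\seq X\seq (A\cup B)-n$. The goal is to build an explicit acyclic orientation $h\in\AO_\cH$ in which $n$ covers (or is covered by) both~$a$ and~$b$, violating clause~(i). Mimicking the construction in the proof of Lemma~\ref{lem:upc-peo}: orient every hyperedge containing~$n$ towards~$n$ (so $h(A)=h(B)=n$), orient every other hyperedge containing~$a$ towards~$a$, every other hyperedge containing~$b$ towards~$b$, and orient all remaining hyperedges towards their maximum-indexed vertex. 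One checks this $h$ is acyclic — the ``max'' rule handles everything not touching $\{a,b,n\}$, vertices $a,b$ are local sinks except for the two arcs into~$n$, and $n$ is a global sink — so $h\in\AO_\cH$. Then $a\prec n$ and $b\prec n$ both hold. It remains to argue that $n$ covers \emph{both}~$a$ and~$b$, i.e.\ neither relation is implied by transitivity through the other vertex. If, say, $n$ did not cover~$a$, there would be a path $(v_1,\dots,v_k)$, $k\geq 3$, with $v_1=a$, $v_k=n$; the step into~$v_k=n$ uses some hyperedge $C\ni n$ with $h(C)=v_{k-1}\neq n$ — but by our orientation every hyperedge containing~$n$ has head~$n$, contradiction. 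So $n$ covers~$a$, and symmetrically $n$ covers~$b$; with $a\neq b$ this violates clause~(i) of the unique parent-child property. (The reader should double-check that the failure of~\ref{itm:heo1} was genuinely needed: if such an~$X$ existed, then in \emph{every} acyclic orientation with $a,b\prec n$ the head of~$X$ would be comparable to both and would mediate the relation, so at most one of $a,b$ could be a cover of~$n$ — which is precisely the mechanism of the $(\Leftarrow)$ direction.)

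**Main obstacle.** The routine parts are the induction skeleton and the explicit acyclic orientation in $(\Rightarrow)$, which transcribe the graph case almost verbatim. The genuinely new point, and the step I expect to demand the most care, is the $(\Leftarrow)$ direction: turning the ``local'' hypothesis~\ref{itm:heo1} (any two neighbors-of-$n$ coming from two hyperedges sit together in a common smaller hyperedge, whose head is then forced to sit between them) into the ``global'' conclusion that \emph{all} the one-step predecessors of~$n$ form a single chain, and likewise for successors. One must be careful that the comparabilities produced by the various mediating hyperedges~$X$ are mutually consistent and that acyclicity of~$h$ is invoked correctly to upgrade ``$h(X)$ is $\succeq$ both $a$ and $b$'' into ``$a$ and $b$ are themselves comparable''; the clean way is to observe that the set $\{v : v\prec n\text{ via a single hyperedge}\}$ is, by repeated application of~\ref{itm:heo1}, directed under~$\prec$ inside the acyclic poset~$P_{\cH,h}$, hence a chain, so it has a unique maximum, which is the unique element covered by~$n$.
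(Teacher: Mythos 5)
Your overall plan — induction, reduce to the $n$th vertex, and show condition~\ref{itm:heo1} is equivalent to clause~(i) of the unique parent-child property — is the right skeleton and matches the paper. But there are two genuine gaps, one in each direction.

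\textbf{The chain claim in $(\Leftarrow)$ is false.}
You assert that the set of vertices~$v$ with~$v\prec n$ via a single hyperedge is a chain in~$P_{\cH,h}$. The paper itself gives a counterexample in the remark following this lemma: take $\cH=([4],\{12,123,1234\})$ and $h$ with $h(12)=h(123)=1$ and $h(1234)=4$. Both~$2$ and~$3$ are one-step predecessors of~$4$ via $1234$, yet $2$ and~$3$ are incomparable. The specific inference ``$h(X)\succeq a$ and $h(X)\succeq b$, which by acyclicity forces $a\prec b$ or $b\prec a$'' is simply a non-sequitur: two elements below a common upper bound in a poset need not be comparable, and directedness does not give a chain. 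The conclusion ``$n$ covers at most one element'' is nevertheless true, but the argument is different: one distinguishes whether $h(X)\in\{a,b\}$ (giving $a\prec b$ or $b\prec a$) or not (in which case $h(X)$ is a strict mediator, so $a\prec h(X)\prec n$ and $b\prec h(X)\prec n$ and $n$ covers neither). The dual claim (at most one cover of~$n$) is handled in the paper by a genuinely different trick — showing that $h(X)$ \emph{must} lie in $\{a,b\}$, because otherwise $h(A)=a$ and $h(X)\in A$ would give $h(X)\prec a$ and $a\prec h(X)$ simultaneously. Your ``dual'' argument (that $A-n$ is a chain, then stitching chains together) again relies on the false chain claim and does not carry through.

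\textbf{The explicit orientation in $(\Rightarrow)$ does not work.}
Your orientation rule is ambiguous for hyperedges containing both~$a$ and~$b$ but not~$n$, and more importantly it fails to guarantee that~$n$ covers both. Consider $\cE=\{\{1,2,6\},\{1,3,6\},\{2,3,5\}\}$ on $[6]$, $n=6$, $A=\{1,2,6\}$, $B=\{1,3,6\}$, $a=2$, $b=3$: condition~\ref{itm:heo1} fails, but your rule sends $\{2,3,5\}$ to $2$ (or $3$), producing $3\prec 2\prec 6$ (or $2\prec 3\prec 6$), so~$6$ does \emph{not} cover both. Moreover, your verification that ``$n$ covers $a$'' reverses the direction of the path: for the step from~$v_{k-1}$ into~$v_k=n$ the defining condition is $h(C)=v_k=n$, not $h(C)=v_{k-1}$, so ``every hyperedge containing~$n$ has head~$n$'' is \emph{consistent} with such a path, not a contradiction. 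The paper sidesteps all of this by defining a permutation~$\pi$ with $(A\cup B)\setminus\{a,b,n\}$ before~$a<b<n$ and $[n]\setminus(A\cup B)$ after~$n$, taking $h:=h_\pi$; then the only possible mediator between~$a$ and~$n$ is~$b$ itself, and $b$ covering~$a$ would yield exactly a hyperedge~$X$ witnessing~\ref{itm:heo1}, contradiction. Any repair of your argument will need to reproduce this discipline about where vertices outside~$A\cup B$ are placed.
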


\begin{proof}
$(\Leftarrow)$
Suppose that $\cH$ is in hyperfect elimination order.
Let $h$ be an acyclic orientation of $\cH$ and $a,b\in [n-1]$ two distinct vertices.
We first show that $n$ cannot cover~$a$ and~$b$ in~$P_{\cH,h}$.
Suppose for the sake of contradiction that $n$ covers both~$a$ and~$b$ in~$P_{\cH,h}$.
This means there are hyperedges $A,B\in \cE$ such that $\{n,a\}\seq A$, $\{n,b\}\seq B$ and $h(A)=h(B)=n$.
By the definition of hyperfect elimination order, it follows that there is a hyperedge~$X \in \cE$ such that $\{a,b\}\seq X \seq (A\cup B)-n$.
Note that $h(X)\notin \{a,b\}$; otherwise we have either $a\prec b$ or~$b\prec a$ which implies that $n$ cannot cover both~$a$ and~$b$, a contradiction.
As $h(X) \in (A\cup B)-n$, we also conclude that $h(X)\prec n$.
Thus, $a\prec h(X)\prec n$ and $b\prec h(X)\prec n$ which means that $n$ does not cover~$a$ nor~$b$, a contradiction.

We now show that $n$ cannot be covered by~$a$ and~$b$.
Suppose for the sake of contradiction that $n$ is covered by both~$a$ and~$b$ in~$P_{\cH,h}$.
This means there are hyperedges $A,B \in \cE$ such that $\{n,a\}\seq A$, $\{n,b\}\seq B$, $h(A)=a$ and $h(B)=b$.
By the definition of hyperfect elimination order, it follows that there is a hyperedge~$X \in \cE$ such that $\{a,b\}\seq X \seq (A\cup B)-n$.
Note that if $h(X)\in A$, then we must have~$h(X)=a$.
Indeed, if $h(X)\neq a$, then we would have $a\prec h(X)$ and $h(X)\prec a$ as $h(A)=a$ and $h(X)\in A$, a contradiction.
Symmetrically, if $h(X)\in B$, then we must have~$h(X)=b$.
It follows that $h(X)\in \{a,b\}$.
Hence, we either have $a\prec b$ or $b\prec a$, which implies that $n$ cannot be covered by both~$a$ and~$b$, a contradiction.

Combining these observations, we obtain that $n$ covers at most one element and is covered by at most one element in~$P_{\cH,h}$.
By iterating this argument for~$\cH_{n-1}$, we conclude that $\cH$ has the unique parent-child property.

$(\Rightarrow)$
Suppose that $\cH$ has the unique parent-child property, and suppose for the sake of contradiction that $\cH$ is not in hyperfect elimination order.
Without loss of generality, suppose that there are hyperedges $A,B \in \cE$ with $n\in A\cap B$ and two distinct vertices $a \in A-n$, $b \in B-n$ such that
\begin{equation}
\label{eq:heo-neg}
\text{no hyperedge $X\in\cE$ satisfies $\{a,b\}\seq X\seq (A\cup B)-n$.}
\end{equation}
We construct an acyclic orientation~$h$ of~$\cH$ such that $n$ covers at least two elements in~$P_{\cH,h}$.
To do so, recall that a permutation~$\pi$ of $[n]$ induces an acyclic orientation~$h_\pi$ by defining $h_\pi(X):=\argmax_{i \in X} \pi^{-1}(i)$, and that~$\pi$ is a linear extension of~$P_{\cH,h_{\pi}}$.

Let $R:=(A \cup B) \setminus \{a,b,n\}$ and $S:=[n]\setminus (A\cup B)$, and note that $R,S,\{a\},\{b\},\{n\}$ is a partition of~$[n]$.
Consider the permutation~$\pi$ on~$[n]$ defined by
\begin{equation}
\label{eq:piRS}
\pi^{-1}(r)<\pi^{-1}(a)<\pi^{-1}(b)<\pi^{-1}(n)<\pi^{-1}(s)
\end{equation}
for all $r\in R$ and~$s\in S$, and let $h:=h_\pi$ be the acyclic orientation induced by~$\pi$.
We claim that $n$ covers both~$a$ and~$b$ in~$P_{\cH,h}$.
First note that $a,b\prec n$, as $h(A)=h(B)=n$.
Furthermore, there can be no $x\in[n]\setminus\{a,b\}$ with $a\prec x\prec n$ or $b\prec x\prec n$, as $\pi$ is a linear extension of~$P_{\cH,h}$.
We conclude that $n$ covers~$b$.
Furthermore, $n$ covers~$a$ unless $b$ covers~$a$.
However, if $b$ covers~$a$, then by~\eqref{eq:piRS} there must be a hyperedge~$X\in\cE$ with $\{a,b\}\seq X\seq (A\cup B)-n$ and $h(X)=b$, contradicting~\eqref{eq:heo-neg}.
This completes the proof.
\end{proof}

Note that in the case of graphs in perfect elimination order, the proof of Lemma~\ref{lem:upc-peo} used the fact that the neighbors of $n$ are always totally ordered in~$P_D$ (whose transitive reduction is~$T_D$).
This property is not true for hypergraphs in hyperfect elimination orders.
Consider for instance the hypergraph $\cH=([4],\{12, 123, 1234\})$, which is in hyperfect elimination order.
If we consider the acyclic orientation $h$ such that $h(12)=h(123)=1$ and $h(1234)=4$, we have that~2 and~3 are incomparable in~$P_{\cH,h}$.

\subsection{Generation algorithm}

\subsubsection{Generation of acyclic orientations of hypergraphs}

Using Lemma~\ref{lem:upc-heo}, we can describe a simple recursive algorithm generating a Hamilton path in the flip graph on acyclic orientations of a hypergraph in hyperfect elimination order; see Figure~\ref{fig:tree}.
This algorithm generalizes the Savage-Squire-West construction for chordal graphs~\cite{MR1267311} described in Section~\ref{sec:acyclic}.

\begin{figure}
\centering
\makebox[0cm]{ 
\includegraphics{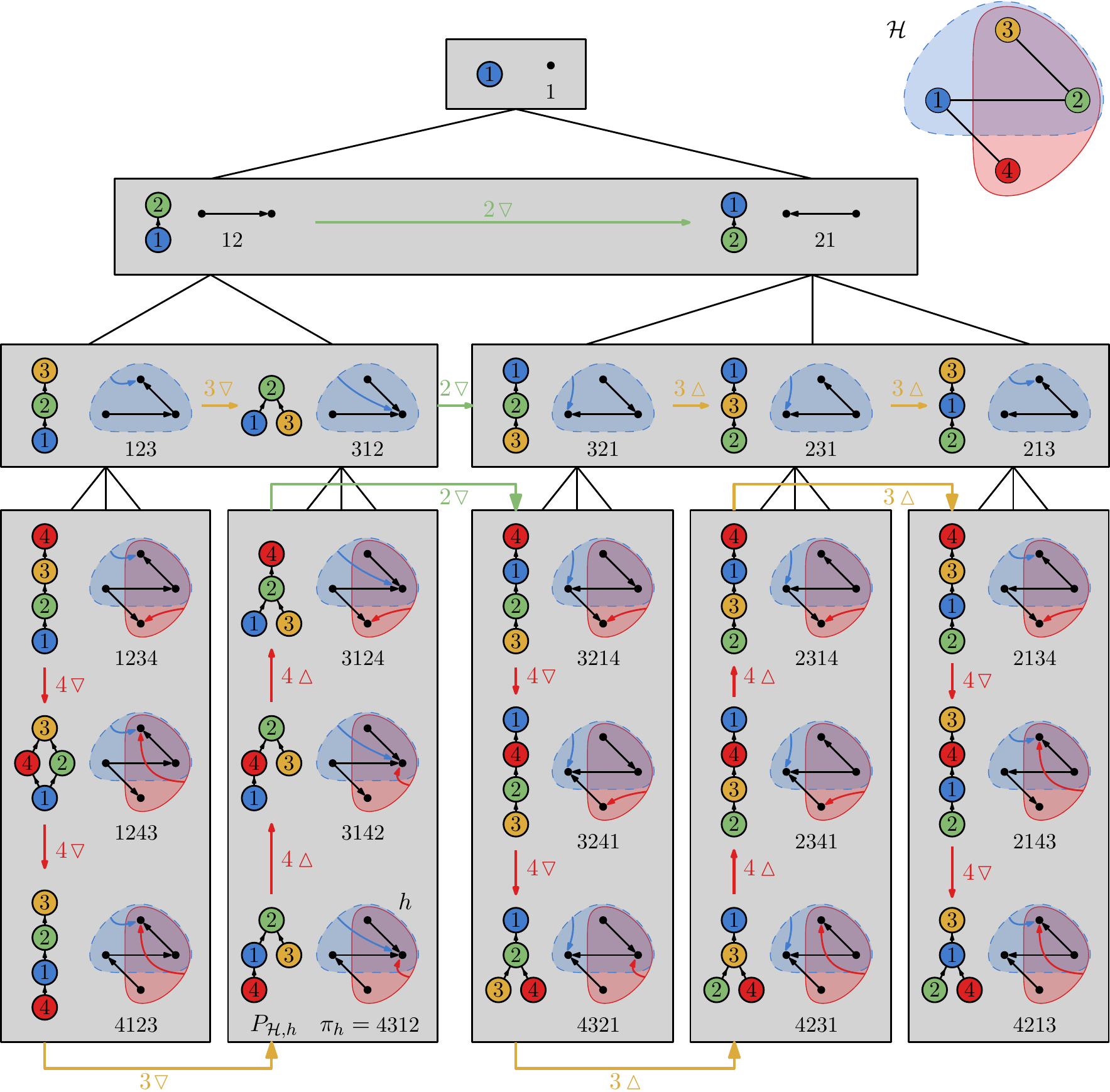}
}
\caption{Gray code of acyclic orientations of a hypergraph~$\cH$ in hyperfect elimination order.
The levels of the tree correspond to the induction steps.
At each step the figure shows the acyclic orientation~$h$ of~$\cH$, and the corresponding poset~$P_{\cH,h}$ and permutation~$\pi_h$.
The notation $j\dird$ indicates a pair flip~$(i,j)$ where $i$ is the unique child of~$j$ in~$P_{\cH_j,h_j}$ by Lemma~\ref{lem:upc-heo}.
Similarly, $j\diru$ indicates a pair flip~$(j,i)$ where $i$ is the unique parent of~$j$ in~$P_{\cH_j,h_j}$.
}
\label{fig:tree}
\end{figure}

We consider a hypergraph $\cH=([n],\cE)$ in hyperfect elimination order, and proceed by induction on~$n$.
The base case $n=0$ is trivial.
For $n\geq 1$, suppose we have a Hamilton path in the flip graph on acyclic orientations of~$\cH_{n-1}$.
Every acyclic orientation $h_{n-1}\in\AO_{\cH_{n-1}}$ can be extended to an acyclic orientation~$h\in\AO_\cH$ in two ways:
\begin{itemize}[leftmargin=5mm, noitemsep, topsep=1pt plus 1pt]
\item for all $A\in\cE$ such that $n\in A$, we define $h(A):=n$, in which case $n$ is maximal in~$P_{\cH,h}$;
\item consider a permutation $\pi\in\ext (P_{\cH_{n-1},h_{n-1}})$ and for all $A\in\cE$ such that $n\in A$, let $h(A):=\argmax_{i\in A-n} \pi^{-1}(i)$, in which case $n$ is minimal in~$P_{\cH,h}$.
\end{itemize}
Furthermore, we can get from the former to the latter, by iteratively flipping $n$ with the unique vertex it covers in this order, until there is no such vertex and $n$ becomes minimal in~$P_{\cH,h}$.
Conversely, we can move $n$ up by iteratively flipping $n$ and the unique vertex that covers it until $n$ becomes maximal in~$P_{\cH,h}$.
We can therefore replace any acyclic orientation in the Hamilton path in the flip graph on $\AO_{\cH_{n-1}}$ by a sequence of acyclic orientations in which vertex $n$ either moves down from maximal to minimal in~$P_{\cH,h}$, or up from minimal to maximal.
These sequences can be concatenated to yield a Hamilton path in the flip graph on $\AO_\cH$, in which $n$ `zigzags' alternately up and down in~$P_{\cH,h}$.

\subsubsection{Algorithm~J for acyclic orientations of hypergraphs}

The recursive algorithm described above can be cast as a special case of Algorithm~J in the Hartung-Hoang-M\"utze-Williams framework~\cite{MR4391718}.

\begin{lemma}
\label{lem:heo2perm}
Let $\cH=([n],\cE)$ be a hypergraph in hyperfect elimination order.
For an orientation $h\in\AO_\cH$, let $h_{n-1}$ be its restriction to~$\cH_{n-1}$.
With any acyclic orientation $h\in\AO_\cH$ we associate a permutation $\pi_h\in S_n$ as follows:
If $n=0$ then $\pi_h:=\varepsilon$, and if $n\geq 1$ we consider three cases:
\begin{enumerate}[label=(\roman*),leftmargin=8mm, noitemsep, topsep=1pt plus 1pt]
\item if the vertex~$n$ is maximal in~$P_{\cH,h}$, then $\pi_h:=c_n(\pi_{h_{n-1}})$;
\item if the vertex~$n$ is minimal in~$P_{\cH,h}$, then $\pi_h:=c_1(\pi_{h_{n-1}})$;
\item otherwise, $\pi_h := c_i(\pi_{h_{n-1}})$, where $i$ is the position in $\pi_{h_{n-1}}$ of the unique vertex that covers~$n$ in~$P_{\cH,h}$.
\end{enumerate}
Then the map $\AO_\cH\to S_n: h\mapsto \pi_h$ is injective, and
\begin{equation}
\label{eq:PiH}
\Pi_\cH := \{ \pi_h \mid h\in\AO_\cH \}
\end{equation}
is a zigzag language of permutations.
\end{lemma}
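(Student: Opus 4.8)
The plan is to prove the three assertions --- that $\pi_h$ is well defined, that the map $h\mapsto\pi_h$ is injective, and that $\Pi_\cH$ is a zigzag language --- simultaneously by induction on $n$, mirroring the proof of Lemma~\ref{lem:peo2perm} with the clique structure of a perfect elimination order replaced by condition~\ref{itm:heo1} of a hyperfect elimination order. The base case $n=0$ is trivial. For the inductive step, condition~\ref{itm:heo2} ensures that $\cH_{n-1}$ is again in hyperfect elimination order, so the induction hypothesis applies to it; moreover the restriction of an acyclic orientation of $\cH$ to $\cH_{n-1}$ is acyclic, and conversely every $g\in\AO_{\cH_{n-1}}$ extends to $\AO_\cH$ by putting $h(A):=n$ for all $A\ni n$. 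Hence $\AO_{\cH_{n-1}}=\{h_{n-1}\mid h\in\AO_\cH\}$, and therefore $\Pi_{\cH_{n-1}}=\{p(\pi)\mid\pi\in\Pi_\cH\}$.

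The heart of the argument is the restriction identity $P_{\cH,h}\big|_{[n-1]}=P_{\cH_{n-1},h_{n-1}}$ for every $h\in\AO_\cH$, used together with the elementary fact that $h(A)$ is the maximum element of $A$ in $P_{\cH,h}$, because every other vertex of $A$ lies directly below $h(A)$. One inclusion of the identity is immediate. For the other I would argue as follows. Every covering relation of $P_{\cH,h}$ is witnessed by a single hyperedge $A$ (a chain of two or more direct relations would supply an element strictly between the endpoints); if $n\notin A$ the relation already belongs to $P_{\cH_{n-1},h_{n-1}}$, and if $n\in A$, condition~\ref{itm:heo1} applied with $A=B$ to the endpoints $u,w\in A-n$ yields a hyperedge $X\seq A-n$ containing both $u$ and $w$, where acyclicity forces $h(X)=w$ --- the value $u$ would create a directed cycle, and a value $k\notin\{u,w\}$, which then lies in $A$, would be below $w=h(A)$ via $A$ yet above $w$ via $X$ --- so again the relation belongs to $P_{\cH_{n-1},h_{n-1}}$. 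A parallel argument using condition~\ref{itm:heo1} with possibly distinct hyperedges $A,B$ satisfying $\{u,n\}\seq A$, $h(A)=n$ and $\{n,w\}\seq B$, $h(B)=w$ shows that if $n$ covers $u$ and is covered by $w$ in $P_{\cH,h}$, then $u\prec w$ already holds in $P_{\cH_{n-1},h_{n-1}}$: the sandwiching hyperedge $X\seq(A\cup B)-n$ must have $h(X)=w$, since any other value $k\in(A\cup B)-n$ would complete a directed cycle through $n$. Refining an arbitrary relation of $P_{\cH,h}$ between vertices of $[n-1]$ into a chain of covering relations and contracting the at most one place where the chain visits $n$ then yields the identity. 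Excluding these ``third head'' configurations is, I expect, the main obstacle.

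Granting the identity, the remaining steps are routine. Case~(iii) of the definition is well posed since Lemma~\ref{lem:upc-heo} gives $\cH$ the unique parent-child property, so a vertex $n$ that is neither minimal nor maximal in $P_{\cH,h}$ is covered by exactly one element. Using the identity together with the inductive fact that $\pi_{h_{n-1}}\in\ext(P_{\cH_{n-1},h_{n-1}})$, a short case analysis of (i)--(iii) shows that $\pi_h\in\ext(P_{\cH,h})$: in (i) and (ii) the vertex $n$ is placed at the right, respectively left, end of the permutation, and in (iii), where the down-set of $n$ in $[n-1]$ is the down-closure of its unique child and the up-set of $n$ in $[n-1]$ is the up-closure of its unique parent $j^\ast$, inserting $n$ immediately before $j^\ast$ --- which is precisely the effect of $c_i$ --- respects every relation involving $n$. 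Injectivity follows at once, since $p(\pi_h)=\pi_{h_{n-1}}$ recovers $h$ on all hyperedges contained in $[n-1]$ by induction, and $h(A)=\argmax_{v\in A}\pi_h^{-1}(v)$ recovers $h$ on the hyperedges containing $n$.

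It remains to verify the zigzag property. As $\Pi_{\cH_{n-1}}$ is a zigzag language by induction, I only need to check condition~(z1) or~(z2). If $n$ lies in no hyperedge of size at least two, then $n$ is isolated in every $P_{\cH,h}$, the map $h\mapsto h_{n-1}$ is a bijection $\AO_\cH\to\AO_{\cH_{n-1}}$, and, using convention~(i) for such a vertex, $\Pi_\cH=\{c_n(\pi)\mid\pi\in\Pi_{\cH_{n-1}}\}$, which is~(z2). Otherwise, fix $g\in\AO_{\cH_{n-1}}$ and $\pi\in\ext(P_{\cH_{n-1},g})$. The orientation setting $h(A):=n$ for all $A\ni n$ (and $h:=g$ on $\cH_{n-1}$) makes $n$ maximal, so it contributes $c_n(\pi_g)$ to $\Pi_\cH$; the orientation setting $h(A):=\argmax_{v\in A-n}\pi^{-1}(v)$ for every $A\ni n$ with $|A|\geq 2$, together with $h(\{n\}):=n$ and $h:=g$ on $\cH_{n-1}$, admits $c_1(\pi)$ as a linear extension of $P_{\cH,h}$, hence is acyclic with $n$ a source and contributes $c_1(\pi_g)$. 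Thus $c_1(\pi_g),c_n(\pi_g)\in\Pi_\cH$ for every $g$, which is~(z1). This closes the induction.
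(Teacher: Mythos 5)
Your proof is correct, and it supplies details the paper itself omits: the paper's own treatment of this lemma consists of a single remark, ``Lemma~\ref{lem:heo2perm} can be proved straightforwardly by induction; we omit the details.'' Your decomposition is the natural one: a simultaneous induction establishing well-definedness, the auxiliary fact $\pi_h\in\ext(P_{\cH,h})$, injectivity, and the zigzag conditions, with the restriction identity $P_{\cH,h}\big|_{[n-1]}=P_{\cH_{n-1},h_{n-1}}$ as the crux. Your two uses of condition~\ref{itm:heo1} --- once with $A=B$ to transfer a cover $u\lessdot w$ witnessed by a hyperedge through $n$, and once with possibly distinct $A,B$ to contract a $u\lessdot n\lessdot w$ configuration --- are exactly what the inductive step needs, and your verification of~(z1) and~(z2) by explicit extensions of any $g\in\AO_{\cH_{n-1}}$ is sound.

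Two small imprecisions worth tightening, neither affecting validity. First, when you list ``the three assertions,'' the membership $\pi_{h_{n-1}}\in\ext(P_{\cH_{n-1},h_{n-1}})$ that you later invoke should be included explicitly among the induction hypotheses, since injectivity rests on it via $h(A)=\argmax_{v\in A}\pi_h^{-1}(v)$. Second, in the contraction step for $u\lessdot n\lessdot w$, you say an alternative head $k$ for $X$ ``would complete a directed cycle through~$n$''; when $k\in B-n$ the offending cycle is $k\prec w\prec k$ and does not pass through~$n$ --- only the case $k\in A\setminus B$ routes through~$n$. Either way one obtains a cycle, so the conclusion $h(X)=w$ stands; just adjust the phrasing.
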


If the vertex~$n$ is isolated in~$P_{\cH,h}$, then it is both maximal and minimal, in which case we use the encoding stated under~(i), and then the special condition~(z2) in the definition of zigzag languages applies.
The definition of the mapping $h\mapsto \pi_h$ is illustrated in Figure~\ref{fig:tree}.
Lemma~\ref{lem:heo2perm} can be proved straightforwardly by induction; we omit the details.

Note that by definition, we have $\pi_h\in \ext (P_{\cH,h})$.
Therefore, the language $\Pi_\cH$ defines a set of representatives for the equivalence classes of permutations in~$S_n$.

\begin{lemma}
\label{lem:algJ-flip}
When running Algorithm~J with input~$\Pi_\cH$ as defined in~\eqref{eq:PiH}, then for any two permutations $\pi_h, \pi_{h'}$ that are visited consecutively, the corresponding acyclic orientations~$h$ and~$h'$ of~$\cH$ are adjacent in the flip graph on~$\AO_\cH$.
\end{lemma}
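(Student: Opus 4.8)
The plan is to translate the jump performed by Algorithm~J on the permutation side into a pair flip on the orientation side, using the inductive/zigzag description of $J(\Pi_\cH)$ from equations~\eqref{eq:JLn12}. First I would recall from Theorem~\ref{thm:jump} that Algorithm~J, run on the zigzag language $\Pi_\cH$, visits exactly the permutations of $\Pi_\cH$, and that by the inductive description the value $n$ is inserted into the successive permutations of $J(\Pi_{\cH_{n-1}})$ in all valid positions, scanned alternately left-to-right and right-to-left. Two consecutive permutations $\pi_h,\pi_{h'}$ of $J(\Pi_\cH)$ therefore fall into one of two types: either (a) they arise from the \emph{same} permutation $\sigma\in J(\Pi_{\cH_{n-1}})$ and differ by moving the symbol $n$ one "step" (i.e., past a maximal run of smaller values) — this is a jump of the value $n$; or (b) $n$ occupies an extremal position (first or last) in both, and the underlying permutations $p(\pi_h)=:\sigma$ and $p(\pi_{h'})=:\sigma'$ are consecutive in $J(\Pi_{\cH_{n-1}})$, so that by induction $\sigma,\sigma'$ differ by a pair flip in $\cH_{n-1}$, while $n$ stays extremal.

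In case (a), by the definition of the encoding $h\mapsto\pi_h$ in Lemma~\ref{lem:heo2perm} and the remark that $\pi_h\in\ext(P_{\cH,h})$, the position of $n$ in $\pi_h$ records the parent-relationship of $n$ in $P_{\cH,h}$: if $n$ is at position $i$ then the value at position $i-1$ (when $i>1$) is the unique vertex covering $n$, by the unique parent-child property (Lemma~\ref{lem:upc-heo}). A right jump of $n$ by $d$ steps over a run $a_{i}\cdots a_{i+d-1}$ with $a_i>a_{i+1},\dots,a_{i+d}$ corresponds, on the orientation side, to the sequence of orientations in which $n$ successively "passes" each of $a_i,\dots,a_{i+d}$; one checks that a single such jump in the Algorithm-J run is in fact a \emph{minimal} jump, meaning $d$ is exactly the length of the maximal run, which — because $\pi_h$ is a linear extension and $n$ covers a unique element — means the jump swaps $n$ past exactly one cover relation in $P_{\cH,h}$. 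Here I would invoke Lemma~\ref{lem:flippable}: the pair $(i,j)$ with $j$ covering $i$ in $P_{\cH,h}$ is precisely flippable, and performing the pair flip $(a_i,n)$ (if $n$ moves up) or $(n,a_{i+d})$ (if $n$ moves down) produces $h'$. One must verify that this pair flip changes only the orientation of hyperedges that "see" the cover relation being reversed, and hence yields exactly $h'$ with $\pi_{h'}$ the jumped permutation; this is the heart of case~(a) and should follow from the definition~\eqref{eq:pair-flip} of pair flip together with the fact that, since $n$ covers a unique element, the hyperedges with head $n$ and containing $a_i$ (resp.\ with head $a_{i+d}$ and containing $n$) are exactly the ones whose head changes.

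In case (b), the symbol $n$ sits at the same extremal position in $\pi_h$ and $\pi_{h'}$, so $h$ and $h'$ assign the same head to every hyperedge containing $n$ (all pointing at $n$ if $n$ is maximal, resp.\ all pointing away if $n$ is minimal — using that $n\in[n]$ is "last" in the hyperfect elimination order). The restrictions $h_{n-1},h'_{n-1}$ to $\cH_{n-1}$ are then consecutive in the Algorithm-J run on $\Pi_{\cH_{n-1}}$, so by the inductive hypothesis they differ by a pair flip $(i,j)$ in $\cH_{n-1}$; since the hyperedges containing $n$ are oriented identically in $h$ and $h'$ and the pair flip $(i,j)$ with $i,j\neq n$ affects only hyperedges not containing $n$ — or, if it does affect a hyperedge $A\ni n$, then $h(A)=h'(A)=n$ forces nothing to change on $A$ — the same pair $(i,j)$ is flippable in $h$ and carries $h$ to $h'$. (One should double-check using Lemma~\ref{lem:flippable} that $j$ still covers $i$ in $P_{\cH,h}$, not merely in $P_{\cH_{n-1},h_{n-1}}$; this holds because adding $n$ as a global maximum or minimum does not create new elements strictly between $i$ and $j$.)

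The main obstacle I anticipate is the careful bookkeeping in case~(a): making precise that an Algorithm-J jump of the value $n$ is always a \emph{minimal} jump over exactly one cover relation of $P_{\cH,h}$, and that the corresponding pair flip touches precisely the right hyperedges. This relies essentially on the unique parent-child property (Lemma~\ref{lem:upc-heo}) — without it, $n$ could cover several elements and a "one-step" jump on the permutation would not correspond to a single pair flip. Everything else reduces to unwinding the encoding of Lemma~\ref{lem:heo2perm}, the inductive description~\eqref{eq:JLn12}, and the flippability criterion of Lemma~\ref{lem:flippable}.
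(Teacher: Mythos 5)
Your split into case~(a) (the value~$n$ is the one that jumps) and case~(b) ($n$ stays at an extremal position while a smaller value jumps in $J(\Pi_{\cH_{n-1}})$) mirrors Cases~1 and~2 of the paper's Lemma~\ref{lem:jump-flip}, and the plan to argue by induction on the hypergraph size via Lemmas~\ref{lem:flippable} and~\ref{lem:upc-heo} is the same route the paper takes. However, the hard part of the argument is left unproved in your case~(a): establishing that a \emph{minimal} jump of $n$ corresponds to exactly one pair flip, and that the jumped permutation is the canonical encoding of the flipped orientation, is precisely what occupies Case~1 of Lemma~\ref{lem:jump-flip} (a careful check that $\pi'$ is a linear extension of $P_{\cH_\nu,h'}$ and that the position of $\nu$ in it then records the cover structure correctly). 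Your sketch of case~(a) also has errors that would need fixing along the way: by Lemma~\ref{lem:heo2perm} the unique vertex covering $n$ sits at position $i+1$ of $\pi_h$, not $i-1$; and the minimal jump distance $d$ is \emph{not} ``the length of the maximal run of smaller values'' --- for the value $n$ that run always extends to the end of the permutation --- but rather the first distance at which reinserting $n$ gives a permutation in $\Pi_\cH$, which is exactly what the technical hypothesis of Lemma~\ref{lem:jump-flip} isolates.

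More seriously, in case~(b) the claim that ``$h$ and $h'$ assign the same head to every hyperedge containing $n$'', and hence that the pair flip only touches hyperedges not containing $n$, is false when $n$ is minimal. Take $\cH=\bigl([3],\{\{1\},\{2\},\{3\},\{1,2\},\{1,2,3\}\}\bigr)$, which is in hyperfect elimination order, and the consecutive permutations $\pi_h=312$ and $\pi_{h'}=321$ of $J(\Pi_\cH)$, with $3$ minimal in both. Here $h(\{1,2,3\})=2$ but $h'(\{1,2,3\})=1$, so the hyperedge $\{1,2,3\}\ni 3$ does change head. Your fallback ``$h(A)=h'(A)=n$ forces nothing to change on~$A$'' only covers the $n$-maximal subcase. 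The conclusion --- that the pair flip $(1,2)$ carries $h$ to $h'$ --- is still correct, but only because the flip changes the heads of hyperedges containing $n$ in exactly the right way; showing this is nontrivial and uses the hypothesis of Lemma~\ref{lem:jump-flip} that rules out intermediate hyperedges $A$ with $j,a\in A$ and $h(A)=a$. So while your decomposition is the right one, the actual verification is missing in case~(a), and the reasoning given in case~(b) breaks down in the $n$-minimal subcase.
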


To prove Lemma~\ref{lem:algJ-flip}, we introduce the following lemma from~\cite{perm_series_iii}.
We say that a jump of a value~$j$ in a permutation~$\pi\in S_n$ is \emph{clean}, if for every $k=j+1,\ldots,n$, the value~$k$ is either to the left or right of all values smaller than~$k$ in~$\pi$.

\begin{lemma}[{\cite[Lemma~24~(d)]{perm_series_iii}}]
\label{lem:clean-jumps}
For any zigzag language $L_n\seq S_n$, all jumps performed by Algorithm~J are clean.
\end{lemma}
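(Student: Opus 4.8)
The plan is to prove Lemma~\ref{lem:clean-jumps} by induction on~$n$, using the inductive description~\eqref{eq:JLn12} of the sequence $J(L_n)$ produced by Algorithm~J. For $n\leq 1$ there is nothing to prove, since no jump of positive value is ever performed. For $n\geq 2$ I would split according to whether the zigzag language~$L_n$ satisfies~(z1) or~(z2), and in either case classify the jumps of $J(L_n)$ by whether they move the value~$n$ or a smaller value. Writing $\pi_1,\pi_2,\ldots$ for the sequence $J(L_{n-1})$ (recall that $L_{n-1}$ is again a zigzag language), the induction hypothesis says that every jump performed by Algorithm~J on~$L_{n-1}$ is clean.

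Jumps moving the value~$n$ are clean for free: the cleanness condition for a jump of value~$j$ quantifies only over $k\in\{j+1,\ldots,n\}$, which is empty when $j=n$. In case~(z1) this already disposes of every jump occurring inside one of the blocks $\lvec{c}(\pi_i)$ or $\rvec{c}(\pi_i)$ of~\eqref{eq:JLn1}, since inside such a block the underlying permutation of~$[n-1]$ stays equal to~$\pi_i$ and only the inserted value~$n$ moves. The remaining jumps move a value $j\leq n-1$: in case~(z2) such a jump is a transition $c_n(\pi_i)\to c_n(\pi_{i+1})$, and in case~(z1) it is a transition between two consecutive blocks of~\eqref{eq:JLn1}. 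In the latter case I would use that the last permutation of one block and the first permutation of the next are $c_\ell(\pi_i)$ and $c_\ell(\pi_{i+1})$ for a common index $\ell\in\{1,n\}$, namely $\ell=1$ at the boundary between an $\lvec{c}$-block and an $\rvec{c}$-block, and $\ell=n$ at the boundary between an $\rvec{c}$-block and a $\lvec{c}$-block. Thus in both cases this jump of $J(L_n)$ is, once the value~$n$ is disregarded, literally the jump $\pi_i\to\pi_{i+1}$ performed by Algorithm~J on~$L_{n-1}$, carried out while~$n$ rests at position~$\ell\in\{1,n\}$ (with $\ell=n$ throughout in case~(z2)).

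It then remains to show the following: if a jump of value~$j$ in a permutation $\pi\in L_{n-1}$ is clean, then the corresponding jump of value~$j$ in $c_\ell(\pi)$ is clean, for $\ell\in\{1,n\}$. For $k=n$ this holds because $n$ occupies position~$1$ or position~$n$ of $c_\ell(\pi)$, hence lies to the left, respectively to the right, of all smaller values. For $j+1\leq k\leq n-1$ it holds because inserting the new maximum~$n$ at an extreme position does not change the relative order of~$k$ and of the values $1,\ldots,k-1$, so $k$ stays to the left, respectively to the right, of all values smaller than it, exactly as in~$\pi$. This closes the induction. The argument proves the lemma for the initial permutation $\pi_0=\ide_n$; I expect that the same induction, using the corresponding variant of the inductive description from~\cite{MR4391718}, also covers an arbitrary peak-free initial permutation.

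I expect the only delicate step to be the bookkeeping in case~(z1): confirming that consecutive blocks of~\eqref{eq:JLn1} are glued along permutations $c_\ell(\pi_i)$, $c_\ell(\pi_{i+1})$ with a common $\ell\in\{1,n\}$, and that the resulting elementary move really is a single jump of the same value and the same direction as the corresponding jump in $J(L_{n-1})$. The two remaining ingredients --- the empty-quantifier observation for value-$n$ jumps, and the preservation of extremeness under inserting a new maximum at an extreme position --- are immediate from the definitions.
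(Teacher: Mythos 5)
This lemma is not proved in the paper at all: it is imported verbatim from \cite[Lemma~24~(d)]{perm_series_iii}, so there is no internal proof to compare against, and your proposal has to be judged on its own. It stands up. The induction on~$n$ via the inductive description~\eqref{eq:JLn12} is sound: jumps of the value~$n$ (all moves inside a block $\lvec{c}(\pi_i)$ or $\rvec{c}(\pi_i)$ in case~(z1)) are clean because the cleanness condition quantifies over the empty set; the remaining jumps move a value $j\leq n-1$ and occur between $c_\ell(\pi_i)$ and $c_\ell(\pi_{i+1})$ with a common $\ell\in\{1,n\}$ (by condition~(z1) both $c_1(\pi_i)$ and $c_n(\pi_i)$ lie in~$L_n$, so the blocks are indeed glued as you claim; in case~(z2) one has $\ell=n$ throughout), hence they are lifts of the jumps of $J(L_{n-1})$, which are clean by induction; and cleanness survives the lift because $k=n$ sits at an extreme position while inserting the new maximum at position~$1$ or~$n$ does not disturb the relative order of any $k\leq n-1$ and the values below it. The only loose end is the one you flag yourself: the description~\eqref{eq:JLn1}--\eqref{eq:JLn2} is stated for the initialization $\pi_0=\ide_n$, so your argument covers that case (which is the setting of Theorem~\ref{thm:jump} and of all applications of Lemma~\ref{lem:clean-jumps} in this paper), while the full statement ``all jumps performed by Algorithm~J'' for an arbitrary peak-free~$\pi_0$ would require the corresponding variant of the inductive description from~\cite{MR4391718}; as stated this is a caveat, not a gap in the intended use.
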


Furthermore, for proving Lemma~\ref{lem:algJ-flip} we establish the following auxiliary lemma.

\begin{lemma}
\label{lem:jump-flip}
Let $\cH=([n],\cE)$ be a hypergraph in hyperfect elimination order, and let $\cH_\nu=([\nu],\cE_\nu)$ for some $\nu \in [n]$.
Let $\pi'$ be obtained from~$\pi_{h_\nu}$ by a clean right jump of the value~$j=\pi_{h_\nu}(i)$ by $d$ steps.
Suppose that for all $a\in\{\pi_{h_\nu}(i+2),\ldots,\pi_{h_\nu}(i+d)\}$ there is no hyperedge~$A\in\cE_\nu$ with $j,a\in A$ and $h_\nu(A)=a$, and either $i+d=\nu$, or for $b:=\pi_{h_\nu}(i+d+1)$ we have $b>j$ or there is a hyperedge~$A\in\cE_\nu$ with $j,b\in A$ and $h_\nu(A)=b$.
Then the jump in~$\pi_{h_\nu}$ is minimal, we have $\pi' \in \Pi_{\cH_\nu}$ and the acyclic orientation~$h'\in\AO_{\cH_\nu}$ obtained from~$h_\nu$ by the pair flip~$(j,\pi_{h_\nu}(i+1))$ satisfies $\pi'=\pi_{h'}$.
\end{lemma}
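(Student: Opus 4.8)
The statement is essentially a bookkeeping lemma that translates a single clean right jump in the permutation encoding into a single pair flip on the hypergraph, and checks that this jump is minimal and stays inside the zigzag language $\Pi_{\cH_\nu}$. The plan is to proceed by induction on $\nu$, mirroring the recursive definition of the encoding $h\mapsto\pi_h$ in Lemma~\ref{lem:heo2perm}; the hypotheses on the hyperedges of $\cH_\nu$ that straddle $j$ and the values between positions $i$ and $i+d$ are exactly what is needed to control one level of this recursion. Concretely, let $m$ be the largest value among $\pi_{h_\nu}(i),\ldots,\pi_{h_\nu}(i+d)$ that is distinct from $j$; by cleanness of the jump, all values larger than $j$ in the jumped substring sit consistently to one side, so one can peel off the largest symbol $\nu$ (if it lies in the affected window) using $p(\cdot)$ and $c_i(\cdot)$ and reduce to the statement for $\cH_{\nu-1}$. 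When $\nu$ is \emph{not} involved in the window, the jump takes place entirely inside the prefix that encodes $h_{\nu-1}$, and the position of $\nu$ in $\pi_{h_\nu}$ is unchanged, so the claim follows directly from the inductive hypothesis together with the observation that $h'$ restricted to $\cH_{\nu-1}$ is obtained from $h_{\nu-1}$ by the same pair flip.

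The core of the argument is the base-ish case where the jumped value $j$ is itself the position-$i$ entry and the window $\pi_{h_\nu}(i+1),\ldots,\pi_{h_\nu}(i+d)$ consists of values $\le$ the local structure allows. Here one shows three things in turn. First, $\pi'\in\Pi_{\cH_\nu}$: by the characterization in Lemma~\ref{lem:heo2perm}, $\pi'$ equals $\pi_{h'}$ where $h'$ is built by re-reading the linear extension after the flip; the hypothesis ``there is no hyperedge $A\in\cE_\nu$ with $j,a\in A$ and $h_\nu(A)=a$'' for every $a$ strictly between positions $i+1$ and $i+d$ guarantees that moving $j$ past those vertices does not change the head of any hyperedge containing $j$ except possibly the one shared with $\pi_{h_\nu}(i+1)$, so the resulting orientation is precisely the pair flip $(j,\pi_{h_\nu}(i+1))$, and Lemma~\ref{lem:flippable} (applied via the unique parent--child property of Lemma~\ref{lem:upc-heo}) certifies that this is a legal, acyclicity-preserving flip; in particular $h'\in\AO_{\cH_\nu}$ and $\pi'=\pi_{h'}\in\Pi_{\cH_\nu}$. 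Second, $\pi'=\pi_{h'}$: this is immediate once the previous point is set up, by unwinding the case distinction (i)--(iii) in Lemma~\ref{lem:heo2perm} according to whether $j$ is maximal, minimal, or interior in $P_{\cH_\nu,h'}$, using the position of the new parent of $j$. Third, minimality of the jump: a right jump of $j$ by $d'<d$ steps would land $j$ immediately after some $a=\pi_{h_\nu}(i+1+d')$ with $1\le d'<d$; by the stated hypothesis there is no hyperedge $A\in\cE_\nu$ with $j,a\in A$ and $h_\nu(A)=a$, so in the orientation after this shorter jump the head assignments relevant to $j$ are the same as in $h_\nu$ itself --- equivalently, the shorter jump produces a permutation encoding the \emph{same} acyclic orientation $h_\nu$, hence not a new member of $\Pi_{\cH_\nu}$ (recall $\Pi_{\cH_\nu}$ contains exactly one representative per orientation). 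That is the definition of the jump being minimal with respect to $\Pi_{\cH_\nu}$.

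For the ``either $i+d=\nu$, or\ldots'' clause: this just says the jump stops at the right place, i.e.\ it cannot be extended to a \emph{longer} clean right jump of $j$ that still stays in $\Pi_{\cH_\nu}$ --- but that extendability is not claimed; rather this clause is a hypothesis ensuring we are looking at the jump Algorithm~J would actually perform, and it is used (in the inductive step when $\nu$ lies just past the window) to guarantee that the barrier value $b$ either exceeds $j$ (so cleanness blocks further motion at this recursion level) or shares a head-$b$ hyperedge with $j$ (so further motion would change an orientation and hence is not ``free''). I would fold this into the minimality verification rather than treating it separately.

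The main obstacle I anticipate is the careful interaction between \emph{cleanness} of the jump and the head-assignment rule $h(A)=\argmax_{i\in A}\pi^{-1}(i)$: one has to argue that for a hyperedge $A$ with $j\in A$ whose current head is some large value $k>j$ sitting to the right of the window, sliding $j$ rightward past the window does not disturb $h(A)$, and this relies precisely on cleanness placing $k$ (and everything between $k$ and the smaller values it dominates) on one side consistently. Making this ``sliding $j$ doesn't change heads of straddling hyperedges'' claim precise --- for \emph{all} hyperedges containing $j$ simultaneously, not just those fully inside $[i,i+d]$ --- is the delicate point; the hyperfect elimination order and Lemma~\ref{lem:upc-heo} are what prevent two different straddling hyperedges from forcing $j$ to acquire two incomparable new neighbors. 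Everything else is routine unwinding of the inductive definitions in Lemma~\ref{lem:heo2perm}, so I would write the induction skeleton first, isolate the ``heads unchanged'' sublemma as the one real computation, and let the three conclusions ($\pi'\in\Pi_{\cH_\nu}$, minimality, $\pi'=\pi_{h'}$) drop out in a few lines each.
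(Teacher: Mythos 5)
Your overall strategy (induction on $\nu$, splitting on whether the jumped value $j$ equals $\nu$, showing the full jump realizes the pair flip $(j,\pi_{h_\nu}(i+1))$) matches the paper's proof, and your identification of the pair flip is correct. However, your minimality argument contains a genuine error. You claim that a shorter right jump by $d'<d$ steps ``produces a permutation encoding the \emph{same} acyclic orientation $h_\nu$,'' and conclude it cannot be a new member of $\Pi_{\cH_\nu}$. This is false: the shorter jump still moves $j$ past $a_1:=\pi_{h_\nu}(i+1)$, and since $a_1$ covers $j$ in $P_{\cH_\nu,h_\nu}$ (this is exactly why $a_1$ sits immediately after $j$ in the encoding when $j=\nu$), the pair flip $(j,a_1)$ is nontrivial. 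The permutation $\pi''$ obtained after the shorter jump therefore encodes $h'$, not $h_\nu$. The correct conclusion is reached by a different route: $\pi''$ fails to be the \emph{canonical} representative $\pi_{h'}=\pi_{h''}$, because the element sitting immediately after $j$ in $\pi''$, namely $a_{d'+1}=\pi_{h_\nu}(i+d'+1)$, is \emph{not} the unique cover of $j$ in $P_{\cH_\nu,h'}$ --- and this is exactly what the hypothesis (no hyperedge $A$ with $j,a_{d'+1}\in A$ and $h_\nu(A)=a_{d'+1}$, hence none with $h'(A)=a_{d'+1}$ either) supplies. You invoke the right hypothesis but attach it to the wrong intermediate claim.

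Two smaller points. First, your appeal to cleanness in the inductive reduction is misdirected: a right jump of $j$ already forces every jumped-over value to be smaller than $j$, so ``all values larger than $j$ in the jumped substring sit consistently to one side'' is vacuous. Cleanness is needed only in the case $j<\nu$, where it pins $\nu$ to position $1$ or $\nu$ of both $\pi_{h_\nu}$ and $\pi'$, which is what licenses peeling $\nu$ off via $p(\cdot)$ and applying the inductive hypothesis to $\cH_{\nu-1}$; it has nothing to do with the window itself. Second, the case distinction should be stated simply as $j=\nu$ versus $j<\nu$ rather than $\nu$ ``lying in the affected window''; with that formulation and the corrected minimality argument, your plan lines up with the paper's proof.
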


\begin{proof}
We proceed by induction on~$\nu$.
The result is clear if $\nu=0$ or $\nu=1$, so we assume that~$\nu\geq 2$.
We distinguish two cases.

{\bf Case 1:} $j = \nu$.
Let $\hat \pi:=p(\pi_{h_\nu})=p(\pi')\in\Pi_{\cH_{\nu-1}}$.
By definition, we have that $\pi_{h_\nu}=c_i(\hat \pi)$ and $\pi_{h_\nu}(i+1)$ covers $\nu$ in~$P_{\cH_\nu,h_\nu}$.
Recall that by Lemma~\ref{lem:heo2perm} the mapping $h_\nu \mapsto \pi_{h_\nu}$ is a bijection between~$\AO_{\cH_\nu}$ and~$\Pi_{\cH_\nu}$ and that for every $A \in \cE_\nu$ we have
\begin{equation*}
h_\nu(A) = \argmax_{x \in A} \pi_{h_\nu}^{-1}(x).
\end{equation*}
Since $\pi_{h_\nu}(i+1)$ covers~$\nu$ in~$P_{\cH_\nu,h_\nu}$, by Lemma~\ref{lem:flippable} we can define $h':\cE_\nu \to [\nu]$ as the acyclic orientation that is obtained from~$h_\nu$ by the pair flip~$(\nu,\pi_{h_\nu}(i+1))$.
Using the definition~\eqref{eq:pair-flip}, we therefore obtain
\begin{equation}
\label{eq:h-flipped}
h'(A)=\begin{cases}
  \nu & \text{if $h_\nu(A)=\pi_{h_\nu}(i+1)$ and $\nu \in A$},\\
  h_\nu(A) & \text{otherwise}.
\end{cases}
\end{equation}

We aim to show that $\pi'$ is a linear extension of~$P_{\cH_\nu,h'}$.
Let $x,y \in [\nu]$ such that $x$ is covered by~$y$ in~$P_{\cH_\nu,h'}$.
As $x$ is covered by $y$, there exists a hyperedge $X\in \cE_\nu$ with $x,y \in X$ and $h'(X)=y$.

We first consider the case~$y\neq \nu$.
In this case we have $h'(X)=h_\nu(X)=y$.
Hence, $x\prec_{h_\nu} y$ and consequently $\pi_{h_\nu}^{-1}(x)<\pi_{h_\nu}^{-1}(y)$.
If $x\neq \nu$, then we trivially have $\pi_{h'}^{-1}(x)<\pi_{h'}^{-1}(y)$.
It remains to consider the case $x=\nu$.
We define
\begin{equation*}
\begin{split}
C'&:=\{\pi_{h_\nu}(i+2),\ldots,\pi_{h_\nu}(i+d)\}, \\
C&:=\{\pi_{h_\nu}(i+1)\}\cup C',
\end{split}
\end{equation*}
Note that $y\neq \pi_{h_\nu}(i+1)$, otherwise the definition~\eqref{eq:h-flipped} would imply $h'(X)=x$ and consequently $y\prec_{h'} x$.
Furthermore, from the assumption that for all $a\in C'$ there is no hyperedge~$A\in\cE_\nu$ with $\nu,a\in A$ and $h_\nu(A)=a$, we obtain that $y\notin C'$.
Combining these two observations shows that $y\notin C$, and therefore we have $\pi_{h'}^{-1}(x)<\pi_{h'}^{-1}(y)$, as desired.

We now consider the case~$y=\nu$.
There are two possible subcases.
If $\pi_{h_\nu}(i+1)\in X$, then by the definition~\eqref{eq:h-flipped} we have $h_\nu(X)=\pi_{h_\nu}(i+1)$ and consequently $\pi_{h_\nu}^{-1}(x)\leq i+1$, implying that $\pi_{h'}^{-1}(x)<\pi_{h'}^{-1}(y)$.
If $\pi_{h_\nu}(i+1)\notin X$, then we have $h'(X)=h_\nu(X)=y$, and consequently $\pi_{h_\nu}^{-1}(x)<i$, implying that $\pi_{h'}^{-1}(x)<\pi_{h'}^{-1}(y)$ as well.

We have shown that $\pi'$ is a linear extension of~$P_{\cH_\nu,h'}$.
We now use the assumption that either $i+d=\nu$, or for $b:=\pi_{h_\nu}(i+d+1)$ there is a hyperedge~$A\in\cE_\nu$ with $\nu,b\in A$ and $h_\nu(A)=b$ (the case $b>j=\nu$ is impossible).
In the first case $\nu$ is maximal in~$P_{\cH_\nu,h'}$.
In the second case we have $h'(A)=h_\nu(A)=b$ and therefore $\nu \prec_{h'} b$.
As $\nu=j$ and~$b$ are at neighboring positions in~$\pi'$, we see that $b=\pi_{h_\nu}(i+d+1)$ covers~$\nu$ in~$P_{\cH_\nu,h'}$.
Combining these observations shows that the jump in~$\pi_{h_\nu}$ is minimal, we have $\pi' \in \Pi_{\cH_\nu}$ and the acyclic orientation~$h'$ obtained from $h_\nu$ by the pair flip~$(j,\pi_{h_\nu}(i+1))$ satisfies $\pi'=\pi_{h'}$.

{\bf Case 2:} $j<\nu$.
Note that $p(\pi_{h_\nu})$ and $p(\pi')$ differ in a right jump of the value~$j$.
As the jump is clean by assumption, we have that $\pi_{h_\nu}^{-1}(\nu)=\pi'^{-1}(\nu) \in \{1,\nu\}$.
In fact, we may assume that $\pi_{h_\nu}^{-1}(\nu)=\pi'^{-1}(\nu)=\nu$, as the other case is analogous.
By induction, the jump in $p(\pi_{h_\nu})$ is minimal, we have $p(\pi') \in \Pi_{\cH_{\nu-1}}$ and the acyclic orientation $h':\cE_{\nu-1} \to [\nu-1]$ of~$\cH_{\nu-1}$ obtained from~$h_{\nu-1}$ by the pair flip $(j,\pi_{h_{\nu-1}}(i+1))=(j,\pi_{h_\nu}(i+1))$ satisfies $p(\pi')=\pi_{h'}\in \Pi_{\cH_{\nu-1}}$.
It follows that the jump in~$\pi_{h_\nu}$ is also minimal.

Moreover, we define
\begin{align*}
\hat h(A) & :=\begin{cases}
 \nu & \text{if $\nu \in A$}, \\
 h'(A) & \text{otherwise,} \\
\end{cases} \\
& =\begin{cases}
 \nu & \text{if $\nu \in A$}, \\
 j & \text{if $\nu \notin A$, $h_{\nu-1}(A)=\pi_{h_\nu}(i+1)$ and $j \in A$,} \\
 h_{\nu-1}(A) & \text{otherwise,} \\
\end{cases} \\
& =\begin{cases}
 j & \text{if $h_{\nu}(A)=\pi_{h_\nu}(i+1)$ and $j \in A$,} \\
 h_\nu(A) & \text{otherwise,} \\
\end{cases}
\end{align*}
From the last line of this equation we see that $\hat h$ is obtained from~$h_\nu$ by the pair flip~$(j,\pi_{h_\nu}(i+1))$.
Furthermore, as $\pi'=c_\nu(\pi_{h'})$ we conclude that $\pi'=\pi_{\hat h} \in \Pi_{\cH_\nu}$.
\end{proof}

\begin{proof}[Proof of Lemma~\ref{lem:algJ-flip}]
Let $h,h'\in\AO_\cH$, $j=\pi_h(i)$ and~$d$ be such that $\pi_h$ and $\pi_{h'}$ differ in a minimal jump of the value~$j$ by $d$ steps.
By Lemma~\ref{lem:clean-jumps} we can assume that the jump is clean.
Furthermore, we assume that the jump is a right jump, as the other case follows the same ideas.
By Lemma~\ref{lem:jump-flip} applying the pair flip~$(j,\pi_h(i+1))$ to~$h$ yields an acyclic orientation~$h''\in\AO_\cH$ such that $\pi_{h''}$ is obtained from~$\pi_h$ by a minimal right jump of~$j$.
It follows that $h''=h'$, and the lemma is proved.
\end{proof}

Combining Theorem~\ref{thm:jump}, Lemma~\ref{lem:heo2perm}, and Lemma~\ref{lem:algJ-flip} yields our first main result.

\begin{theorem}
\label{thm:mainhyper}
For every hypergraph $\cH = ([n],\cE)$ in hyperfect elimination order, Algorithm~J with input~$\Pi_\cH$ as defined in~\eqref{eq:PiH} generates a sequence of permutations $\pi_{h_1}, \pi_{h_2}, \ldots$, where $h_1,h_2,\ldots \in \AO_\cH$ such that $h_1,h_2,\ldots $ is a Hamilton path in the flip graph on acyclic orientations of~$\cH$, or equivalently, on the skeleton of the hypergraphic polytope~$Z(\cH)$.
\end{theorem}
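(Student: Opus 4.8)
The plan is to assemble the statement from the three results developed above, with essentially no extra work. First I would invoke Lemma~\ref{lem:heo2perm}: since $\cH$ is in hyperfect elimination order, the set $\Pi_\cH$ from~\eqref{eq:PiH} is a zigzag language of permutations and the encoding $\AO_\cH \to S_n$, $h \mapsto \pi_h$, is injective, hence a bijection onto~$\Pi_\cH$. I would also note that $\ide_n \in \Pi_\cH$: it is $\pi_h$ for the orientation~$h$ of~$\cH$ sending every hyperedge~$A$ to~$\max A$, because at each induction step of Lemma~\ref{lem:heo2perm} the newly added vertex is then maximal in its poset, so case~(i) applies and appends the new largest value at the rightmost position.

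Next I would apply Theorem~\ref{thm:jump} of~\cite{MR4391718} with initial permutation $\pi_0 = \ide_n$: Algorithm~J with input~$\Pi_\cH$ then visits every permutation of~$\Pi_\cH$ exactly once. Transporting this listing back through the bijection $h \mapsto \pi_h$ produces a sequence $h_1, h_2, \ldots$ of acyclic orientations in which every element of~$\AO_\cH$ occurs exactly once.

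It then remains to verify that consecutive orientations in this sequence form an edge of the flip graph on~$\AO_\cH$, which is exactly Lemma~\ref{lem:algJ-flip}: consecutively visited permutations $\pi_{h_k}$ and~$\pi_{h_{k+1}}$ differ by a minimal jump, and the corresponding orientations differ by a single pair flip. Hence $h_1, h_2, \ldots$ is a Hamilton path in the flip graph on~$\AO_\cH$, and the ``equivalently'' clause follows from Lemma~\ref{lem:hyperflip}, which identifies that flip graph with the skeleton of the hypergraphic polytope~$Z(\cH)$.

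I expect no real obstacle in the theorem itself: the substance sits in the ingredients already proved, namely Lemma~\ref{lem:heo2perm} (whose zigzag-language claim rests on the unique parent-child property of Lemma~\ref{lem:upc-heo}) and Lemma~\ref{lem:algJ-flip} (whose proof runs through the bookkeeping of Lemma~\ref{lem:jump-flip}, matching a clean minimal jump in~$\pi_h$ with a single pair flip and using the cleanliness guaranteed by Lemma~\ref{lem:clean-jumps}). The only point I would be careful to state explicitly is that $\ide_n \in \Pi_\cH$, so that Theorem~\ref{thm:jump} can be cited in precisely its stated form; this is immediate from the encoding described above.
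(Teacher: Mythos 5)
Your proof is correct and follows exactly the same approach as the paper, which simply combines Theorem~\ref{thm:jump}, Lemma~\ref{lem:heo2perm}, and Lemma~\ref{lem:algJ-flip} (with Lemma~\ref{lem:hyperflip} handling the ``equivalently'' clause). Your explicit check that $\ide_n\in\Pi_\cH$ via the orientation $h(A)=\max A$ is a sensible detail to spell out, though it also follows abstractly since every zigzag language contains $\ide_n$ by closure under $c_n$.
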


\subsection{Application to building sets and nestohedra}
\label{sec:bs}

In what follows, we use the terminology of Postnikov~\cite{MR2487491}, although the notion of building set goes back to De Concini and Procesi~\cite{MR1366622}.

\subsubsection{Acyclic orientations of building sets and nestohedra}

A hypergraph $\cB=(V,\cE)$ is a \emph{building set} if
\begin{enumerate}[label=(\roman*),leftmargin=8mm, noitemsep, topsep=1pt plus 1pt]
\item $\cE$ contains all singletons $\{v\}$ for $v\in V$;
\item for every pair $A,B\in\cE$ with $A\cap B\neq \emptyset$ we have $A\cup B\in\cE$.
\end{enumerate}
A building set is \emph{connected} if $V\in\cE$.

A remarkable property of acyclic orientations of building sets is that the transitive reduction of the corresponding posets are forests.

\begin{lemma}[{\cite[Thm.~7.4]{MR2487491}, \cite[Prop.~3.3]{MR3960512}}]
If $\cB$ is a building set, then
\begin{enumerate}[label=(\roman*),leftmargin=8mm, noitemsep, topsep=1pt plus 1pt]
\item the hypergraphic polytope $Z(\cB)$ of $\cB$ is simple;
\item the flip graph on $\AO_{\cB}$ is regular;
\item for every acyclic orientation $h\in\AO_{\cB}$, the poset $P_{\cH,h}$ is a forest.
\end{enumerate}
\end{lemma}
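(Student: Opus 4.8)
The plan is to prove the three items in the order (iii), (ii), (i): item (iii) uses only the building-set axioms, and (ii) and (i) then follow from (iii) together with the standard decomposition of a building set into connected components. For (iii), recall that a poset is a \emph{forest} exactly when every element is covered by at most one element, and argue by contradiction: suppose that in $P_{\cB,h}$ some vertex $v$ is covered both by $a$ and by $b$, with $a\ne b$. Then there are hyperedges $A,B\in\cE$ with $v,a\in A$, $h(A)=a$ and $v,b\in B$, $h(B)=b$. Since $v\in A\cap B\ne\emptyset$, building-set axiom (ii) gives $X:=A\cup B\in\cE$; set $c:=h(X)$. As $P_{\cB,h}$ has no $2$-cycles we cannot have $c=v$ (that would force $a\prec v$ from $a\in X$, against $v\prec a$); and $c\in A$ with $c\ne a$ would give both $c\prec a$ and $a\prec c$, so $c\in A$ forces $c=a$, and symmetrically $c\in B$ forces $c=b$. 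Hence $c\in\{a,b\}$, say $c=a$; then $b\in X$ and $h(X)=a$ give $b\prec a$, so $v\prec b\prec a$, contradicting that $v$ is covered by $a$. This is essentially the covering argument inside the proof of Lemma~\ref{lem:upc-heo}, except that here the hyperedge $X$ is produced directly by the building-set axiom rather than by a hyperfect elimination order.

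\textbf{Decomposition and item (ii).} Next I would use the standard structural fact that the inclusion-maximal hyperedges $V_1,\dots,V_k$ of $\cB$ partition $V$ (they cover $V$ because $\cE$ contains all singletons, and they are pairwise disjoint since two overlapping maximal hyperedges would have their union in $\cE$), so that $\cB$ is the disjoint union of the connected building sets $\cB|_{V_j}:=(V_j,\{A\in\cE\mid A\seq V_j\})$. Since no hyperedge of $\cB$ meets two of the parts, an acyclic orientation $h$ of $\cB$ amounts to an independent choice of acyclic orientations on the $\cB|_{V_j}$, and $P_{\cB,h}$ is the disjoint union of the posets $P_{\cB|_{V_j},\,h|_{V_j}}$, each of which has the maximum $h(V_j)$ and, by (iii), is a tree on $|V_j|$ vertices. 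Hence the Hasse diagram of $P_{\cB,h}$ is a forest with exactly $k$ components on $n:=|V|$ vertices, so it has $n-k$ edges, independently of $h$. By Lemma~\ref{lem:flippable} the flippable pairs in $h$ are exactly the cover relations of $P_{\cB,h}$, so the flip graph on $\AO_\cB$ is $(n-k)$-regular, which is (ii).

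\textbf{Item (i).} By Lemma~\ref{lem:hyperflip} the skeleton of $Z(\cB)$ is the flip graph on $\AO_\cB$, which is $(n-k)$-regular by (ii). It remains to identify $n-k$ with $\dim Z(\cB)$: since $Z(\cB)=\sum_{A\in\cE}\Delta_A$ and every hyperedge is contained in a single part, the linear forms $\sum_{i\in V_j}x_i$ are constant on $Z(\cB)$, giving $\dim Z(\cB)\le n-k$; on the other hand the Minkowski summands $\Delta_{V_1},\dots,\Delta_{V_k}$ lie in orthogonal coordinate blocks and span an affine subspace of dimension $\sum_j(|V_j|-1)=n-k$, so equality holds. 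Finally, a $d$-polytope whose graph is $d$-regular is simple, because each of its vertex figures is a $(d-1)$-polytope with exactly $d$ vertices and hence a simplex; applying this with $d=n-k$ yields that $Z(\cB)$ is simple.

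\textbf{Main difficulty.} The combinatorial heart is the covering argument for (iii), which is short. The most delicate point is (i): one must actually verify that $d$-regularity of the graph upgrades to simplicity, which is why the identification $\dim Z(\cB)=n-k$ cannot be skipped; it is also worth sanity-checking the degenerate case where $\cB$ consists only of singletons, in which $k=n$, $Z(\cB)$ is a point, $\AO_\cB$ is a single orientation, and the flip graph is $0$-regular.
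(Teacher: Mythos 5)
Since the paper states this lemma with citations to Postnikov \cite[Thm.~7.4]{MR2487491} and to Benedetti, Bergeron, and Machacek \cite[Prop.~3.3]{MR3960512} and gives no proof of its own, there is no in-paper argument to compare against; your task was to reconstruct a proof from scratch, and you have succeeded. Your covering argument for (iii) is a clean application of just the two building-set axioms, and you correctly observe that it mirrors the covering argument in the paper's proof of Lemma~\ref{lem:upc-heo} for hyperfect elimination orders --- the only difference is that here the needed hyperedge is $X=A\cup B$ itself, supplied directly by axiom~(ii), whereas in Lemma~\ref{lem:upc-heo} a hyperedge $X$ with $\{a,b\}\subseteq X\subseteq(A\cup B)-n$ has to be extracted from the elimination-order hypothesis. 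The reduction of (ii) and (i) to (iii) is also correct: the maximal hyperedges $V_1,\dots,V_k$ do partition $V$, each $P_{\cB|_{V_j},h|_{V_j}}$ is connected because $h(V_j)$ is a unique maximum and hence, by (iii), a rooted tree, which yields the edge count $n-k$ for the Hasse forest; Lemma~\ref{lem:flippable} then gives $(n-k)$-regularity of the flip graph; and the identification $\dim Z(\cB)=n-k$ combined with the standard fact that a $d$-polytope with $d$-regular graph is simple closes out (i). One small point worth making explicit at the outset is that \emph{forest} is being used in Postnikov's $\cB$-forest sense --- a rooted forest in which every element has at most one upper cover --- rather than merely an acyclic undirected Hasse diagram; your proof establishes the stronger statement, which is the one actually needed for the tree/edge counting in (ii).
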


The hypergraphic polytope~$Z(\cB)$ of a building set~$\cB$ is called a \emph{nestohedron}~\cite{MR2487491}.
The forests corresponding to acyclic orientations of $\AO_{\cB}$ are called $\cB$-forests (or $\cB$-trees if $\cB$ is connected).

\subsubsection{Chordal building sets}

We follow Postnikov, Reiner, and Williams~\cite{MR2520477}\footnote{The original definition is actually reversed, but we need this one for consistency with our definition of perfect elimination order in chordal graphs.} and say that a building set $\cB=([n],\cE)$ is a \emph{chordal building set} if and only if for any $A=\{i_1<i_2<\cdots <i_r\}\in\cE$ and $s\in [r]$, we have $\{i_1<i_2<\cdots <i_s\}\in\cE$.

The point of considering chordal building sets is the following direct connection between chordality and hyperfect elimination orders.

\begin{lemma}
\label{lem:cbs-heo}
A building set is chordal if and only if it is in hyperfect elimination order.
\end{lemma}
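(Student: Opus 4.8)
The plan is to prove both implications by induction on $n=|V|$. The preliminary observation used throughout is that for a building set $\cB=([n],\cE)$ the induced subhypergraph $\cB_{n-1}$ on the vertex set $[n-1]$ is again a building set: it contains every singleton of $[n-1]$, and if $A,B\in\cE$ both lie in $[n-1]$ and meet, then $A\cup B\in\cE$ again lies in $[n-1]$. Moreover both properties descend to $\cB_{n-1}$ — the second defining clause of hyperfect elimination order literally says that $\cB_{n-1}$ is in hyperfect elimination order, and if $\cB$ is chordal then so is $\cB_{n-1}$, since an initial segment of a hyperedge of $\cB_{n-1}$ is an initial segment of a hyperedge of $\cB$ that happens to lie in $[n-1]$. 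So in each direction the inductive step reduces to checking the ``top-level'' condition for the vertex~$n$.

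For the direction chordal $\Rightarrow$ hyperfect elimination order, the base case $n=0$ is trivial, and for $n\ge 1$ the inductive hypothesis applied to the chordal building set $\cB_{n-1}$ yields the second clause of hyperfect elimination order. For the first clause, I would take $A,B\in\cE$ with $n\in A\cap B$ and distinct $a\in A-n$, $b\in B-n$: the union axiom of a building set gives $A\cup B\in\cE$, and since $n$ is the largest element of $[n]$ and lies in $A\cup B$, the set $(A\cup B)-n$ is an initial segment of the sorted list of $A\cup B$, hence lies in $\cE$ by chordality. Then $X:=(A\cup B)-n$ satisfies $\{a,b\}\seq X\seq (A\cup B)-n$, as required. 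This direction is essentially bookkeeping.

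For the direction hyperfect elimination order $\Rightarrow$ chordal, induct again on $n$, the base case being trivial. For $n\ge 1$, $\cB_{n-1}$ is in hyperfect elimination order by the second clause, so by the inductive hypothesis it is chordal. Now let $A=\{i_1<\cdots<i_r\}\in\cE$ and $1\le s\le r$; I must show $\{i_1,\ldots,i_s\}\in\cE$. If $n\notin A$, then $A$ is a hyperedge of $\cB_{n-1}$ and chordality of $\cB_{n-1}$ finishes it. If $n\in A$, then $i_r=n$; the case $s=r$ is trivial, so assume $s\le r-1$, and it suffices to show $A-n=\{i_1,\ldots,i_{r-1}\}\in\cE$, since then $A-n$ is a hyperedge of $\cB_{n-1}$ and chordality of $\cB_{n-1}$ delivers all of its initial segments, in particular $\{i_1,\ldots,i_s\}$. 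To prove $A-n\in\cE$ I would invoke the first clause of hyperfect elimination order with both hyperedges equal to~$A$ (which the definition explicitly permits): for each consecutive pair $\{i_k,i_{k+1}\}$ with $1\le k\le r-2$ it produces a hyperedge $X_k\in\cE$ with $\{i_k,i_{k+1}\}\seq X_k\seq A-n$. Since $X_k$ and $X_{k+1}$ share the vertex $i_{k+1}$, iterating the union axiom of a building set gives $X_1\cup\cdots\cup X_{r-2}\in\cE$; this union contains every $i_k$ with $k\le r-1$ and is contained in $A-n$, hence equals $A-n$. The degenerate cases $r\le 2$ are handled directly, as $A-n$ is then empty or a singleton.

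I expect the main obstacle to be just this last step: recognising that the first clause of hyperfect elimination order, applied with $A=B$, is precisely the statement that pairs of vertices of a hyperedge can be covered by sub-hyperedges, and that this, combined with the union axiom of a building set, reconstructs the truncated hyperedge $A-n$. Everything else is careful but routine; the one point deserving a moment's care is that passage to $\cB_{n-1}$ preserves the building-set axioms, but this is immediate.
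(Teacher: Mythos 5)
Your proof is correct and follows essentially the same approach as the paper: both directions hinge on the observation that condition~(i) of hyperfect elimination order can be applied with $A=B$, combined with the building-set union axiom. The only cosmetic difference is in the $(\Leftarrow)$ direction, where you construct $A-n$ directly as a union of hyperedges $X_k$ covering consecutive pairs $\{i_k,i_{k+1}\}$, while the paper argues by contradiction from a maximal hyperedge $X\subseteq A-n$; the two arguments are interchangeable.
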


\begin{proof}
We first observe that for building sets, condition~\eqref{itm:heo1} for the hyperfect elimination order reduces to the case where $A=B$.
Indeed, if $A,B\in\cE$ both contain~$n$, then $A\cup B\in\cE$.

$(\Rightarrow)$
Consider a chordal building set $\cB=([n],\cE)$.
We need to verify conditions~\eqref{itm:heo1} and \eqref{itm:heo2} of the hyperfect elimination order.
Condition~\eqref{itm:heo1} is satisfied, since for any $A\in\cE$ such that $n\in A$, we have $A- n\in\cE$ from the chordality of $\cB$, and $A-n$ fulfills the requirements of~$X$ in condition~\eqref{itm:heo1}.
It remains to observe that if $\cB$ is chordal, then so is~$\cB_{n-1}$.

$(\Leftarrow)$
Consider a building set $\cB=([n],\cE)$ in hyperfect elimination order.
We only need to prove that condition~\eqref{itm:heo1} implies that for any $A\in\cE$ with $n\in A$, we have $A-n\in\cE$.
Suppose for the sake of contradiction that every hyperedge $X\in\cE$ contained in $A-n$ has size strictly less than~$|A-n|$.
Consider such a hyperedge~$X$ of maximum size, a vertex $a\in X$, and another vertex $b\in (A-n)\setminus X$.
Applying condition~\eqref{itm:heo1} on~$a$ and~$b$ yields another hyperedge~$X'\in\cE$ contained in~$A-n$ that contains both~$a$ and~$b$.
Since $\cH$ is a building set, we must have $X\cup X'\in\cE$, and this set is larger than~$X$, a contradiction.
Therefore, we have $A-n\in\cE$.
The rest follows by induction on~$\cB_{n-1}$.
\end{proof}

Nestohedra of chordal building sets are called \emph{chordal nestohedra}~\cite{MR2520477}.
By combining Lemma~\ref{lem:cbs-heo} and Theorem~\ref{thm:mainhyper}, we therefore obtain new Hamilton paths on chordal nestohedra.

\begin{theorem}
\label{thm:mainbuild}
For every chordal building set $\cB = ([n],\cE)$, Algorithm~J with input~$\Pi_{\cB}$ as defined in~\eqref{eq:PiH} generates a sequence of permutations $\pi_{h_1}, \pi_{h_2}, \ldots$, where $h_1,h_2,\ldots \in \AO_{\cB}$ such that $h_1,h_2,\ldots $ is a Hamilton path in the flip graph on acyclic orientations of~$\cB$, or, equivalently, on the skeleton of the chordal nestohedron~$Z(\cB)$.
\end{theorem}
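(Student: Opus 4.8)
The plan is to obtain Theorem~\ref{thm:mainbuild} as an immediate consequence of the two tools already established, namely Lemma~\ref{lem:cbs-heo} and Theorem~\ref{thm:mainhyper}. First I would invoke Lemma~\ref{lem:cbs-heo}: since $\cB=([n],\cE)$ is a chordal building set, it is a hypergraph in hyperfect elimination order. This is precisely the hypothesis required to apply Theorem~\ref{thm:mainhyper} with $\cH:=\cB$. In particular, the encoding $h\mapsto\pi_h$ of Lemma~\ref{lem:heo2perm} is well-defined, the set $\Pi_\cB$ of~\eqref{eq:PiH} is a zigzag language, and Algorithm~J with initial permutation $\ide_n$ and input $\Pi_\cB$ lists every permutation of $\Pi_\cB$ exactly once.

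Next I would translate this permutation listing back to acyclic orientations. By Theorem~\ref{thm:mainhyper}, the sequence $\pi_{h_1},\pi_{h_2},\ldots$ produced by Algorithm~J corresponds to a sequence $h_1,h_2,\ldots\in\AO_\cB$ that traverses each acyclic orientation of~$\cB$ exactly once, with consecutive orientations differing in a pair flip; equivalently, it is a Hamilton path in the flip graph on~$\AO_\cB$, which by Lemma~\ref{lem:hyperflip} is the skeleton of the hypergraphic polytope $Z(\cB)$. Finally, I would recall that the hypergraphic polytope $Z(\cB)$ of a chordal building set is, by definition, a \emph{chordal nestohedron} (Section~\ref{sec:bs}), and that the resulting orientations correspond to $\cB$-forests; this identifies the polytope appearing in the statement and completes the argument.

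There is essentially no obstacle left at this stage, since all the substance has been absorbed into Lemma~\ref{lem:cbs-heo} (the equivalence between chordality of a building set and the hyperfect elimination order, whose only delicate point is deducing $A-n\in\cE$ from condition~\eqref{itm:heo1} via the building-set union axiom) and into Theorem~\ref{thm:mainhyper} (which itself rests on the unique parent-child property of Lemma~\ref{lem:upc-heo} and the jump-to-flip correspondence of Lemma~\ref{lem:algJ-flip}). The only point requiring a little care is bookkeeping: one must ensure that the labeling $1,\ldots,n$ of the ground set is the one witnessing chordality, so that $\cB_{n-1}$ is again a chordal building set and the induction defining $\Pi_\cB$ goes through — but this is exactly what the `$(\Leftarrow)$' direction of Lemma~\ref{lem:cbs-heo}, applied inductively, guarantees.
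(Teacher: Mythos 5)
Your proposal matches the paper's proof exactly: the paper also obtains Theorem~\ref{thm:mainbuild} as an immediate corollary by combining Lemma~\ref{lem:cbs-heo} (chordal building set $\Leftrightarrow$ hyperfect elimination order) with Theorem~\ref{thm:mainhyper}, and the supplementary remarks you add about the encoding, the zigzag language, Lemma~\ref{lem:hyperflip}, and the labeling are all consistent with the paper. Nothing further is needed.
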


\subsubsection{Graphical building sets and elimination trees}

For a graph $G=(V,E)$, we let $\cB(G):=(V,\cE)$ be the hypergraph such that
\begin{equation*}
\cE:=\{U\seq V\mid \text{$G[U]$ is connected}\}.
\end{equation*}
Clearly, $\cE$ contains all singletons $\{v\}$, $v\in V$.
Also, if two subsets induce a connected subgraph of~$G$ and have a nonempty intersection, then their union also induces a connected subgraph.
Therefore, $\cB(G)$ is a building set, called the \emph{graphical building set} of~$G$.

An \emph{elimination tree} of a connected graph~$G=(V,E)$ is an unordered rooted tree~$T$ obtained by removing a vertex~$v$ of~$G$ which becomes the root of~$T$, and by recursing on the connected components of $G-v$, whose elimination trees become the subtrees of~$v$ in~$T$.
An \emph{elimination forest} of a (not necessarily connected) graph~$G$ is a set of elimination trees, one for each connected component of~$G$.
An elimination forest of a graph~$G=([n],E)$ can be produced from a permutation on~$[n]$, by choosing to remove at each step the vertex that has the leftmost position in the permutation.
Two elimination forests differ by a \emph{rotation} if there exist two permutations producing them that differ by a single adjacent transposition.
We refer to Figure~\ref{fig:hyper2} for an example of rotation and to \cite{DBLP:conf/soda/CardinalMM22} for more details.

The \emph{graph associahedron} of~$G$ is the hypergraphic polytope~$Z(\cB(G))$.
The vertices of $Z(\cB(G))$ are in one-to-one correspondence with the elimination forests of~$G$, and its skeleton is the \emph{rotation graph} on the elimination forests of~$G$; see \cite{MR2239078,MR2487491,MR2520477,MR3383157}.
Combining this characterization with~\eqref{eq:hypervert} and Lemma~\ref{lem:hyperflip}, we obtain the following.

\begin{lemma}
\label{lem:BG-elim}
Acyclic orientations of a graphical building set~$\cB(G)$ are in one-to-one correspondence with the elimination forests of~$G$, and the flip graph on the acyclic orientations of~$\cB(G)$ is isomorphic to the rotation graph on the elimination forests of $G$.
\end{lemma}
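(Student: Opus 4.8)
The plan is to deduce the statement by combining three facts that are already in place: the description of the vertices of a hypergraphic polytope as in-degree sequences of acyclic orientations, namely~\eqref{eq:hypervert}; Lemma~\ref{lem:hyperflip}, which identifies the flip graph on~$\AO_{\cH}$ with the skeleton of~$Z(\cH)$; and the classical identification of the graph associahedron. Concretely, $Z(\cB(G))$ is the graph associahedron of~$G$, whose vertices are known to be in bijection with the elimination forests of~$G$ and whose $1$-skeleton is the rotation graph on those forests; see~\cite{MR2239078,MR2487491,MR2520477,MR3383157}. On the other hand, \eqref{eq:hypervert} applied to $\cH=\cB(G)$ says that $h\mapsto\delta_{\cB(G),h}$ is a bijection from~$\AO_{\cB(G)}$ onto the vertex set of~$Z(\cB(G))$. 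Composing the two bijections gives a bijection between $\AO_{\cB(G)}$ and the elimination forests of~$G$, and Lemma~\ref{lem:hyperflip} then transports the rotation-graph structure on the skeleton to the flip graph on~$\AO_{\cB(G)}$, yielding the claimed isomorphism.

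The step that requires actual work is to check that these two vertex bijections agree, so that a pair flip corresponds to an honest tree rotation and not merely to some unnamed skeleton edge. I would make the correspondence explicit, as already sketched in Section~\ref{sec:res1}: to an elimination forest~$T$ of~$G$ associate the orientation~$h_T$ sending each hyperedge $A\in\cE$ (i.e.\ each vertex subset with $G[A]$ connected) to the unique vertex of~$A$ that is a $T$-ancestor of all other vertices of~$A$, equivalently the vertex of~$A$ removed first in the order recorded by~$T$. Connectedness of $G[A]$ guarantees that this vertex exists and is unique. One then checks that $h_T$ is acyclic and that the poset $P_{\cB(G),h_T}$ is precisely the ancestor order of~$T$; conversely, using that $P_{\cB(G),h}$ is always a forest for a building set (part~(iii) of the lemma quoted above, from~\cite{MR2487491,MR3960512}), one reads off from any $h\in\AO_{\cB(G)}$ an elimination forest~$T$ with $h_T=h$, so $T\mapsto h_T$ is a bijection onto~$\AO_{\cB(G)}$. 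This is the same identification of vertices of~$Z(\cB(G))$ as the classical one, since in both cases a vertex is encoded by recording, for each hyperedge, which of its vertices comes ``last''.

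For the edge correspondence I would argue directly from Lemma~\ref{lem:flippable}: a pair $(i,j)$ is flippable in~$h_T$ exactly when $j$ covers~$i$ in $P_{\cB(G),h_T}$, i.e.\ when $i$ is the parent of~$j$ in~$T$. By~\eqref{eq:pair-flip} the flip changes the head of precisely those hyperedges~$A$ with $i,j\in A$ and $h_T(A)=j$, which are exactly the connected subsets contained in the subtree of~$T$ rooted at~$j$ together with~$i$ — i.e.\ the subtrees whose root is transferred from~$j$ to~$i$. That is exactly the effect of the rotation swapping the parent--child pair~$(i,j)$; compare Figure~\ref{fig:hyper2}. Hence flip-graph edges and rotation-graph edges match under $T\mapsto h_T$, which finishes the proof.

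I expect the only genuinely fiddly point to be this last bookkeeping: verifying that ``the hyperedges whose head moves from~$j$ to~$i$'' coincides with the family of $\cB(G)$-forests that get re-rooted by the rotation, handling also the degenerate cases in which $j$ is a leaf or a root of its tree (so that $i$ does not exist, or $j$ has no $T$-child). Everything else is a direct consequence of~\eqref{eq:hypervert}, Lemma~\ref{lem:hyperflip}, and the known combinatorics of the graph associahedron.
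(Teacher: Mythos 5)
Your proposal follows essentially the same route as the paper: Lemma~\ref{lem:BG-elim} is stated there with only a one-sentence proof, namely combining the classical identification of $Z(\cB(G))$ with the graph associahedron (vertices $=$ elimination forests, skeleton $=$ rotation graph), equation~\eqref{eq:hypervert}, and Lemma~\ref{lem:hyperflip}, which is exactly your first paragraph. The extra verification you add, making the bijection $T\mapsto h_T$ explicit and matching pair flips with rotations via Lemma~\ref{lem:flippable}, is correct but goes beyond what the lemma's statement (a bare isomorphism) strictly requires; the paper relegates that explicit correspondence to the informal discussion in Section~\ref{sec:res1} and Figure~\ref{fig:hyper2}.
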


This statement is illustrated in Figure~\ref{fig:hyper2}, showing an example of a rotation between two elimination trees of a graph, and the corresponding pair flip in the acyclic orientations of the building set.

\begin{lemma}[\cite{MR2520477}]
\label{lem:cgbs}
A graphical building set $\cB(G)$ is chordal if and only if $G$ is in perfect elimination order.
\end{lemma}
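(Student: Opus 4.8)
The plan is to prove the two implications directly from the definitions, the common tool being the observation that if $G$ is in perfect elimination order and $U\seq[n]$ is such that $G[U]$ is connected, then $j:=\max U$ is simplicial in $G[U]$: since $U\seq[j]$, the neighbours of $j$ inside $U$ are among its neighbours in $G_j$, which form a clique by the elimination order. For the direction ``$G$ in perfect elimination order $\Rightarrow$ $\cB(G)$ chordal'', I would take $A=\{i_1<i_2<\cdots<i_r\}\in\cE$, so that $G[A]$ is connected, and prove by downward induction on $t$ that $G[\{i_1,\ldots,i_t\}]$ is connected for every $t\in[r]$. The base case $t=r$ is the hypothesis, and in the inductive step $i_t=\max\{i_1,\ldots,i_t\}$ is simplicial in $G[\{i_1,\ldots,i_t\}]$ by the observation above, so for $t\geq 2$ one invokes the sublemma that deleting a simplicial vertex from a connected graph on at least two vertices leaves it connected (any path through such a vertex can be shortcut across the clique formed by its neighbours). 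Specializing to $t=s$ gives $\{i_1,\ldots,i_s\}\in\cE$ for every $s\in[r]$, which is precisely chordality of $\cB(G)$.

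For the converse ``$\cB(G)$ chordal $\Rightarrow$ $G$ in perfect elimination order'', I would argue the contrapositive. If $G$ is not in perfect elimination order, then some vertex $i$ fails to be simplicial in $G_i$, so it has two non-adjacent neighbours $a,b\in[i]$; relabelling, assume $a<b<i$. Then $A:=\{a,b,i\}$ lies in $\cE$, because $i$ is adjacent to both $a$ and $b$ so $G[A]$ is connected; but $A$ written in increasing order is $i_1=a<i_2=b<i_3=i$, and its length-$2$ prefix $\{a,b\}$ is not in $\cE$ since $a$ and $b$ are non-adjacent. Hence $\cB(G)$ is not chordal.

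The only genuine content is the sublemma that deleting a simplicial vertex preserves connectivity, together with keeping the vertex ordering straight; the degenerate cases (e.g.\ $r=1$, or $G[U]$ a single vertex) are vacuous, and $s$ ranges only over $[r]$ so the empty prefix never arises. I would also note that the paper's convention of taking prefixes rather than suffixes in the definition of a chordal building set is exactly what makes the statement align with ``vertex $i$ is simplicial in $G_i$'' rather than with the reverse elimination order. An alternative route would be to combine Lemma~\ref{lem:cbs-heo} with a verification that $\cB(G)$ is in hyperfect elimination order iff $G$ is in perfect elimination order, but that verification is essentially the same work and not shorter than the direct argument above.
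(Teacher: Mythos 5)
The paper states Lemma~\ref{lem:cgbs} as a citation to Postnikov, Reiner, and Williams and gives no proof of its own, so there is no in-paper argument to compare against; your direct proof is correct. The forward direction rests on two sound facts: (i) if $G$ is in perfect elimination order and $U\seq[j]$, then $j$ is simplicial in $G[U]$ because its neighbours in $G[U]$ are a subset of its neighbours in $G_j$, which form a clique (connectivity of $G[U]$ is not actually needed for this step, only for the induction); and (ii) deleting a simplicial vertex from a connected graph on at least two vertices preserves connectivity, since any path through the vertex can be shortcut across the clique of its neighbours. The downward induction then gives $\{i_1,\ldots,i_s\}\in\cE$ for all $s\in[r]$, which is the prefix condition defining a chordal building set. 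Your contrapositive for the converse is also correct: a non-simplicial vertex $i$ with non-adjacent neighbours $a<b<i$ gives $\{a,b,i\}\in\cE$ but $\{a,b\}\notin\cE$, violating chordality. Your closing remark is also apt: the route via Lemma~\ref{lem:cbs-heo} and the corollary stated right after Lemma~\ref{lem:cgbs} is logically available but amounts to the same verification, so the direct argument is the natural one.
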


Note that not all chordal building sets are graphical.
For instance, the hypergraph $([n], \{[i]\mid i\in n\})$ is a chordal building set but is not graphical.
(The corresponding nestohedron is called the \emph{Stanley-Pitman polytope}~\cite{MR1902680}.)

Combining Lemma~\ref{lem:cgbs} and Lemma~\ref{lem:cbs-heo}, we obtain the following.

\begin{corollary}
A graphical building set $\cB(G)$ is in hyperfect elimination order if and only if $G$ is in perfect elimination order.
\end{corollary}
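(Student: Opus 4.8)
The plan is to obtain the statement by simply chaining the two preceding lemmas. First I would recall that, as observed immediately after its definition, $\cB(G)$ is always a building set, so Lemma~\ref{lem:cbs-heo} applies to it: the building set $\cB(G)$ is chordal if and only if it is in hyperfect elimination order. Then I would invoke Lemma~\ref{lem:cgbs}, which asserts that $\cB(G)$ is chordal if and only if $G$ is in perfect elimination order. Composing these two equivalences gives that $\cB(G)$ is in hyperfect elimination order if and only if $G$ is in perfect elimination order, which is exactly the claim.

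Since both ingredients are already established earlier in the paper, there is essentially no obstacle here; the only point worth double-checking is that the two equivalences genuinely refer to the same notion of chordality of $\cB(G)$, but this is immediate because the term \emph{chordal building set} is used consistently in both Lemma~\ref{lem:cbs-heo} and Lemma~\ref{lem:cgbs}. For completeness one could instead give a self-contained combinatorial argument, bypassing the intermediate notion of chordal building set: unfolding the first condition of the hyperfect elimination order for $\cB(G)$ turns it into a statement about connected vertex subsets of $G$ containing the vertex~$n$, and one verifies this is equivalent to $n$ being simplicial in $G$ together with the inductive hypothesis applied to $G-n$, using that the induced subhypergraph $\cB(G)_{n-1}$ equals $\cB(G-n)$. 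However, routing through Lemmas~\ref{lem:cbs-heo} and~\ref{lem:cgbs} avoids repeating that case analysis and is the cleaner route, so I would present only the two-line composition as the proof.
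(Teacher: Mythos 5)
Your proposal is correct and matches the paper exactly: the corollary is stated immediately after the sentence ``Combining Lemma~\ref{lem:cgbs} and Lemma~\ref{lem:cbs-heo}, we obtain the following,'' so the intended proof is precisely the two-line composition you give. Nothing further is needed.
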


This confirms our earlier claim that our Gray codes for acyclic orientations of hypergraphs in hyperfect elimination order, as given in Theorem~\ref{thm:mainhyper}, generalize the known Gray codes for elimination forests on chordal graphs described recently by Cardinal, Merino, and M\"utze~\cite{DBLP:conf/soda/CardinalMM22}.

\section{Acyclic reorientation lattices}
\label{sec:arl}

In this section, we consider order-theoretic properties of the set of all acyclic orientations of a graph and the existence of Hamilton paths and cycles in lattices defined from acyclic orientations.
In particular, we address a question due to Pilaud~\cite[Problem~51]{pilaud_2022} stated as Problem~\ref{prob:pilaud} in Section~\ref{sec:res2}.
In order to make sense of this question, we first define classes of acyclic digraphs and the lattices that are defined from their acyclic reorientations.

\subsection{Acyclic reorientation lattices of families of acyclic digraphs}

Given an acyclic digraph~$D$, an \emph{acyclic reorientation} of $D$ is an acyclic digraph obtained by flipping the orientation of some arcs of $D$.
The set of all acyclic reorientations of $D$ can be ordered by containment of the sets of arcs that are flipped w.r.t.~$D$; see Figure~\ref{fig:cong2}~(a).
We denote this poset by~$\AR_D$.
Pilaud~\cite{pilaud_2022} gave necessary and sufficient conditions on $D$ for $\AR_D$ to be a lattice.
When those conditions are satisfied, we refer to $\AR_D$ as the \emph{acyclic reorientation lattice} of $D$.

\subsubsection{Acyclic reorientation lattices of vertebrate digraphs}

We use the term \emph{oriented tree} to refer to a digraph whose underlying undirected graph is a tree, and \emph{oriented forest} to refer to a collection of disjoint oriented trees.
Following Pilaud~\cite{pilaud_2022}, we say that a digraph~$D$ is \emph{vertebrate} if the transitive reduction of every induced subgraph of~$D$ is an oriented forest.
It is easy to see that any vertebrate digraph must be acyclic; see~\eqref{eq:inclusion}.

\begin{theorem}[{\cite[Thm.~1]{pilaud_2022}}]
The poset~$\AR_D$ of acyclic reorientations of a digraph~$D$ is a lattice if and only if $D$ is vertebrate.
\end{theorem}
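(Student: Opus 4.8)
Recall that the elements of~$\AR_D$ are exactly the acyclic orientations of the underlying graph~$G$ of~$D$, ordered by inclusion of the set of arcs reversed with respect to~$D$, and that this poset has a least element (namely $D$) and a greatest element (the reversal $D^{\mathrm{op}}$ of all arcs, which is acyclic because $D$ is). Hence $\AR_D$ is a lattice if and only if every pair $O_1,O_2\in\AR_D$ has a join, and the plan is to decide exactly when joins exist.

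To this end, I would attach to each pair $O_1,O_2$, with reversal sets $S_1,S_2$, a canonical candidate join $\kappa(O_1,O_2)$ obtained by a forcing procedure: start from the partial orientation of~$G$ that reverses~$D$ exactly on the edges of $S_1\cup S_2$ (this is acyclic, as it agrees there with the acyclic orientation~$D^{\mathrm{op}}$); then repeatedly orient a still-unoriented edge~$uv$ as $u\to v$ whenever the current partial orientation already has a directed path from~$u$ to~$v$, iterating to a fixpoint; finally orient all remaining edges as in~$D$. Two short inductions along the order in which arcs are oriented then give: (i) every upper bound~$O'$ of $O_1,O_2$ agrees with each arc oriented during the forcing stage (otherwise $O'$ would contain a directed cycle) and reverses all of $S_1\cup S_2$, hence $O'\ge \kappa(O_1,O_2)$; and (ii) $\kappa(O_1,O_2)$ reverses $S_1\cup S_2$, so it is itself an upper bound of $O_1,O_2$ \emph{provided it is acyclic}. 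Thus, if $\kappa(O_1,O_2)$ is acyclic for every pair then it is always the join and $\AR_D$ is a lattice; and if $\kappa(O_1,O_2)$ fails to be acyclic for some pair, then the directed cycle created at the final step must contain at least two edges oriented as in~$D$, and reversing the arcs of this cycle along its two ``arms'' produces at least two incomparable minimal upper bounds of $O_1,O_2$ — so $\AR_D$ is not a lattice.

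It then remains to prove the purely graph-theoretic statement that \emph{the procedure~$\kappa$ is acyclic on all pairs if and only if~$D$ is vertebrate}. For one direction, from a failure of~$\kappa$ I would read off, inside the induced subdigraph of~$D$ on the vertices of the offending cycle, a family of cover relations forming an undirected cycle in the transitive reduction of that subdigraph, so the transitive reduction is not an oriented forest. For the converse, I would start from an induced subdigraph~$H$ of~$D$ that is minimal with a non-forest transitive reduction; then $T_H$ contains a cycle, each of whose edges is a genuine arc of~$H$ (a cover in a poset between two elements is always realized by an arc, since a longer directed path would pass through an intermediate element), and choosing a ``valley'' vertex of this cycle together with the two reversals of the arcs leaving it gives a pair in~$\AR_H$ on which the $\kappa$-construction visibly fails. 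The genuinely delicate point, which I expect to be the main obstacle, is to transfer this failure from~$H$ back to~$D$: one must verify that the relevant covers of~$P_H$ are still covers of~$P_D$ (so the two single-arc reversals are legal in~$\AR_D$) and that the ``repairing'' edges lie outside the forced set $S_1\cup S_2$, which is where the minimality of~$H$ — rather than merely the presence of a bad cycle — has to be used.

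For completeness I note an alternative route for the ``if'' direction via the poset of regions of the graphical hyperplane arrangement of~$G$, whose chambers are the acyclic orientations of~$G$ with base chamber~$D$: under this correspondence the vertebrate condition is precisely the instance of the known lattice criterion for posets of regions. Carrying it out, however, reduces to essentially the same combinatorial analysis, so I would present the direct forcing argument above.
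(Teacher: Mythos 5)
The paper does not prove this statement --- it is quoted verbatim with the citation \cite[Thm.~1]{pilaud_2022} and used as a black box --- so there is no internal proof to compare your attempt against; what follows assesses the proposal on its own terms.

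Your overall plan is reasonable: $\AR_D$ is bounded, so being a lattice reduces to existence of joins, and your forced closure $\kappa(O_1,O_2)$ is the right canonical candidate (every upper bound reverses at least $\mathrm{rev}(\kappa)$, so if $\kappa$ is acyclic it is the join). But two of the three load-bearing steps are not actually carried out. The ``two arms'' argument for non-existence of a join when $\kappa$ has a cycle is not a proof: you would need to show that the two proposed reorientations are acyclic in \emph{all} of $D$ (not merely on the offending cycle), and that both are minimal among upper bounds, neither of which is argued. A clean repair in the same spirit exists --- for every edge $e$ left unoriented by the forcing stage, adding $e$ in the $D$-direction to the forced partial orientation is still acyclic (else $e$ would have been forced), so extending by any compatible topological order yields an upper bound not reversing $e$; hence $\bigcap_{O'}\mathrm{rev}(O')=\mathrm{rev}(\kappa)$ over all upper bounds $O'$, and a least upper bound would have to coincide with $\kappa$, forcing $\kappa$ acyclic --- but this is a different argument from the one you wrote, and one of the two you describe has to actually be run.

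The more serious gap is the final equivalence ``$\kappa$ acyclic on all pairs if and only if $D$ is vertebrate,'' which is the entire content of the theorem and which you only sketch. For the direction ``bad $\kappa$-cycle $C$ $\Rightarrow$ $T_{D[V(C)]}$ not a forest,'' the inference is not immediate: $D[V(C)]$ may contain chords of $C$, and it is not clear that its transitive reduction still contains an undirected cycle; you would need some minimality of $C$ (or a more careful choice of vertex set) plus an argument. For the converse you explicitly flag the hard step --- lifting a non-forest transitive reduction from a minimal bad induced subdigraph $H$ back to $D$, checking that the needed covers of $P_H$ remain covers of $P_D$ and that the repairing arcs avoid the forced set --- and you do not resolve it. Until those pieces are filled in, what you have is a plausible strategy for Pilaud's theorem, not a proof.
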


\subsubsection{Acyclic reorientation lattices of peo-consistent digraphs}

A~digraph $D$ is \emph{peo-consistent} if its vertices can be labeled $1,\ldots,n$, such that either $n=1$ or the following three conditions are satisfied:
\begin{enumerate}[label=(\roman*),leftmargin=8mm, noitemsep, topsep=1pt plus 1pt]
\item the vertex~$n$ is a source or a sink of~$D$,
\item the vertex~$n$ is simplicial in the underlying undirected graph of $D$,
\item the digraph~$D-n$ is peo-consistent.
\end{enumerate}

In the following, we often denote a peo-consistent digraph~$D$ with the corresponding labeling of its vertices by~$1,\ldots,n$ as $D=([n],A)$.
Hence a peo-consistent digraph is an orientation of a chordal graph obtained by iteratively adding each vertex in perfect elimination order as a source or a sink in the digraph induced by its predecessors in the order.
Since the underlying undirected graph of a peo-consistent digraph~$D$ can be any graph in perfect elimination order, it has the unique parent-child property by Lemma~\ref{lem:upc-peo}.
The peo-consistent digraphs form a natural class of vertebrate digraphs; see~\eqref{eq:inclusion} and Figure~\ref{fig:classes}~(d).

\begin{lemma}
\label{lem:consistent}
Every peo-consistent digraph is vertebrate.
\end{lemma}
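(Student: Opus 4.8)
The plan is to prove, by induction on the number of vertices, two things at once: that peo-consistency is hereditary under taking induced subdigraphs, and that the transitive reduction of a peo-consistent digraph is an oriented forest; vertebracy of a peo-consistent $D$ then follows immediately by applying the second claim to each induced subdigraph $D[S]$. For heredity I would fix the labeling $1,\dots,n$ witnessing peo-consistency of $D=([n],A)$, take $S\subseteq[n]$ with $s=\max S$, and observe that $s$ is already a source or a sink of the larger digraph $D_s$ induced by $\{1,\dots,s\}$, and simplicial in its underlying graph; since $D[S]$ is an induced subdigraph of $D_s$, the vertex $s$ remains a source or a sink of $D[S]$, and simplicial, while $D[S]-s=D[S\setminus\{s\}]$ is peo-consistent by induction on $|S|$. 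Along the way I would record, by the same kind of one-line induction, that every peo-consistent digraph is acyclic, since adjoining a source or a sink to an acyclic digraph creates no directed cycle.

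The core is to show that the transitive reduction $T_D$ of a peo-consistent digraph $D=([n],A)$ is an oriented forest, again by induction on $n$, with $n=1$ trivial. For $n\ge 2$ the digraph $D_{n-1}=D-n$ is peo-consistent, so by the inductive hypothesis $T_{D_{n-1}}$ is an oriented forest, and it suffices to show that $T_D$ is obtained from $T_{D_{n-1}}$ by adjoining the new vertex $n$ together with \emph{at most one} incident arc, as attaching a pendant vertex to a forest keeps it a forest. This splits into two points. First, the arcs of $T_D$ with both endpoints in $[n-1]$ are exactly those of $T_{D_{n-1}}$: because $n$ is a source or a sink of $D$, no directed path of $D$ between two vertices of $[n-1]$ passes through $n$, and $n$ is a minimal or a maximal element of the partial order of $D$; hence that partial order restricted to $[n-1]$ coincides with the partial order of $D-n$, and so do their cover relations. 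Second, the vertex $n$ is incident with at most one arc of $T_D$: the underlying undirected graph $G$ of $D$ is in perfect elimination order with the given labeling, since each vertex is simplicial in the subgraph induced by its predecessors, so by Lemma~\ref{lem:upc-peo} $G$ has the unique parent-child property; applied to the acyclic orientation $D\in\AO_G$, this gives in-degree and out-degree at most $1$ for $n$ in $T_D$, and since $n$ is a source or a sink of $D$ one of these two degrees is already $0$.

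I expect the only mildly delicate step to be the first of the two points above, namely checking that adjoining the source/sink $n$ to $D-n$ alters neither the reachability relation nor the transitive reduction on $[n-1]$; the rest is either immediate or handed to us directly by Lemma~\ref{lem:upc-peo}. If a self-contained proof of the degree bound at $n$ is preferred, avoiding any appeal to the unique parent-child property, one can instead argue directly: by reversing all arcs assume $n$ is a source, so its out-neighborhood $N$ is a clique and $D[N]$ is a transitive tournament with a unique source $m$; then among the arcs leaving $n$ only $n\to m$ is non-transitive, the others being shortcut through predecessors of their heads in $D[N]$, so again $n$ is a pendant vertex of $T_D$. I would take the version based on Lemma~\ref{lem:upc-peo} as the main proof and mention this alternative only as a remark.
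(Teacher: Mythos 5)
Your proposal is correct and follows essentially the same inductive approach as the paper: remove the last vertex~$n$, apply the inductive hypothesis to $T_{D-n}$, and show that $T_D$ adds $n$ as a pendant vertex, with heredity of peo-consistency under taking induced subdigraphs handling the general case. The one genuine difference lies in how the degree bound at $n$ is obtained: the paper argues directly that the simplicial neighborhood of $n$ is totally ordered in $D-n$ and that $n$ attaches only to its minimum or maximum element---exactly the self-contained variant you sketch in your closing remark---whereas your main version invokes Lemma~\ref{lem:upc-peo}, a legitimate shortcut since that lemma is already available and it lets you bypass the clique-tournament argument. You are also more explicit than the paper in checking that $T_D$ restricted to $[n-1]$ coincides with $T_{D-n}$, a consequence of $n$ being a source or sink that the paper passes over silently but that is genuinely needed for the pendant-vertex claim.
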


The converse of Lemma~\ref{lem:consistent} does not hold, i.e., not every vertebrate digraph is peo-consistent; see Figure~\ref{fig:classes}~(c).

\begin{proof}
We proceed by induction on the number of vertices, and suppose that the statement holds for all graphs on less than $n$ vertices.
Observe that if $D=([n],A)$ is a peo-consistent digraph, then so is any of its induced subgraph.
Hence from the induction hypothesis, we only need to prove that the transitive reduction~$T_D$ of~$D$ is an oriented tree.
Consider the transitive reduction~$T_{D-n}$ of~$D-n$.
We claim that $T_D$ is obtained by adding the vertex~$n$ as a new leaf to $T_{D-n}$, with either a single incoming arc or a single outgoing arc.
Indeed, since the vertex~$n$ is simplicial, its neighbors must form a directed path in~$T_{D-n}$.
And since $n$ is either added to~$D$ as a source or a sink, there is a single new arc in the transitive reduction, which connects it to either its first or last neighbor in the directed path, respectively.
Therefore, $T_D$ is an oriented tree, as required.
\end{proof}

\subsubsection{Acyclic reorientation lattices of skeletal digraphs}

We say that a digraph~$D$ is \emph{filled} if for any directed path $v_1\rightarrow \cdots\rightarrow v_k$ in~$D$, if the arc~$v_1\rightarrow v_k$ belongs to~$D$, then all arcs $v_i\rightarrow v_j$, $1\leq i<j\leq k$, also belong to~$D$.
A digraph is called \emph{skeletal} if it is both vertebrate and filled.
An alternative definition of a skeletal digraph~$D$ is that $D$ is obtained from an oriented forest by replacing some directed paths by acyclic cliques.
The motivation for introducing skeletal digraphs is the following result due to Pilaud.

\begin{theorem}[{\cite[Thm.~3]{pilaud_2022}}]
\label{lem:semid}
The poset~$\AR_D$ of acyclic reorientations of an acyclic digraph~$D$ is a semidistributive lattice if and only if $D$ is skeletal.
\end{theorem}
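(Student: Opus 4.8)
Both implications become most transparent once one identifies $\AR_D$ with the poset of regions of the graphical arrangement of the underlying undirected graph $G$ of $D$, based at the chamber of $D$; equivalently, and this is the viewpoint I would work in, one encodes each acyclic reorientation $D'$ of $D$ by its set $\mathrm{inv}(D')\seq A$ of arcs flipped relative to $D$, orders these sets by inclusion, and uses Pilaud's closure operator describing which subsets of $A$ arise as inversion sets of acyclic reorientations. Recall that $\AR_D$ is a lattice precisely when $D$ is vertebrate \cite[Thm.~1]{pilaud_2022}, that \emph{skeletal} means \emph{vertebrate and filled}, and that both vertebrateness and filledness are inherited by induced subdigraphs (a directed path with a chord in $D[V']$ is one in $D$, whose intermediate arcs lie between vertices of $V'$), so every induced subdigraph of a skeletal digraph is skeletal.

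For the forward direction I would argue by contraposition. If $D$ is not vertebrate, then $\AR_D$ is not even a lattice \cite[Thm.~1]{pilaud_2022}, hence not a semidistributive lattice. So assume $D$ is vertebrate but not filled: there is a directed path $v_1\to\cdots\to v_k$ in $D$ with $v_1\to v_k$ present in $D$ but some arc $v_i\to v_j$ ($1\le i<j\le k$) absent; choose such a configuration with $k$ minimal. The plan is to pass to the induced subdigraph $H:=D[\{v_1,\ldots,v_k\}]$, which is still vertebrate (so $\AR_H$ is a lattice) and still not filled, and to exhibit three explicit reorientations of $H$ -- conveniently described by linear orders of $\{v_1,\ldots,v_k\}$ realizing them -- witnessing a failure of join- or meet-semidistributivity in $\AR_H$. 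The point is that the missing arc $v_i\to v_j$ lets the closure of a union of two inversion sets jump, so that for suitable $x,y,z\in\AR_H$ the chord $v_1v_k$ gets flipped in $x\vee y=x\vee z$ but not in $x\vee(y\wedge z)$. To transport this back to $\AR_D$ I would use that deleting a vertex of $D$ induces a lattice quotient $\AR_D\twoheadrightarrow\AR_{D-v}$, together with the fact that lattice quotients of finite semidistributive lattices are again semidistributive; iterating, if $\AR_D$ were semidistributive then so would be $\AR_H$, a contradiction. (The smallest forbidden gadget is $k=4$ with $H$ the $4$-cycle oriented $v_1\to v_2\to v_3\to v_4$, $v_1\to v_4$; its reorientation lattice has $14$ elements and can be checked directly not to be semidistributive.)

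For the backward direction, assume $D$ is skeletal. Here I would establish semidistributivity through the standard criterion that a finite lattice is join- (resp.\ meet-) semidistributive if and only if every element has a canonical join (resp.\ meet) representation. Given a reorientation $D'$, its lower covers in $\AR_D$ correspond to those arcs $a\in\mathrm{inv}(D')$ whose removal from $\mathrm{inv}(D')$ again yields a valid (closed) inversion set; each such lower cover determines a join-irreducible $j_a\le D'$. The plan is to prove that $D'=\bigvee_a j_a$ is the canonical join representation -- that it is irredundant and refines every irredundant join representation of $D'$ -- and dually for meets. The filled hypothesis enters exactly here: when one computes this join as the closure of $\bigcup_a\mathrm{inv}(j_a)$, filledness of $D$ guarantees that the closure introduces no arc outside $\mathrm{inv}(D')$, i.e.\ the join does not overshoot $D'$. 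An alternative route that avoids a from-scratch analysis of the closure operator would be to invoke a known characterization of when a poset of regions is a semidistributive lattice and to verify that filledness is its translation for graphical arrangements.

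The main obstacle is this backward direction, and specifically the claim that filledness of $D$ makes the inversion-set closure operator compatible with canonical join and meet representations; understanding precisely how a missing transitive arc of $D$ can let the closure create unexpected flips is the crux of the argument. By comparison, the forward direction reduces to a finite check on a small forbidden subdigraph, propagated by the two soft facts that vertex deletion is a lattice quotient and that semidistributivity descends to quotients.
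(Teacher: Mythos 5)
The paper does not prove this statement: it is Theorem~3 of \cite{pilaud_2022}, imported as a black box (indeed it is labeled \texttt{lem:semid} and stated with the citation in its name), so there is no in-paper argument to compare against. Evaluating your proposal on its own terms, the forward direction has a genuine gap. You reduce to the small induced subdigraph $H=D[\{v_1,\ldots,v_k\}]$ and transport the failure of semidistributivity from $\AR_H$ back to $\AR_D$ by claiming that deleting any vertex induces a lattice quotient $\AR_D\twoheadrightarrow\AR_{D-v}$. That claim is false already for $D$ the transitively oriented triangle $1\to2$, $2\to3$, $1\to3$, which is skeletal: $\AR_D$ is the weak order on $S_3$, and deleting the \emph{middle} vertex $2$ gives fibers $\{123,132,213\}$ and $\{312,231,321\}$. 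The first fiber is not an interval of the weak order (it contains $132$ and $213$ but not $132\vee 213=321$), so the fibers cannot be the classes of any lattice congruence and the restriction map is not a lattice homomorphism. Deleting the \emph{last} vertex of a peo-consistent ordering does behave like a quotient map on rails (this is precisely the structure the paper exploits via Lemma~\ref{lem:projection}), but your contrapositive needs to delete the vertices \emph{outside} the bad directed path $v_1,\ldots,v_k$, and there is no reason for those vertices to be simplicial sources or sinks. So the descent from $D$ to $H$ does not go through as stated.

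On the backward direction you are candid that it is only a plan, and I agree that the crux -- that filledness of $D$ prevents the inversion-set closure from overshooting when forming the join of the lower covers, so that canonical join (and dually meet) representations exist -- is exactly what needs a real argument and is not supplied. Your alternative suggestion, to invoke a known semidistributivity criterion for posets of regions and translate it into the filled condition for graphical arrangements, is closer to how one would actually prove this (and to Pilaud's approach), but it would still require pinning down the precise statement being invoked and verifying the translation. As written, neither implication is established.
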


We now prove that the class of skeletal digraphs refine that of peo-consistent digraphs; see~\eqref{eq:inclusion}.

\begin{lemma}
\label{lem:skeletal}
Every skeletal digraph is peo-consistent.
\end{lemma}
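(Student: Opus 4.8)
The plan is to reduce the statement to one clean fact and then finish by a short induction: \emph{every nonempty skeletal digraph $D$ has a vertex $v$ that is a source or a sink of $D$ and is simplicial in the underlying undirected graph of $D$}. Granting this, I would label such a $v$ as the top vertex $n$, and observe that $D-v$ is again skeletal: being vertebrate is clearly inherited by induced subdigraphs, and so is being filled, since any directed path in $D-v$ together with a present chord $v_1\to v_k$ is also such a configuration in $D$, hence by filledness of $D$ all arcs among these path vertices lie in $D$, and none of them involves $v$. By induction on $|V(D)|$ the digraph $D-v$ is peo-consistent, and conditions~(i)--(iii) of peo-consistency for $D$ are exactly what has just been checked; the base case $|V(D)|\le 1$ is trivial. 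This shows $D$ is peo-consistent.

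To prove the key fact I would use the structural description of skeletal digraphs recorded above, namely that $D$ is obtained from an oriented forest $F$ by replacing a family of pairwise vertex-disjoint directed paths of $F$ with acyclic cliques (transitive tournaments), the remaining vertices being trivial one-vertex paths. This partitions $V(D)$ into \emph{blocks} $B_1,\dots,B_t$, where each $B_s$ is the vertex set of a directed path $q_1\to\cdots\to q_m$ of $F$, $D[B_s]$ is the transitive tournament on that ordering, and all arcs of $D$ between distinct blocks are the arcs of $F$; contracting each block to a point turns $F$ into a forest on the blocks. If $t=1$ then $D$ is a transitive tournament and its unique source is simplicial, so assume $t\ge 2$ and pick a block $B$ that is a leaf or an isolated vertex of the block forest. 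Then $B$ is incident with at most one arc of $F$, meeting $B$ in at most one vertex $b$. Writing $B$ as $q_1\to\cdots\to q_m$, I would take $v\in\{q_1,q_m\}\setminus\{b\}$; this set is nonempty unless $m=1$ and $b=q_1$, in which case $v:=q_1$ is a vertex of degree at most one in $D$, hence trivially a simplicial source or sink. For the chosen $v$, every arc of $D$ at $v$ lies inside $B$ (the only possible external arc of $B$ is incident with $b\neq v$), and within the transitive tournament $D[B]$ the vertex $q_1$ is the source and $q_m$ the sink; therefore $v$ is a source or a sink of $D$, and its neighborhood $B\setminus\{v\}$ is a clique, so $v$ is simplicial. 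This establishes the key fact.

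I expect the crux of the whole proof to be this key fact, and within it the \emph{simpliciality} of the extracted source/sink. The naive attempt — peel off a leaf of the transitive reduction $T_D$ — gives a vertex that is automatically a source or a sink but need \emph{not} be simplicial, and in fact the filled hypothesis is genuinely needed here (vertebrate digraphs alone, e.g.\ the one in Figure~\ref{fig:classes}(c), provide no such guarantee). Using the block decomposition sidesteps this by choosing the source/sink at the ``far end'' of a leaf block, whose neighborhood is forced to be a clique. The remaining ingredients — that skeletality passes to $D-v$, that reversing all arcs of a skeletal digraph yields a skeletal digraph (so the ``source'' and ``sink'' cases are symmetric), and that single-vertex blocks behave correctly — are all routine and I would not belabor them.
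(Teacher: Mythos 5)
Your overall strategy matches the paper's: reduce to showing that every nonempty skeletal digraph has a \emph{terminal} vertex (a source or sink that is simplicial), note that skeletality passes to induced subdigraphs, and induct. That reduction and the inheritance argument for vertebrate/filled are correct.

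The gap is in your proof of the key fact. You base the argument on a structural claim that is strictly stronger than the ``alternative definition'' recorded in the paper, namely that $D$ is obtained from an oriented forest $F$ by replacing a family of \emph{pairwise vertex-disjoint} directed paths with transitive tournaments, so that the maximal cliques partition $V(D)$ into blocks forming a block-forest. This disjointness assumption is false in general. Consider $D$ on $V=\{1,2,3,4\}$ with arcs $1\to 2$, $2\to 3$, $3\to 4$, $1\to 3$, $2\to 4$. One checks directly that $D$ is vertebrate (its transitive reduction is the path $1\to 2\to 3\to 4$, and every induced subgraph has forest transitive reduction) and filled (the only relevant configurations are $1\to 2\to 3$ with chord $1\to 3$ and $2\to 3\to 4$ with chord $2\to 4$, both completed), hence $D$ is skeletal. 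Its two maximal cliques are $\{1,2,3\}$ and $\{2,3,4\}$, which share $\{2,3\}$; the only oriented spanning forest $F\subseteq D$ from which $D$ can be obtained by clique-filling is the path $1\to 2\to 3\to 4$, and the two filled paths $1\to 2\to 3$ and $2\to 3\to 4$ necessarily overlap. So there is no partition into blocks, no ``block forest,'' and your selection of a leaf block and its far endpoint has nothing to be applied to. (The lemma's conclusion does hold for this $D$ --- vertices $1$ and $4$ are terminal --- but your argument does not reach it.) The paper avoids this entirely by working with the transitive reduction $T_D$ directly: after reducing to the case where every source and sink of $T_D$ has degree one, it defines a branch vertex $b(v)$ for each non-terminal source or sink $v$ and runs a counting argument over the in- and out-degrees of the internal vertices to force at least two terminal vertices. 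If you want to keep the spirit of your argument, you would need either to prove the disjoint-block claim (which, as shown, fails) or to replace it with a correct structural statement, e.g.\ allowing overlapping filled paths and then reasoning about the clique tree of the (strongly) chordal underlying graph rather than a vertex partition.
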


The converse of Lemma~\ref{lem:skeletal} does not hold, i.e., not every peo-consistent digraph is skeletal; see Figure~\ref{fig:classes}~(b).

\begin{proof}
It suffices to show that in every skeletal digraph~$D$, one can find a source or sink that is simplicial in the underlying undirected graph, i.e., whose neighborhood in~$D$ is a clique.
We refer to such a vertex as a \emph{terminal} vertex.
If a terminal vertex~$v$ exists, then we can remove it and iterate the same argument on the remaining digraph~$D-v$, which is also skeletal.
In fact, we proceed to prove that every skeletal digraph~$D$ has at least two terminal vertices.

We consider a connected skeletal digraph~$D$ and its transitive reduction~$T:=T_D$, which is an oriented tree.
We first argue that we can assume without loss of generality that every source or sink has degree exactly~1.
Indeed, suppose that this is not the case.
Then we partition the arc set of~$T$ into a collection~$\cT$ of subtrees, such that in every subtree, every source or sink has degree exactly~1, and moreover every arc of~$D$ joins two vertices from the same subtree in~$\cT$.
Specifically, for each source~$v$ of out-degree~$d>1$ in~$T$, we split the arc set of~$T$ into $d$ subtrees connected to~$v$, assigning all arcs in the same connected component of~$T-v$ together with the arc that connects this component to~$v$ to the same subtree; see Figure~\ref{fig:skeletal}~(a).
We repeat this for every source of out-degree $>1$, and we proceed similarly for each sink of in-degree~$>1$.
Suppose that we have $|\cT|=k$ trees after this partitioning stage.
If we can find two terminal vertices in~$D[T']$ for every $T'\in\cT$, i.e., in the subgraph of~$D$ induced by the subtree~$T'$, then at least $2k-(2k-1)=2$ of those vertices will also be terminal vertices in~$D$.
This is because $D$ is obtained from gluing together the graphs~$D[T']$, $T'\in\cT$, in a tree-like fashion.
So for the rest of the proof we assume without loss of generality that sources and sinks of~$D$ have degree exactly~1.

Let $S_0$ be the set of sources and $S_1$ the set of sinks of~$D$.
We denote by $M$ the set of all vertices of~$D$ that are neither in~$S_0$ nor in~$S_1$; see Figure~\ref{fig:skeletal}~(b).
In other words, every vertex in~$M$ has at least one in-neighbor and at least one out-neighbor.
We observe that $v\in S_0\cup S_1$ is a terminal vertex if its neighborhood in~$D$ induces a directed path in~$T$.
Indeed, since $D$ is filled, $v$ must simplicial in the underlying undirected graph, and hence $v$ is a terminal vertex.

\begin{figure}
\centering
\makebox[0cm]{ 
\includegraphics{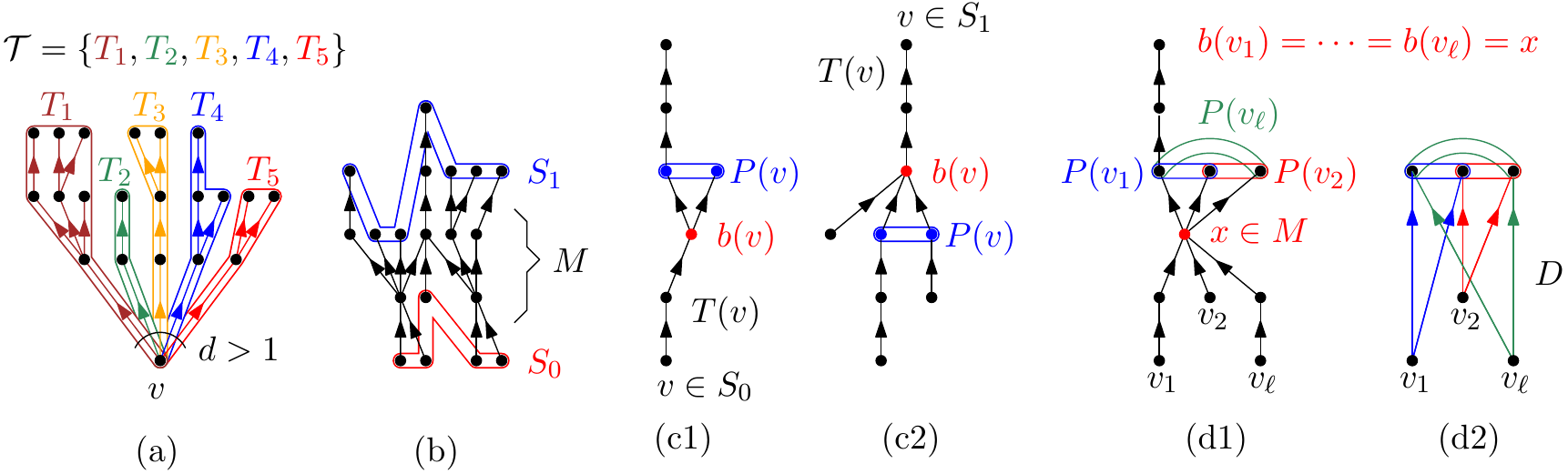}
}
\caption{Illustration of the proof of Lemma~\ref{lem:skeletal}.}
\label{fig:skeletal}
\end{figure}

On the other hand, consider a vertex $v\in S_0$ that is not a terminal vertex; see Figure~\ref{fig:skeletal}~(c1).
Then the neighbors of~$v$ in~$D$ induce an oriented subtree~$T(v)$ of~$T$ rooted at~$v$, all arcs of which are oriented away from~$v$.
As $v$ is not a terminal vertex, $T(v)$ is not a single directed path, but it has a \emph{branch vertex} $b(v)\in M$ with out-degree at least~2 in~$T(v)$.
We denote by $P(v)$ one pair of out-neighbors of~$b(v)$ in~$T(v)$ (pick two arbitrarily if there are more than two), which by definition are also out-neighbors of~$v$ in~$D$.

Consider a vertex $x\in M$ and all sources $v_1,v_2,\ldots ,v_\ell\in S_0$ that have $x$ as their common branch vertex, i.e., that satisfy $b(v_1)=b(v_2)=\cdots =b(v_{\ell})=x$; see Figure~\ref{fig:skeletal}~(d1).
We observe that we must have $\ell\leq d^+(x)-1$.
Indeed, if $\ell>d^+(x)-1$, then the transitive reduction of the subgraph induced by $v_1,v_2,\ldots,v_{\ell}$ and the vertices in $P(v_1),P(v_2),\ldots,P(v_{\ell})$ in $D$ contains a cycle, no arc of which is transitive, contradicting the fact that $D$ is vertebrate; see Figure~\ref{fig:skeletal}~(d2).

Symmetrically, we can consider a vertex $v\in S_1$ that is not a terminal vertex, and define a corresponding branch vertex $b(v)\in M$; see Figure~\ref{fig:skeletal}~(c2).
Then similarly, for a vertex $x\in M$, at most $d^-(x)-1$ vertices from~$S_1$ can have $x$ as their common branch vertex.

We now apply a counting argument.
We have
\begin{equation*}
|S_0|=1 + \sum_{x\in M} (d^-(x) - 1) \quad \text{and} \quad
|S_1|=1 + \sum_{x\in M} (d^+(x) - 1).
\end{equation*}
Suppose without loss of generality that $|S_0|\geq |S_1|$, and first consider the case $|S_0|=|S_1|$.
From the previous observations and the equalities above, we can have at most $|S_1|-1=|S_0|-1$ branch vertices~$b(v)$ for the vertices $v\in S_0$, so one such vertex must be a terminal vertex.
Similarly, we can have at most $|S_0|-1=|S_1|-1$ branch vertices $b(v)$ for the vertices $v\in S_1$, and one such vertex must also be a terminal vertex.
We therefore obtain two terminal vertices.
On the other hand, if $|S_0|>|S_1|$, then we can have at most $|S_1|-1\leq |S_0|-2$ branch vertices~$b(v)$ for the vertices $v\in S_0$, and at least two of them must be terminal vertices.
Hence in all cases, we obtain two terminal vertices, as claimed.
\end{proof}

In fact, there exist chordal graphs, no orientation of which is skeletal.
An example is the \emph{complete $k$-sun}, defined as the graph that is obtained from a $2k$-cycle on the vertices $v_1,\ldots,v_{2k}$ by adding an additional edge between every pair of vertices~$v_{2i}$ and~$v_{2j}$, for every $i\ne j\in [k]$.
The transitive reduction of the complete graph on $v_2,v_4,\ldots,v_{2k}$ is a path, and one can argue that one of the vertices~$v_{2i+1}$ joins two vertices in distance strictly larger than one along that path.
Then this vertex~$v_{2i+1}$ cannot have out-degree~2 or in-degree~2, as this would violate the filled property, and it cannot have out-degree and in-degree~1, as this would violate the vertebrate property.
As a consequence of this observation, if $D$ is skeletal then its underlying undirected graph does not contain a complete $k$-sun as induced subgraph.
Farber~\cite{MR685625} proved that a chordal graph is \emph{strongly chordal} if and only if it contains no induced complete $k$-sun (called trampoline there).
Therefore, if $D$ is skeletal, then its underlying undirected graph is strongly chordal.

\subsection{Quotients of acyclic reorientation lattices}

\subsubsection{Lattice congruences}

Recall the terminology introduced in Section~\ref{sec:prelim}.
We consider an equivalence relation~$\equiv$ on elements of a lattice~$L$.
For an element $x\in L$, we denote by $[x]_{\equiv}$ the equivalence class of $\equiv$ to which $x$ belongs.
An equivalence relation $\equiv$ on $L$ is a \emph{lattice congruence} if it respects joins and meets, i.e., if
\begin{equation*}
(x\equiv x' \ \text{and}\ y\equiv y') \Longrightarrow (x\vee y\equiv x'\vee y'\ \ \text{and}\ x\wedge y\equiv x'\wedge y').
\end{equation*}
For a lattice congruence $\equiv$, the \emph{lattice quotient} $L/{\equiv}$ is the poset on the set of the equivalence classes of~$\equiv$, where for any two equivalence classes $X,Y$, we have $X < Y$ if and only if there is an~$x\in X$ and a~$y\in Y$ such that~$x<y$ in~$L$.
We will need the following well-known lemma, which is a direct consequence of the definition of lattice congruences.

\begin{lemma}
\label{lem:interval}
For any lattice congruence~$\equiv$ of a finite lattice~$L$, and any element $x\in L$, the equivalence class $[x]_{\equiv}$ is an interval of~$L$.
\end{lemma}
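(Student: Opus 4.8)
The plan is to show that the class $C:=[x]_{\equiv}$ coincides with the interval $[m,M]$, where $m:=\bigwedge_{c\in C}c$ and $M:=\bigvee_{c\in C}c$; both are well-defined elements of $L$ since $L$ is finite, hence $C$ is finite and nonempty.

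First I would check that $m$ and $M$ themselves lie in $C$. For $c,c'\in C$ the congruence property applied to $c\equiv x$ and $c'\equiv x$ gives $c\wedge c'\equiv x\wedge x=x$ and $c\vee c'\equiv x\vee x=x$, so $c\wedge c',c\vee c'\in C$; a routine induction on the cardinality of a finite subset of $C$ then shows that the meet and the join of any finite collection of elements of $C$ again belong to $C$. Taking the whole set $C$ gives $m\in C$ and $M\in C$.

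Next I would establish the two inclusions. The inclusion $C\subseteq[m,M]$ is immediate, since $m$ is a lower bound and $M$ an upper bound of $C$. For the reverse inclusion, take any $z\in L$ with $m\le z\le M$. Since $m\equiv x\equiv M$, we have $m\equiv M$, so the congruence property applied to $z\equiv z$ and $m\equiv M$ yields $z\vee m\equiv z\vee M$; using $z\vee m=z$ (because $m\le z$) and $z\vee M=M$ (because $z\le M$), this reads $z\equiv M\equiv x$, i.e.\ $z\in C$. Combining the two inclusions gives $C=[m,M]$, an interval of $L$.

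I do not anticipate a genuine obstacle, as this is a standard fact about lattice congruences. The only step deserving explicit mention is the passage from the two-element congruence property to the statement that the extremal elements $m$ and $M$ of the class lie in the class, handled by the induction above; everything else is a one-line manipulation with $\vee$ (dually, one could argue with $z\wedge m\equiv z\wedge M$).
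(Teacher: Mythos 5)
The paper states Lemma~\ref{lem:interval} as ``well-known'' and ``a direct consequence of the definition of lattice congruences'' without supplying a proof, so there is nothing to compare against. Your proof is correct and is the standard argument: closure of a class under binary meets/joins extends by induction to the extremal elements $m$ and $M$, and the congruence applied to $z\equiv z$ and $m\equiv M$ (together with $z\vee m=z$ and $z\vee M=M$) pulls any $z\in[m,M]$ into the class. The only minor point worth flagging is that the paper's definition of interval uses strict inequalities ($x<z<y$), which is evidently a typo for $x\le z\le y$ (otherwise singleton classes would never be intervals); your proof correctly uses the closed-interval convention.
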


The definition of a lattice congruence gives rise to so-called \emph{forcing rules}.
These rules state that if some pair of elements of a lattice are congruent, then so must be some other pairs.
We now state two forcing rules that we need in the following; see Figure~\ref{fig:forcing}.
A \emph{diamond} is a four-element poset $(\{a,b,c,d\},<)$ with $a<b<d$ and $a<c<d$ and no other cover relations.
A \emph{hexagon} is a six-element poset $(\{a,b,c,d,e,f\},<)$ with $a<b<d<f$ and $a<c<e<f$ and no other cover relations.

\begin{figure}[h!]
\includegraphics{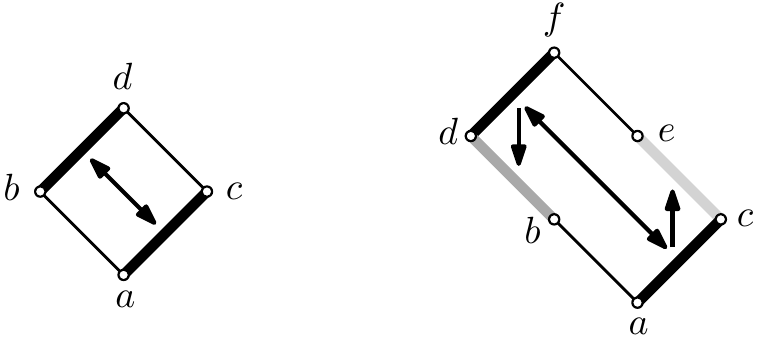}
\caption{Forcing rules in a lattice congruence: diamond rule (left) and hexagon rule (right).
Edges indicate cover relations, and bold edges indicate that the two elements belong to the same equivalence class.
The arrows indicate implications.}
\label{fig:forcing}
\end{figure}

\begin{lemma}
\label{lem:forcing}
Let $\equiv$ be a congruence of a lattice~$L$.
\begin{description}
\item[Diamond rule] For every diamond sublattice $\{a,b,c,d\}$ of~$L$ with $a<b<d$ and $a<c<d$, we have $a\equiv c\Leftrightarrow b\equiv d$.
\item[Hexagon rule] For every hexagon sublattice $\{a,b,c,d,e,f\}$ of~$L$ with $a<b<d<f$ and $a<c<e<f$, we have $a\equiv c\Leftrightarrow d\equiv f$ and $(a\equiv c \text{ and } d\equiv f)\Rightarrow (b\equiv d \text{ and } c\equiv e)$.
\end{description}
\end{lemma}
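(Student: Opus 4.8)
The plan is to derive every claimed implication directly from the defining property of a lattice congruence, that $\equiv$ respects joins and meets: starting from a congruence of two elements, I form a join or a meet with a suitably chosen third element on both sides, and read off the outcome inside the diamond or the hexagon. No appeal to Lemma~\ref{lem:interval} is needed, although some steps could also be phrased through it.

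For the \emph{diamond rule}, I write the diamond sublattice as $\{a,b,c,d\}$ with $a<b<d$ and $a<c<d$, so that $b\wedge c=a$ and $b\vee c=d$ hold in $L$. If $a\equiv c$, joining both sides with $b$ gives $b=b\vee a\equiv b\vee c=d$; conversely, if $b\equiv d$, meeting both sides with $c$ gives $a=b\wedge c\equiv d\wedge c=c$. This settles the diamond rule.

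For the \emph{hexagon rule}, I write the hexagon sublattice as $\{a,b,c,d,e,f\}$ with $a<b<d<f$ and $a<c<e<f$. The first step is to record the joins and meets I will use: since the six elements form a sublattice of $L$, these operations agree with the ones computed inside the hexagon, whose entire order relation is pinned down by the listed cover relations, and a direct check gives $b\vee c=c\vee d=f$ and $c\wedge d=b\wedge e=a$, while $a\vee b=b$, $b\wedge d=b$, $c\wedge f=c$, $d\wedge f=d$ and $e\wedge f=e$ are immediate from the stated comparabilities. The equivalence $a\equiv c\Leftrightarrow d\equiv f$ is then proved exactly as for the diamond: $a\equiv c$ forces $d=a\vee d\equiv c\vee d=f$, and $d\equiv f$ forces $a=d\wedge c\equiv f\wedge c=c$. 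For the remaining implication, assume $a\equiv c$ (equivalently $d\equiv f$, by what was just shown). Joining $a\equiv c$ with $b$ yields $b=a\vee b\equiv c\vee b=f$, so $b\equiv f$; meeting $b\equiv f$ with $d$ yields $b=b\wedge d\equiv f\wedge d=d$, so $b\equiv d$; and meeting $b\equiv f$ with $e$ yields $a=b\wedge e\equiv f\wedge e=e$, so $c\equiv a\equiv e$ by transitivity. This gives both $b\equiv d$ and $c\equiv e$, completing the hexagon rule.

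The argument is essentially routine; the only point needing some care is the bookkeeping of which joins and meets are actually invoked, together with the verification that, in the hexagon, $b\vee c$, $c\vee d$, $c\wedge d$ and $b\wedge e$ are as claimed — and this is exactly where the hypothesis that there are no cover relations beyond the listed ones is used. Forcing rules of this kind are standard in the theory of lattice congruences; see~\cite{MR3645056}.
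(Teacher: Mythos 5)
Your proof is correct, and your diamond argument is literally the paper's. The hexagon argument is organized a little differently: the paper first shows $a\equiv c\Rightarrow a\vee b\equiv c\vee b\Rightarrow b\equiv f$ and then invokes Lemma~\ref{lem:interval} (equivalence classes are intervals) to read off $b\equiv d$ and $d\equiv f$ from $b<d<f$; symmetrically, $d\equiv f\Rightarrow d\wedge e\equiv f\wedge e\Rightarrow a\equiv e$, and the interval property gives $a\equiv c$ and $c\equiv e$. You instead establish $a\equiv c\Leftrightarrow d\equiv f$ directly by joining with $d$ and meeting with $c$, and then derive $b\equiv d$ and $c\equiv e$ from $b\equiv f$ by two further meets, so you never need the interval lemma. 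That makes your argument slightly more self-contained (it rests only on the join/meet compatibility of $\equiv$) at the cost of a couple of extra lattice computations; the paper's version is marginally shorter but leans on Lemma~\ref{lem:interval}, which is itself a standard consequence of the same compatibility. Your verification of the joins and meets inside the hexagon, $b\vee c=c\vee d=f$ and $c\wedge d=b\wedge e=a$, is accurate, and you are right that this is exactly where the absence of extra cover relations enters.
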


\begin{proof}
The statements are derived by elementary applications of the definition of lattice congruences.
For diamonds, we have $a\equiv c\Rightarrow a\vee b\equiv c\vee b \Rightarrow b\equiv d$.
Symmetrically, we have $b\equiv d\Rightarrow b\wedge c\equiv d\wedge c\Rightarrow a\equiv c$.

For hexagons, we again apply the definition of a lattice congruence as follows: $a\equiv c\Rightarrow a\vee b \equiv c\vee b\Rightarrow b\equiv f$.
As equivalence classes are intervals, we also have $b\equiv d$ and $d\equiv f$.
Symmetrically, from $d\equiv f$ we obtain $d\wedge e\equiv f\wedge e\Rightarrow a\equiv e$ and hence $a\equiv c$ and~$c\equiv e$.
\end{proof}

In what follows, we consider quotients $\AR_D/{\equiv}$ of the acyclic reorientation lattice~$\AR_D$ of an acyclic digraph~$D$.

We emphasize that the diamond and hexagon rule stated in Lemma~\ref{lem:forcing} are necessary, but may not be sufficient to completely define the forcing relations for congruences of~$\AR_D$.
It is known that such local forcing rules are sufficient whenever the lattice $\AR_D$ is \emph{polygonal}~\cite[Thm.~9-6.5]{MR3645055}, which is the case if and only if it is semidistributive~\cite[Thm.~9-3.8 and 9-6.10]{MR3645055}.
Hence from Lemma~\ref{lem:semid}, $\AR_D$ is polygonal if and only if $D$ is skeletal, and in that case the two rules in Lemma~\ref{lem:forcing} completely characterize forcing relations in congruences of~$\AR_D$.
Note that starting from Section~\ref{sec:quotient-algo}, we will only assume that $D$ is peo-consistent, and not necessarily skeletal, so the diamond and hexagon rule are necessary, but may not be sufficient to characterize all forcing relations in a congruence (but our proof works regardless).

\subsubsection{Quotientopes}

Given a skeletal digraph~$D$, Pilaud~\cite{pilaud_2022} realizes any lattice quotient of~$\AR_D$ as a polytope, called a \emph{quotientope}.
Previously, the notion of quotientope has been used for the polytopal realization of lattice quotients of the weak order on permutations in the same manner~\cite{MR3964495,MR4311892}.
Recall that the weak order on permutations is~$\AR_D$ when $D$ is an acyclic complete graph.
The cover graph of a lattice quotient is exactly the skeleton of the corresponding quotientope.
Therefore, Problem~\ref{prob:pilaud} is equivalent to asking whether the skeleta of these quotientopes admit a Hamilton cycle.

\subsection{Algorithm}
\label{sec:quotient-algo}

We now give an algorithm to construct Hamilton paths in the cover graphs of quotients of acyclic reorientation lattices of peo-consistent digraphs.

\subsubsection{Restrictions, rails, ladders, and projections}

We introduce some notations that will be useful for inductive reasonings.
For the rest of this paper let $D=([n],A)$ be a peo-consistent digraph.
For an acyclic reorientation~$E\in\AR_{D_{n-1}}$, we denote by~$c(E)$ the acyclic reorientation of~$D\in\AR_D$ obtained from~$E$ by adding the last vertex~$n$ as a source or as a sink, according to how it appears in~$D$.
Similarly, we write~$\bar{c}(E)$ for the acyclic reorientation of~$D$ obtained from~$E$ by adding the last vertex~$n$ as a source if it is a sink in~$D$, or as a sink if it is a source in~$D$, hence oppositely to how it appears in~$D$.
Given a lattice congruence~$\equiv$ on~$\AR_D$, we define the \emph{restriction} $\equiv^*$ as the relation on~$\AR_{D_{n-1}}$ induced by all acyclic reorientations of~$D$ in which no arc incident with~$n$ is reoriented with respect to~$D$, i.e., for any two acyclic reorientations~$E$ and~$F$ in~$\AR_{D_{n-1}}$ we have $E \equiv^* F\Leftrightarrow c(E)\equiv c(F)$.

\begin{lemma}
For every lattice congruence~$\equiv$ of~$\AR_D$, the restriction~$\equiv^*$ is a lattice congruence of~$\AR_{D_{n-1}}$.
\end{lemma}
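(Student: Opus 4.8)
The plan is to reduce everything to the single structural fact that the map $c\colon\AR_{D_{n-1}}\to\AR_D$ is a \emph{lattice embedding}, i.e.\ an injection that preserves meets and joins. Granting this, the lemma follows quickly. Both $\AR_D$ and $\AR_{D_{n-1}}$ are lattices, since $D$ and $D_{n-1}$ are peo-consistent, hence vertebrate by Lemma~\ref{lem:consistent}, so \cite[Thm.~1]{pilaud_2022} applies. The relation $\equiv^*$, defined by $E\equiv^* F\Leftrightarrow c(E)\equiv c(F)$, is an equivalence relation because $\equiv$ is one. If $E\equiv^* E'$ and $F\equiv^* F'$, then $c(E)\equiv c(E')$ and $c(F)\equiv c(F')$, so applying the lattice congruence $\equiv$ on $\AR_D$ gives $c(E)\vee c(F)\equiv c(E')\vee c(F')$ and $c(E)\wedge c(F)\equiv c(E')\wedge c(F')$; since $c$ preserves meets and joins this reads $c(E\vee F)\equiv c(E'\vee F')$ and $c(E\wedge F)\equiv c(E'\wedge F')$, i.e.\ $E\vee F\equiv^* E'\vee F'$ and $E\wedge F\equiv^* E'\wedge F'$, as required. (If $n=1$ everything is trivial, so assume $n\ge 2$; then $n$ is a source or a sink of $D$, and by symmetry we may assume it is a sink, the source case being entirely analogous with the roles of in- and out-arcs at $n$ exchanged.)

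So the real work is showing that $c$ is a lattice embedding. First I would observe that $c$ is an \emph{order} embedding: for $E\in\AR_{D_{n-1}}$, the reorientation $c(E)$ reverses, with respect to $D$, exactly the arcs that $E$ reverses with respect to $D_{n-1}$, and in particular reverses no arc incident to $n$; hence $E\le E'$ in $\AR_{D_{n-1}}$ if and only if $c(E)\le c(E')$ in $\AR_D$. It follows that the image of $c$ equals $\{G\in\AR_D : n\text{ is a sink of }G\}$, which coincides with $\{G\in\AR_D : G\text{ reverses no arc incident to }n\}$, and this set is a down-set of $\AR_D$ (a reorientation below one that reverses no arc at $n$ cannot reverse an arc at $n$ either). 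A down-set of a lattice is automatically closed under the ambient meet, so $\operatorname{im}(c)$ is meet-closed.

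The step I expect to be the main obstacle is showing that $\operatorname{im}(c)$ is also closed under the \emph{join} of $\AR_D$, since a down-set need not be join-closed. To handle this I would introduce the operator $\rho\colon\AR_D\to\AR_D$ that leaves every arc not incident to $n$ as it is in $G$ but orients every arc incident to $n$ towards $n$ (as in $D$). One checks routinely that $\rho(G)$ is again an acyclic reorientation of $D$ (a directed cycle of $\rho(G)$ cannot leave the sink $n$, and a cycle avoiding $n$ would be a cycle of $G$), that $\rho(G)\le G$ because the set of arcs reversed by $\rho(G)$ is that of $G$ with the arcs at $n$ deleted, that $\rho$ is order-preserving, and that $\rho\circ\rho=\rho$ with image exactly $\{G : n\text{ is a sink of }G\}=\operatorname{im}(c)$. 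Thus $\rho$ is an interior (coclosure) operator on $\AR_D$. Now for $G_1,G_2\in\operatorname{im}(c)$, put $J:=G_1\vee_{\AR_D}G_2$; then $\rho(J)\le J$, while monotonicity of $\rho$ together with $G_i=\rho(G_i)\le\rho(J)$ for $i=1,2$ shows that $\rho(J)$ is an upper bound of $\{G_1,G_2\}$, so $\rho(J)\ge J$; hence $J=\rho(J)\in\operatorname{im}(c)$. Therefore $\operatorname{im}(c)$ is a sublattice of $\AR_D$ whose meets and joins agree with those of $\AR_D$, and since $c$ is an order isomorphism onto it, $c$ preserves meets and joins. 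This establishes that $c$ is a lattice embedding and completes the proof.
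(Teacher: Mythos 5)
Your proof is correct and takes essentially the same approach as the paper: reduce the lemma to the observation that $c$ commutes with meets and joins, then conclude directly from the definition of lattice congruence. The paper simply states this observation without proof, whereas you supply a clean verification via the order-embedding, down-set, and interior-operator arguments.
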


\begin{proof}
This follows straightforwardly from the definition of lattice congruence and restriction and the observation that for any two acyclic reorientations~$E$ and~$F$ in~$\AR_{D_{n-1}}$, we have $c(E) \vee c(F) = c(E \vee F)$ and $c(E) \wedge c(F) = c(E \wedge F)$.
\end{proof}

A \emph{rail} in the acyclic reorientation lattice~$\AR_D$ is a maximal subposet of~$\AR_D$ induced by all acyclic reorientations of~$D$ that agree on the orientations of all the arcs in~$D_{n-1}$, i.e., that differ only in the orientation of the arcs incident with~$n$.
For a reorientation~$E\in\AR_{D_{n-1}}$, we denote the corresponding rail in~$\AR_D$ by~$r(E)$; see Figure~\ref{fig:ladder}.
The minimum element of the rail~$r(E)$ is~$c(E)$, the maximum element is~$\bar{c}(E)$, and the number of elements on the rail equals the number of arcs incident with~$n$ in~$D$ (i.e., the degree of~$n$).
The number of rails in~$\AR_D$ is equal to~$|\AR_{D_{n-1}}|$, and these rails form a partition of~$\AR_D$ into chains of the same size.

\begin{lemma}
\label{lem:rail}
For every lattice congruence~$\equiv$ of~$\AR_D$, every equivalence class~$X$ of~$\equiv$, and every rail~$r$ of~$\AR_D$, the intersection~$X\cap r$ is an interval in~$r$.
\end{lemma}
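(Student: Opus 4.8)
The plan is to reduce the statement to two facts that are already in hand: that every congruence class is order-convex in $\AR_D$, and that every rail is a chain. First I would recall from the paragraph preceding the lemma that a rail $r=r(E)$ is a chain in $\AR_D$: its elements are exactly the acyclic reorientations of $D$ that agree with $E$ on all arcs of $D_{n-1}$, they range from the minimum $c(E)$ to the maximum $\bar c(E)$, and consecutive ones differ by flipping a single arc incident with~$n$. This is precisely the assertion ``these rails form a partition of $\AR_D$ into chains of the same size''; alternatively it can be seen directly from the fact that $n$ is a source or sink of $D$ whose neighborhood induces an acyclic clique (a linear order), so the arcs incident with $n$ that are reoriented must form a suffix of that order, and these suffixes are nested by inclusion.

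Next I would invoke Lemma~\ref{lem:interval}: the class $X=[x]_\equiv$ is an interval of the lattice $\AR_D$, hence an \emph{order-convex} subset, meaning that whenever $a,b\in X$ and $a\le c\le b$ in $\AR_D$, we have $c\in X$.

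The core step is then the elementary observation that the intersection of an order-convex subset with a chain is order-convex within that chain. Concretely: if $a,b\in X\cap r$ and $c\in r$ lies between $a$ and $b$ in the (total) order of $r$ — hence also in the order of $\AR_D$, since the order on $r$ is the induced one — then $c\in X$ by convexity of $X$, and $c\in r$ by assumption, so $c\in X\cap r$. Since $\AR_D$ is finite, an order-convex subset of the chain $r$ is either empty or equals the segment of $r$ running from its minimum element to its maximum element, i.e., an interval of $r$, which is exactly what the lemma asserts.

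I do not anticipate a genuine obstacle here: the proof is essentially the composition of Lemma~\ref{lem:interval} with the structural description of rails recalled above, together with the triviality ``convex $\cap$ chain $=$ interval''. The only points requiring a little care are citing the right prior results and, if one wants to be pedantic, handling the degenerate cases where $X\cap r$ is empty or a singleton (both of which are intervals of $r$ under the convention used in the paper).
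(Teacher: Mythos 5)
Your proposal is correct and takes essentially the same route as the paper: both cite Lemma~\ref{lem:interval} for order-convexity of $X$ and the fact that a rail is a chain, and then conclude via the observation that the intersection of a convex set with a chain is an interval of that chain. You merely spell out the ``convex $\cap$ chain $=$ interval'' step in more detail, which the paper leaves implicit.
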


\begin{proof}
From Lemma~\ref{lem:interval}, we know that $X$ is an interval of~$\AR_D$.
Since every rail~$r$ is a chain of~$\AR_D$, its intersection with~$X$ must be an interval in~$r$.
\end{proof}

A \emph{ladder} is the subposet of~$\AR_D$ induced by a pair of rails~$r(E)$ and~$r(F)$ for which~$E,F\in\AR_{D_{n-1}}$ differ in a flip of a single arc, i.e., $E$ and~$F$ are in cover relation in~$\AR_{D_{n-1}}$; see Figure~\ref{fig:ladder}.
We denote this ladder by~$\ell(E,F)$.
The cover graph of a ladder consists of two paths belonging to the rails, and of additional cover edges that we call \emph{stairs}.
We will need the following property of ladders, which explains the chosen name.

\begin{figure}[h!]
\centering
\makebox[0cm]{ 
\includegraphics{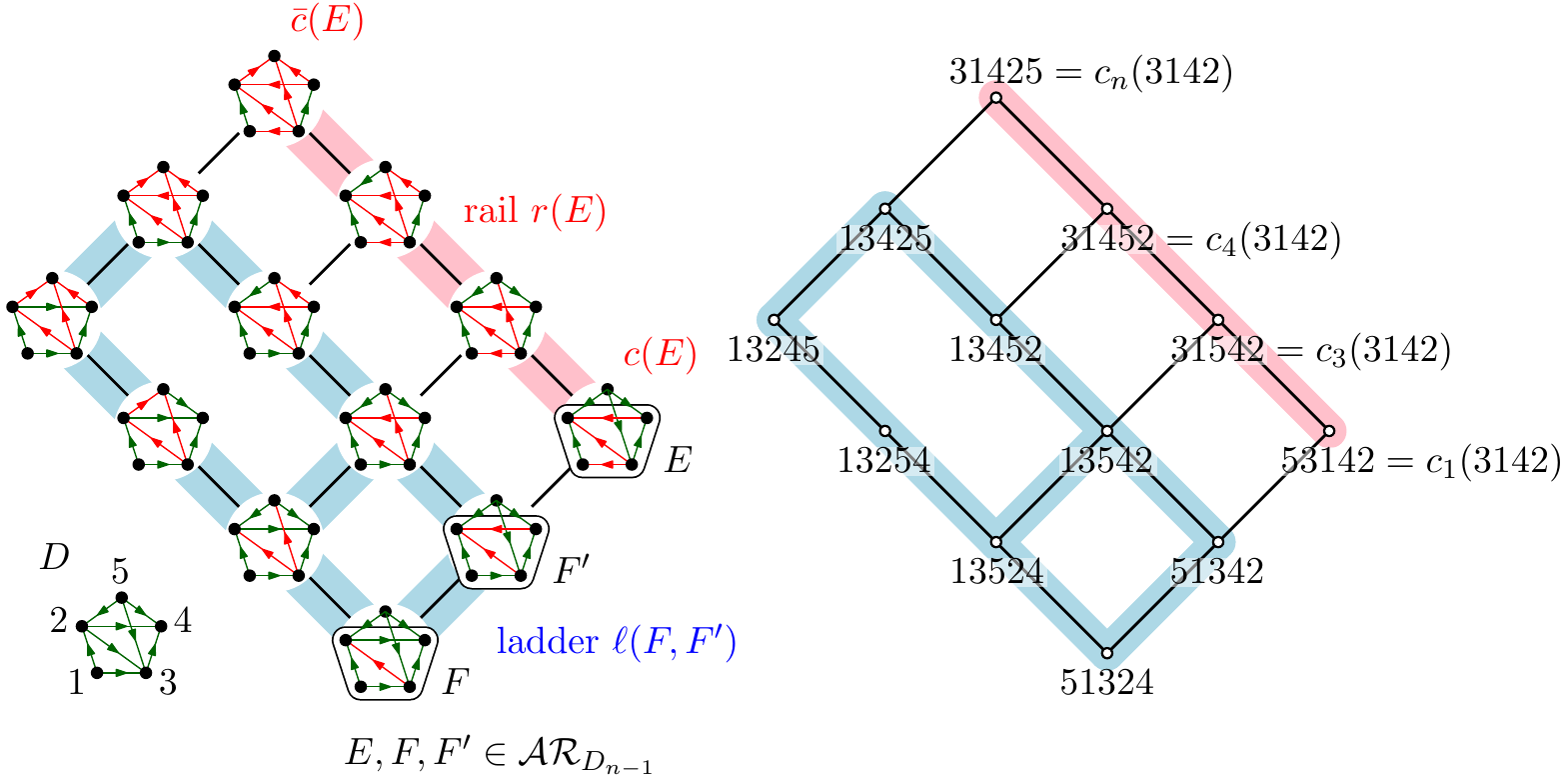}
}
\caption{Illustration of rails and ladders.
A sublattice of~$\AR_D$ is shown on the left (reoriented arcs w.r.t.~$D$ are highlighted), with the encoding of acyclic reorientations by permutations given by Lemma~\ref{lem:peo2perm} shown on the right.
A rail and a ladder and are highlighted.}
\label{fig:ladder}
\end{figure}

\begin{lemma}
\label{lem:ladder}
For any ladder~$\ell(E,F)$, where $E,F\in\AR_{D_{n-1}}$, the first and last pair of reorientations in both rails forms a stair, and the interval between any two successive stairs in the ladder is either a diamond or a hexagon.
\end{lemma}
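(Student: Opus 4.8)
The plan is to build everything on the fact that, since $D$ is peo-consistent, the vertex~$n$ is simplicial, so its neighborhood~$N$ in the underlying undirected graph is a clique; hence in every acyclic reorientation~$E\in\AR_{D_{n-1}}$ the arcs of~$E$ with both endpoints in~$N$ form a total order of~$N$ (a directed path in the transitive reduction), and the rail~$r(E)$ is a chain $S^E_0\lessdot S^E_1\lessdot\cdots\lessdot S^E_k$ from its minimum~$c(E)$ to its maximum~$\bar c(E)$. I would first record the bookkeeping fact that $S^E_i$ is the reorientation obtained by inserting~$n$ ``at level~$i$'' of this path: the set of neighbors of~$n$ that~$S^E_i$ orients towards~$n$ is an initial segment of the $N$-order determined by~$E$ if~$n$ is a sink of~$D$, or a final segment if~$n$ is a source, of size depending only on~$i$. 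In particular the arcs of~$S^E_i$ between~$n$ and~$N$ depend only on~$i$ and on that $N$-order, and $S^E_i$, $S^E_{i+1}$ differ by a single flip of an arc joining~$n$ to a neighbor.

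Since $E$ and~$F$ are in a cover relation of~$\AR_{D_{n-1}}$, they differ by flipping exactly one arc~$uv$, and I would split into two cases. \textbf{Case 1:} not both of~$u,v$ lie in~$N$. Then $E$ and~$F$ induce the same total order on~$N$, so for every level~$i$ the reorientations $S^E_i$ and~$S^F_i$ have identical arcs between~$n$ and~$N$ and differ only in~$uv$; this makes $(S^E_i,S^F_i)$ a stair, with $S^E_i\lessdot S^F_i$ (the direction being forced by $E\lessdot F$), whereas $S^E_i$ and~$S^F_j$ with $i\ne j$ differ in $|i-j|+1\ge 2$ arcs and so are not in a cover relation. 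Hence the stairs are exactly the pairs $(S^E_i,S^F_i)$, the first pair~$(S^E_0,S^F_0)$ and the last pair~$(S^E_k,S^F_k)$ are stairs, and the interval $[S^E_i,S^F_{i+1}]$ between the stairs at consecutive levels~$i$ and~$i+1$ equals $\{S^E_i,S^E_{i+1},S^F_i,S^F_{i+1}\}$, which is readily checked to be a diamond.

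\textbf{Case 2:} both $u,v$ lie in~$N$. Since flipping~$uv$ preserves acyclicity, $u$ and~$v$ are consecutive in the $N$-order of~$E$, say $u=p_m$ and $v=p_{m+1}$ with $p_m\to p_{m+1}$ in~$E$, and $F$ swaps them. Then the set of neighbors that~$S^\bullet_i$ orients towards~$n$ is the same whether we use~$E$ or~$F$, at every level~$i$ except a single ``defect level''~$m^*\in\{1,\dots,k-1\}$, the one at which the boundary of the segment falls between~$p_m$ and~$p_{m+1}$. For $i\ne m^*$ the pair $(S^E_i,S^F_i)$ is a stair exactly as in Case~1, while at level~$m^*$ a short count shows that $S^E_{m^*}$ differs from every element of the rail~$r(F)$ in at least two arcs, so there is no stair at level~$m^*$. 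Consequently the first and last pairs are still stairs, the intervals between consecutive stairs not straddling~$m^*$ are diamonds as before, and it remains to examine the single interval $[S^E_{m^*-1},S^F_{m^*+1}]$ straddling the defect: there $S^F_{m^*+1}$ differs from $S^E_{m^*-1}$ precisely in the arc~$uv$ and in the two arcs joining~$n$ to~$p_m$ and to~$p_{m+1}$, and enumerating which of the eight subsets of these three flips yield acyclic reorientations shows that exactly six reorientations lie in this interval, forming two maximal four-element chains that share only their minimum and maximum---that is, a hexagon. The dual situation in which~$n$ is a source of~$D$ is symmetric, e.g.\ via the order-reversing isomorphism $\AR_{D^{\mathrm{op}}}\cong\AR_D^{\mathrm{op}}$, which sends rails to rails, ladders to ladders, and diamonds and hexagons to themselves.

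The step I expect to be the main obstacle is the Case~2 bookkeeping: showing that~$m^*$ is the \emph{only} level at which a stair is missing, and then the enumeration verifying that the six-element interval straddling the defect is genuinely a hexagon---with exactly the prescribed cover relations and no others---all while keeping the source/sink asymmetry of~$D$ under control. By contrast, the diamond verifications and the claim that the first and last pairs of the rails form stairs are routine once the level parametrisation of the rails is in place, and the degenerate cases where~$n$ has degree~$0$ or~$1$ (so no hexagon can arise) are immediate.
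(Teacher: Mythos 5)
Your proof is correct, and it reaches the same conclusion via a genuinely different organization of the argument. You parametrize both rails by a "level" (the position of~$n$ in the totally ordered clique~$N$), then split on whether the arc~$uv$ distinguishing~$E$ from~$F$ lies entirely inside~$N$ or not; in the first case you locate a unique defect level and show the interval straddling it is a hexagon, and in all other positions (and in the entire second case) you get diamonds. The paper instead argues locally and inductively: it starts from a known stair~$(E',F')$, lets $E'',F''$ be the covering elements on the two rails, and splits on whether $E'',F''$ are obtained by flipping the same arc (diamond) or different arcs (hexagon, by showing those arcs together with~$a$ form a triangle through~$n$ and then propagating one more step on each rail). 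The two dichotomies are equivalent --- the paper even records your version as a remark after the proof ("a ladder consists of a single hexagon and diamonds iff $n$ is incident to both endpoints of the arc~$a$") --- but the paper never needs the explicit level bookkeeping. What your approach buys is a more transparent global picture: one sees at once where the hexagon sits, that there is at most one, and why. What the paper's approach buys is economy: moving from one stair to the next sidesteps the counting arguments you flag as the main obstacle (verifying that $m^*$ is the \emph{only} missing level and that no "diagonal" cover pairs between $S^E_i$ and $S^F_j$, $i\neq j$, exist), because it only ever compares adjacent elements on the rails. The one point worth spelling out more carefully in your write-up is the justification that when $u,v\in N$ they are consecutive in the $N$-order: this holds because $uv$ must lie in the transitive reduction of~$E$ to be flippable, and any non-consecutive pair in the clique~$N$ would have a third clique vertex between them making $uv$ transitive. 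With that in place, the level parametrization, the defect-level analysis, and the eight-subset enumeration for the hexagon all check out.
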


\begin{proof}
As $E$ and~$F$ differ in a flip of some arc, $c(E)$ and~$c(F)$ are both acyclic reorientations of~$D$ that differ in a flip of the same arc.
Similarly, $\bar{c}(E)$ and~$\bar{c}(F)$ are both acyclic reorientations of~$D$ that differ in a flip of the same arc.
Consequently, $(c(E),c(F))$ is the first stair of the ladder~$\ell(E,F)$, and $(\bar{c}(E),\bar{c}(F))$ is the last stair of the ladder.

Now consider any stair~$(E',F')$ of this ladder and denote by~$a$ the arc of~$D$ that has a distinct orientation in~$E'$ and~$F'$.
Furthermore, let $E''\in r(E)$ and~$F''\in r(F)$ be the reorientations that cover~$E'$ and~$F'$ in their respective rails.
We then consider two cases.

The first case is when~$E''$ and~$F''$ are obtained from~$E'$ and~$F'$, respectively, by flipping the same arc~$b$.
In that case, $E''$ and~$F''$ clearly differ in a flip of the single arc~$a$, and form the next stair~$(E'',F'')$ in the ladder.
Hence the two successive stairs form a diamond.

The second case is when~$E''$ is obtained from~$E'$ by flipping an arc~$b$, and $F''$ is obtained from~$F'$ by flipping another arc~$c\neq b$.
Then the pair~$(E'',F'')$ is not a stair.
Since $E''\in r(E)$ and $F''\in r(F)$, it must be the case that $b$ and~$c$ are both incident to the vertex~$n$.
We assume without loss of generality that the vertex~$n$ is a sink in~$D$, which means that the arcs~$b$ and~$c$ are incoming to~$n$ in~$E'$ and~$F'$.
Since $n$ is a simplicial vertex, the other two endpoints of~$b$ and~$c$ must be adjacent.
And it must be the case that arc~$a$ connects these two endpoints of~$b$ and~$c$, as otherwise one of~$E''$ or~$F''$ would not be acyclic.
So the three arcs $a,b,c$ form a triangle.

We claim that arc~$c$ can be flipped in~$E''$ to obtain the next reorientation~$E'''\in r(E_{n-1})$.
Indeed, suppose for contradiction that flipping $c$ creates a cycle.
Then either this cycle does not use~$a$ and must also be present in~$F''$ (as it cannot use~$b$), or it uses arc~$a$, but then there already is a shorter cycle in~$E''$ that uses~$b$ instead of $c,a$.
This proves the claim.
Since we can flip arc~$c$ in~$E''$, this must yield the next reorientation~$E'''$ on the rail~$r(E)$.

Similarly, the arc~$b$ can be flipped in~$F''$ to obtain the next reorientation~$F'''\in r(F)$.
Now~$E'''$ and~$F'''$ only differ by a flip of the arc~$a$, and hence must form the next stair of the ladder.
In this case, the interval induced by the two successive stairs in the ladder is a hexagon.

By applying this reasoning starting from the first stair on the ladder~$\ell(E,F)$ and ending with the last stair, we obtain that the only stairs are the pairs identified above, which completes the proof.
\end{proof}

From the proof above we see that each ladder has at most one hexagon formed by two consecutive stairs.
More specifically, a ladder~$\ell(E,F)$ for $E,F\in\AR_{D_{n-1}}$ consists of a single hexagon and diamonds if and only if the vertex~$n$ is incident to both endpoints of the arc~$a$ in which $E$ and~$F$ differ, and otherwise the ladder has only diamonds.
For example, in Figure~\ref{fig:ladder}, the ladder~$\ell(F,F')$ is of the first type, and the ladder~$\ell(F',E)$ is of the second type.

Given a set $X\seq\AR_D$, we define the \emph{projection} $p(X) := \{E_{n-1} \mid E\in X \}$.
The following lemma is a crucial ingredient for our algorithm, and it is proved by repeated applications of Lemma~\ref{lem:forcing}.

\begin{lemma}
\label{lem:projection}
For every lattice congruence~$\equiv$ of~$\AR_D$ and every equivalence class~$X$ of~$\equiv$, the projection~$p(X)$ is an equivalence class of~$\equiv^*$.
In particular, for any two equivalence classes~$X,Y$, we either have $p(X)=p(Y)$ or $p(X)\cap p(Y)=\emptyset$.
\end{lemma}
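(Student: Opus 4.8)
I would prove Lemma~\ref{lem:projection} by first establishing the projection claim for a \emph{single} equivalence class $X$, and then deriving the ``in particular'' sentence as a formal consequence. Fix a lattice congruence $\equiv$ of $\AR_D$ and an equivalence class $X$. I must show that $p(X)$ is exactly one equivalence class of $\equiv^*$. This breaks into two sub-claims: (a) $p(X)$ is contained in a single $\equiv^*$-class, i.e.\ any two elements of $p(X)$ are $\equiv^*$-equivalent; and (b) $p(X)$ is a \emph{full} $\equiv^*$-class, i.e.\ if $E\in p(X)$ and $E\equiv^* F$ then $F\in p(X)$.

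\textbf{Reducing to a local step along a ladder.} For both sub-claims the right granularity is a \emph{cover step} in $\AR_{D_{n-1}}$. For sub-claim (a): by Lemma~\ref{lem:interval}, $X$ is an interval $[\underline{x},\bar{x}]$ of $\AR_D$; in particular the elements of $X$ form a connected subgraph of the cover graph, so $p(X)$ is connected in the cover graph of $\AR_{D_{n-1}}$. Hence it suffices to show: whenever $E,F\in p(X)$ with $E,F$ in a cover relation in $\AR_{D_{n-1}}$, we have $E\equiv^* F$. Now $E,F\in p(X)$ means $X$ meets both rails $r(E)$ and $r(F)$, and $X\cap r(E)$, $X\cap r(F)$ are intervals in these rails by Lemma~\ref{lem:rail}. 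The ladder $\ell(E,F)$ is, by Lemma~\ref{lem:ladder}, a vertical stack whose consecutive stairs bound diamonds, with at most one hexagon; moreover $X$ restricted to this ladder is still an interval (intersection of the interval $X$ with the sublattice $\ell(E,F)$). The goal is to show $c(E)\equiv c(F)$, i.e.\ that the \emph{bottom} stair $(c(E),c(F))$ of the ladder is an equivalence (monochromatic), given that $X$ contains elements on both rails. Starting from wherever $X$ first hits both rails and walking down the ladder one diamond/hexagon at a time, I invoke the diamond rule and hexagon rule of Lemma~\ref{lem:forcing}: if $X$ contains both elements of some stair, then in the diamond below it (going down) the top stair being monochromatic forces the bottom stair to be monochromatic (diamond rule, $b\equiv d\Rightarrow a\equiv c$); in the hexagon case the analogous forcing from $d\equiv f$ to $a\equiv c$ applies. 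If initially $X$ contains one element from each rail but these are not a stair, I first use that $X$ is an interval to note that the ``missing corner'' lies in $X$ as well (the interval $X$ in the ladder contains the meet of the two chosen elements, which forces a stair into $X$, or one can argue directly that $X\cap\ell(E,F)$, being an interval meeting both chains, contains a full stair). Propagating the forcing down to the bottom stair gives $c(E)\equiv c(F)$, hence $E\equiv^* F$, proving (a).

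\textbf{Fullness and the disjointness corollary.} For sub-claim (b), suppose $E\in p(X)$ and $E\equiv^* F$ with $E,F$ in a cover relation in $\AR_{D_{n-1}}$ (the general case again follows by connectedness of $\equiv^*$-classes, which are intervals). Then $c(E)\equiv c(F)$ by definition of $\equiv^*$. Pick the element $E'\in r(E)\cap X$ (nonempty since $E\in p(X)$); I want to produce an element of $r(F)$ that lies in $X$. Walking \emph{up} the ladder $\ell(E,F)$ from the monochromatic bottom stair $(c(E),c(F))$, the diamond rule in the ``upward'' direction ($a\equiv c\Rightarrow b\equiv d$) and the hexagon rule ($a\equiv c$ together with the already-known $d\equiv f$ gives $b\equiv d$ and $c\equiv e$) show that \emph{every} stair of the ladder is monochromatic — i.e.\ $\equiv$ restricted to the ladder identifies each element of $r(E)$ with the element of $r(F)$ at the same ``height''. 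In particular the partner of $E'$ on $r(F)$ is $\equiv$-equivalent to $E'$, hence lies in $X$, so $F\in p(X)$. This proves (b), and together (a)+(b) give that $p(X)$ is a single $\equiv^*$-class. Finally, the ``in particular'' statement: if $p(X)\cap p(Y)\neq\emptyset$, then $p(X)$ and $p(Y)$ are two $\equiv^*$-classes with a common element, hence equal — this is immediate once the main claim is established.

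\textbf{Main obstacle.} The routine part is the definitional shuffling ($\equiv^*$, rails, projections) and the mechanical propagation of the diamond/hexagon forcing rules up and down a ladder. The genuinely delicate point is the interaction between the hexagon that may occur in a ladder and the interval structure of an equivalence class: I must make sure that when $X$ meets both rails of a ladder, the forcing argument really does reach the bottom (resp.\ top) stair through the hexagon, using that $X\cap\ell(E,F)$ is an interval of the ladder and hence contains a complete stair to seed the propagation; getting the base case of this walk exactly right — in particular ruling out the degenerate possibility that $X$ hits the two rails only at ``staggered'' positions with no shared stair, in a way compatible with $\equiv$ being a congruence — is where the care is needed. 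The characterization from the remark after Lemma~\ref{lem:ladder} (at most one hexagon per ladder, at the position dictated by whether $n$ touches both endpoints of the flipped arc) is what keeps this case analysis finite and manageable.
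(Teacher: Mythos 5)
Your overall plan---propagate the diamond/hexagon forcing rules of Lemma~\ref{lem:forcing} along a ladder, using Lemma~\ref{lem:interval} and Lemma~\ref{lem:rail}---is the same as the paper's, and your fullness argument (sub-claim (b)) essentially matches the paper's second case. But your decomposition of sub-claim (a) introduces an obstacle that the paper's proof deliberately avoids, and I don't think your sketch closes it.

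You reduce to two elements $E,F\in p(X)$ in cover relation in $\AR_{D_{n-1}}$, and then need to \emph{seed} the forcing by finding a full stair of $\ell(E,F)$ whose two endpoints both lie in $X$. Your justification is that the meet of a point of $X\cap r(E)$ with a point of $X\cap r(F)$ lies in $X$ and ``forces a stair into $X$,'' or alternatively that $X\cap\ell(E,F)$ is an interval of the ladder meeting both chains and hence contains a stair. Neither is immediate: the meet in $\AR_D$ of an element of $r(E)$ and an element of $r(F)$ is not a priori an element of $\ell(E,F)$ (the ladder is not an interval or a sublattice of $\AR_D$), and $X\cap\ell(E,F)$ is only order-convex in $\ell(E,F)$, which does not visibly force a shared stair index when $X\cap r(E)$ and $X\cap r(F)$ sit at ``staggered'' heights around the hexagon. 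You flag this as the delicate point, but the sketch you give does not actually resolve it.

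The paper's proof sidesteps the issue entirely by never passing to adjacency in $p(X)$. It argues by contradiction: if $p(X)$ is not a single $\equiv^*$-class, split $X$ into the part projecting into $Y$ and the part not, and use that $X$ itself is an interval of $\AR_D$ (hence connected in the cover graph of $\AR_D$) to find two elements $E,F\in X$ that are in cover relation \emph{in $\AR_D$} and whose projections land on opposite sides of the split. Since $p(E)\neq p(F)$, the flipped arc is not incident to $n$, so $(E,F)$ is automatically a stair of $\ell(p(E),p(F))$; and since $E,F\in X$ it is monochromatic, giving the seed for free. Propagating down to $(c(p(E)),c(p(F)))$ yields $p(E)\equiv^* p(F)$, a contradiction. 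If you rework sub-claim~(a) to extract the seed stair from adjacency in $X$ rather than in $p(X)$, your argument aligns with the paper's and the gap disappears.
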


\begin{proof}
For the sake of contradiction suppose that there is an equivalence class~$X$ of~$\equiv$ for which~$p(X)$ is not an equivalence class of~$\equiv^*$.
Pick some acyclic reorientation from~$p(X)\seq\AR_{D_{n-1}}$, and let $Y$ be its equivalence class of~$\equiv^*$.
As $Y\neq p(X)$ we must have $p(X)\setminus Y\neq \emptyset$ or $Y\setminus p(X)\neq \emptyset$.

We first consider the case~$p(X)\setminus Y\neq \emptyset$.
In this case there must be $E,F\in X$ that are in cover relation in~$\AR_D$ with $E\equiv F$ such that $p(E)=E_{n-1}\in p(X)\setminus Y$ and $p(F)=F_{n-1}\in Y$, in particular $E_{n-1}\notin Y$.
This means that in the ladder~$\ell(E_{n-1},F_{n-1})$, the endpoints~$E$ and~$F$ of the stair~$(E,F)$ are congruent.
By using Lemma~\ref{lem:ladder} and repeatedly applying Lemma~\ref{lem:forcing}, it follows that the endpoints of every other stair of the ladder must be congruent as well, in particular the endpoints~$c(E_{n-1})$ and $c(F_{n-1})$ of the first stair.
Consequently, we have $c(E_{n-1})\equiv c(F_{n-1})$ and therefore $E_{n-1}\equiv^* F_{n-1}$, a contradiction to the fact that $E_{n-1}\notin Y$ and $F_{n-1}\in Y$.

We now consider the case~$Y\setminus p(X)\neq \emptyset$.
In this case there must be $E,F\in Y$ that are in cover relation in~$\AR_{D_{n-1}}$ with $E\equiv^* F$ such that $E\in Y\setminus p(X)$ and $F\in p(X)$, in particular $E\notin p(X)$.
This means that in the ladder~$\ell(E,F)$, the endpoints~$E$ and~$F$ of the first stair~$(c(E),c(F))$ are congruent.
By using Lemma~\ref{lem:ladder} and repeatedly applying Lemma~\ref{lem:forcing}, it follows that the endpoints of every other stair of the ladder must be congruent as well.
Consequently, since there is a stair~$(E',F')$ with $F'\in X$ (as $p(F')=F\in p(X)$), we must have $E'\in X$ as well and therefore $p(E')=E\in p(X)$, a contradiction to the fact that~$E\notin p(X)$.
\end{proof}

\begin{lemma}
\label{lem:minmax-rail}
For every lattice congruence $\equiv$ of $\AR_D$, either for every~$E\in\AR_{D_{n-1}}$ there are two distinct equivalence classes of~$\equiv$ containing~$c(E)$ and~$\bar{c}(E)$, or for every rail~$r(E)$, $E\in\AR_{D_{n-1}}$, all reorientations on that rail belong to the same equivalence class.
\end{lemma}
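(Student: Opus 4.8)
The plan is to compress the dichotomy into a single predicate on $\AR_{D_{n-1}}$, namely ``the rail $r(E)$ lies in one equivalence class'', and then to show that this predicate is simultaneously closed and co-closed under passing to a cover-relation neighbour, so that connectedness of $\AR_{D_{n-1}}$ forces it to hold for all $E$ or for no $E$.

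First I would record the elementary reformulation. By Lemma~\ref{lem:rail}, for any equivalence class $X$ the intersection $X\cap r(E)$ is an interval of the chain $r(E)$, and the minimum and maximum of $r(E)$ are $c(E)$ and $\bar{c}(E)$; hence $r(E)$ is contained in a single class if and only if $c(E)\equiv\bar{c}(E)$. So the two alternatives in the statement are precisely ``$c(E)\not\equiv\bar{c}(E)$ for every $E\in\AR_{D_{n-1}}$'' versus ``$c(E)\equiv\bar{c}(E)$ for every $E\in\AR_{D_{n-1}}$''. Setting $Q:=\{E\in\AR_{D_{n-1}}:c(E)\equiv\bar{c}(E)\}$, it suffices to prove $Q=\emptyset$ or $Q=\AR_{D_{n-1}}$.

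The heart of the argument is the implication: if $E\in Q$ and $\{E,F\}$ is an edge of the cover graph of $\AR_{D_{n-1}}$, then $F\in Q$. Here I would work inside the ladder $\ell(E,F)$ and use its structure from Lemma~\ref{lem:ladder}: its two rails are $r(E)$ and $r(F)$, the bottom pair $(c(E),c(F))$ and the top pair $(\bar{c}(E),\bar{c}(F))$ are stairs, and the interval of $\AR_D$ between two successive stairs is a diamond or a hexagon. Since $E\in Q$, the entire rail $r(E)$ lies in one class. Now I would walk up the ladder stair by stair. Given successive stairs $(P,P')$ and $(R,R')$ with $P,R\in r(E)$ and $P',R'\in r(F)$: if the enclosed interval is a diamond then $P\equiv R$ (both on $r(E)$) and, applying the congruence axiom to the join with $P'$, $P'=P\vee P'\equiv R\vee P'=R'$; if it is a hexagon with extra rail element $P''$ strictly between $P$ and $R$ on $r(E)$, then $P\equiv P''$ and likewise $P'=P\vee P'\equiv P''\vee P'=R'$ (the relevant joins are computed correctly because such a small interval of $\AR_D$ is a sublattice; equivalently one may invoke the diamond and hexagon rules of Lemma~\ref{lem:forcing}, exactly as in the proof of Lemma~\ref{lem:projection}). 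Chaining this from the bottom stair to the top stair yields $c(F)\equiv\bar{c}(F)$, i.e.\ $F\in Q$. Since the ladder $\ell(E,F)$ is symmetric in $E$ and $F$, the same reasoning gives $F\in Q\Rightarrow E\in Q$.

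Finally, because $D$ is peo-consistent, so is $D_{n-1}$, hence it is vertebrate by Lemma~\ref{lem:consistent} and $\AR_{D_{n-1}}$ is a finite lattice, so its cover graph is connected. Thus $Q$ is a union of connected components of a connected graph, whence $Q=\emptyset$ or $Q=\AR_{D_{n-1}}$, which is exactly the claimed dichotomy. The step I expect to be the main obstacle is the stair-by-stair propagation in the previous paragraph: one has to check that the diamonds and hexagons supplied by Lemma~\ref{lem:ladder} link the $E$-rail and the $F$-rail as claimed and that the joins used are the joins in $\AR_D$, so that monochromaticity genuinely transfers from $r(E)$ to $r(F)$; the connectivity wrap-up and the opening reformulation are routine.
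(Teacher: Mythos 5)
Your proposal is correct and follows essentially the same route as the paper: reduce to the dichotomy via Lemma~\ref{lem:rail}/Lemma~\ref{lem:interval}, propagate the ``rail is monochromatic'' property across a ladder using Lemma~\ref{lem:ladder} together with the diamond/hexagon congruence manipulations (this is exactly what the paper compresses into ``by the forcing rules described in Lemma~\ref{lem:forcing}''), and then conclude by connectivity of the cover graph of $\AR_{D_{n-1}}$ (the paper's ``repeatedly applying this observation''). You spell out the stair-by-stair join computation more explicitly than the paper does, but the argument is the same.
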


\begin{proof}
By the forcing rules described in Lemma~\ref{lem:forcing}, if the reorientations of some rail $r(E)$, $E\in\AR_{D_{n-1}}$, are pairwise congruent, then so are the reorientations of every rail~$r(F)$, $F\in\AR_{D_{n-1}}$, that forms a ladder~$\ell(E,F)$ with~$r(E)$.
Repeatedly applying this observation shows that in this case all reorientations on every rail are congruent.
Otherwise, every rail intersects with at least two equivalence classes of~$\equiv$.
From Lemma~\ref{lem:interval}, we know that every equivalence class of~$\equiv$ intersects each rail~$r(E)$, $E\in\AR_{D_{n-1}}$, in an interval, hence the minimum~$c(E)$ and maximum~$\bar{c}(E)$ must belong to distinct equivalence classes, as claimed.
\end{proof}

\subsubsection{Selection of representatives and encoding as a zigzag language}

As mentioned before, $D=([n],A)$ is assumed to be a peo-consistent acyclic digraph throughout.
For any congruence~$\equiv$ of the acyclic reorientation lattice~$\AR_D$, we say that $R\seq \AR_D$ is a set of \emph{representatives} for the equivalence classes~$\AR_D/{\equiv}$ if and only if for every equivalence class~$X\in\AR_D/{\equiv}$, we have $|X\cap R|=1$.

We now define a set of representatives~$R_D$.
If $D$ is empty, then we define $R_D:=\emptyset$.
Otherwise, we consider the set $R_{D_{n-1}}\seq \AR_{D_{n-1}}$ of representatives for the restriction~$\equiv^*$ of~$\equiv$ on~$D_{n-1}$.
For every acyclic reorientation $E\in R_{D_{n-1}}$, we consider the rail~$r(E)$ in~$\AR_D$.
By Lemma~\ref{lem:minmax-rail} there are two possible cases.
If for every~$E\in\AR_{D_{n-1}}$ there are two distinct equivalence classes of~$\equiv$ containing~$c(E)$ and~$\bar{c}(E)$, then we define a set~$R_{r(E)}$ for all $E\in R_{D_{n-1}}$ as follows.
For each equivalence class~$X\in \AR_D/{\equiv}$ such that $X\cap r(E)\neq\emptyset$, we pick exactly one element from~$X\cap r(E)$ to be included in the set~$R_{r(E)}$.
In particular, we always pick~$c(E)$ and~$\bar{c}(E)$.
We then define
\begin{subequations}
\label{eq:RD}
\begin{equation}
\label{eq:RD1}
R_D := \bigcup_{E\in R_{D_{n-1}}} R_{r(E)}.
\end{equation}
Otherwise, by Lemma~\ref{lem:minmax-rail} for every rail~$r(E)$, $E\in \AR_{D_{n-1}}$, all reorientations on that rail belong to the same equivalence class.
For each $E\in R_{D_{n-1}}$ we select for the set~$R_D$ the reorientation that consists of adding the vertex~$n$ to~$E$ as a sink, i.e., we define
\begin{equation}
\label{eq:RD2}
R_D := \begin{cases}
\{ c(E) \mid E\in R_{D_{n-1}} \} & \text{if $n$ is a sink in~$D$,} \\
\{ \bar{c}(E) \mid E\in R_{D_{n-1}} \} & \text{if $n$ is a source in~$D$.}
\end{cases}
\end{equation}
\end{subequations}

In order to apply Algorithm~J, we interpret the representatives for~$\AR_D/{\equiv}$ as a zigzag language of permutations.
Recall that in Lemma~\ref{lem:peo2perm} we defined a map from acyclic orientations of a graph in perfect elimination order to permutations.
We reuse this definition here, and with any reorientation~$E$ of~$D$ (as $D$ is peo-consistent, $1,\ldots,n$ is a perfect elimination ordering of the underlying undirected graph), we associate the permutation~$\pi_E$, and we define
\begin{equation}
\label{eq:PiD}
\Pi_D := \{ \pi_E \mid E\in R_D \}.
\end{equation}

\begin{lemma}
\label{lem:cong}
For every lattice congruence~$\equiv$ of~$\AR_D$, the set~$R_D$ defined in~\eqref{eq:RD} is indeed a set of representatives for~$\AR_D/{\equiv}$.
Furthermore, the set~$\Pi_D$ defined in~\eqref{eq:PiD} is a zigzag language of permutations satisfying condition~(z1) if~\eqref{eq:RD1} holds and condition~(z2) if~\eqref{eq:RD2} holds.
\end{lemma}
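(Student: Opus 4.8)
The plan is to prove both assertions of Lemma~\ref{lem:cong} simultaneously by induction on~$n$, mirroring the recursive definition of~$R_D$ in~\eqref{eq:RD}. The base case $n=0$ is trivial since $R_D=\emptyset$ and $\Pi_D=\{\varepsilon\}$. For the inductive step, I would first verify that $R_D$ is a set of representatives, and then check that $\Pi_D$ satisfies the relevant zigzag condition, using the fact (from Lemma~\ref{lem:peo2perm} and the remarks following it) that $D\mapsto\pi_D$ is injective on acyclic reorientations and intertwines the operations $c(\cdot)$, $\bar c(\cdot)$ with the operators $c_i$ on permutations: if $n$ is a sink in~$D$ then $\pi_{c(E)}=c_n(\pi_E)$ and $\pi_{\bar c(E)}=c_1(\pi_E)$, while the reorientations strictly between $c(E)$ and $\bar c(E)$ on the rail $r(E)$ map to the permutations $c_i(\pi_E)$ for the intermediate positions $i$, in the order given by the transitive reduction of the neighborhood of~$n$ (and symmetrically with the roles of $c_n,c_1$ swapped if $n$ is a source).

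First I would handle the case where~\eqref{eq:RD1} applies. By the inductive hypothesis, $R_{D_{n-1}}$ is a set of representatives for $\equiv^*$, so $\Pi_{D_{n-1}}=\{\pi_E\mid E\in R_{D_{n-1}}\}$ is a zigzag language. By Lemma~\ref{lem:projection}, the map $X\mapsto p(X)$ sends equivalence classes of~$\equiv$ onto equivalence classes of~$\equiv^*$, and the classes mapping to a fixed class $Y=p(X)$ of~$\equiv^*$ are exactly those meeting the (unique, by the inductive choice of $R_{D_{n-1}}$) rail~$r(E)$ with $E\in R_{D_{n-1}}$ and $E_{n-1}\in Y$; by Lemma~\ref{lem:rail} each such class meets $r(E)$ in an interval, so picking one element from each nonempty intersection $X\cap r(E)$ picks exactly one element from each class in $p^{-1}(Y)$. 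Taking the union over $E\in R_{D_{n-1}}$ therefore yields exactly one element per class of~$\equiv$, so $R_D$ is a set of representatives. For the zigzag property: since~\eqref{eq:RD1} is the case of Lemma~\ref{lem:minmax-rail} where $c(E)$ and $\bar c(E)$ lie in distinct classes for every $E\in\AR_{D_{n-1}}$, both $c(E)$ and $\bar c(E)$ are always selected into $R_{r(E)}$, hence both $c_1(\pi_E)$ and $c_n(\pi_E)$ lie in $\Pi_D$ for every $\pi_E\in\Pi_{D_{n-1}}$; together with the fact that $\{p(\pi)\mid\pi\in\Pi_D\}=\Pi_{D_{n-1}}$ (which holds because projecting $R_D$ gives $R_{D_{n-1}}$), this is exactly condition~(z1).

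For the case~\eqref{eq:RD2}, Lemma~\ref{lem:minmax-rail} tells us every rail is contained in a single equivalence class of~$\equiv$, so the classes of~$\equiv$ are in bijection with the rails, i.e.\ with the classes of~$\equiv^*$ via $p$; hence choosing, for each $E\in R_{D_{n-1}}$, the unique reorientation on $r(E)$ obtained by adding $n$ as a sink (which is $c(E)$ if $n$ is a sink in $D$, and $\bar c(E)$ if $n$ is a source) picks exactly one representative per class, so $R_D$ is a set of representatives. Under the encoding, adding $n$ as a sink corresponds to appending $n$ at the end, i.e.\ $\Pi_D=\{c_n(\pi_E)\mid\pi_E\in\Pi_{D_{n-1}}\}$, which is precisely condition~(z2). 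In both cases the inductive hypothesis that $\Pi_{D_{n-1}}$ is a zigzag language, combined with the verified condition (z1) or (z2), shows $\Pi_D$ is a zigzag language.

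I expect the main obstacle to be the representative-counting argument in the~\eqref{eq:RD1} case, specifically confirming that distinct classes of~$\equiv$ that project to the same class of~$\equiv^*$ are exactly recovered as distinct nonempty interval-intersections with a single rail — this is where Lemma~\ref{lem:projection} and Lemma~\ref{lem:rail} must be combined carefully, and where one must be careful that $R_{D_{n-1}}$ being a \emph{correct} set of representatives for $\equiv^*$ (not just any transversal) is what guarantees we see each fiber $p^{-1}(Y)$ exactly once. The intertwining of $c(\cdot),\bar c(\cdot)$ with $c_1,c_n$ is essentially bookkeeping already recorded in Lemma~\ref{lem:peo2perm} and the surrounding discussion, so the geometric/lattice-theoretic content is entirely in Lemmas~\ref{lem:projection}, \ref{lem:rail}, and~\ref{lem:minmax-rail}, which do the heavy lifting.
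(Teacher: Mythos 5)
Your proposal is correct and follows essentially the same route as the paper's proof: induction on $n$, treating cases~\eqref{eq:RD1} and~\eqref{eq:RD2} separately, deriving the representative property from Lemma~\ref{lem:projection} (with Lemma~\ref{lem:rail} in the background for the interval structure), and obtaining condition~(z1) or~(z2) via the translation $\pi_{c(E)}=c_n(\pi_E)$, $\pi_{\bar c(E)}=c_1(\pi_E)$ (for $n$ a sink, symmetrically otherwise). The only minor point you spell out in more detail than the paper — that the classes of~$\equiv$ projecting to a fixed class~$Y$ of~$\equiv^*$ are exactly those meeting the unique rail $r(E)$ with $E\in R_{D_{n-1}}\cap Y$ — is implicitly present in the paper's statement that each class~$X$ meets exactly one rail $r(E)$ with $E\in R_{D_{n-1}}$, so there is no substantive difference.
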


\begin{proof}
We argue by induction on~$n$.
The statement trivially holds for~$n=0$.
For the induction step, suppose that $R_{D_{n-1}}$ is a set of representatives for~$\AR_{D_{n-1}}/{\equiv^*}$ and that $\Pi_{D_{n-1}}$ is a zigzag language of permutations.
Observe that $\Pi_{D_{n-1}} = \{ p(\pi_E) \mid \pi_E \in \Pi_D \}$.
Hence, in order to show that $\Pi_D$ is a zigzag language, we only need to show that either condition~(z1) or~(z2) as stated in Section~\ref{sec:greedy} is met.
Consider the two cases~\eqref{eq:RD1} and~\eqref{eq:RD2} in the inductive definition of~$R_D$.

In the first case, by Lemma~\ref{lem:projection} and the induction hypothesis, for each equivalence class~$X$ of~$\AR_D$, the projection~$p(X)$ has exactly one element in common with~$R_{D_{n-1}}$.
Hence, $X$ has a nonempty intersection with exactly one rail~$r(E)$ for some $E\in R_{D_{n-1}}$.
By construction, we then have $|R_{r(E)}\cap X|=1$ and consequently $|R_D\cap X|=1$ by the definition~\eqref{eq:RD1}.
It follows that $R_D$ is a set of representatives for~$\AR_D/{\equiv}$.
Moreover, by construction, for every $E \in R_{D_{n-1}}$, the reorientations~$c(E)$ and~$\bar{c}(E)$ are in~$R_{r(E)}$ and consequently also in~$R_D$.
This implies that $c_1(\pi_E)$ and~$c_n(\pi_E)$ are in~$\Pi_D$, i.e., $\Pi_D$ satisfies condition~(z1).

In the second case, by Lemma~\ref{lem:projection} every equivalence class~$X$ of~$\AR_D$ satisfies $X=\bigcup_{E \in X^*} r(E)$.
By the induction hypothesis, every equivalence class of~$\AR_{D_{n-1}}$ has exactly one element in common with~$R_{D_{n-1}}$.
Therefore, for $R_D$ as defined by~\eqref{eq:RD2}, each equivalence class of~$\AR_D$ has exactly one element in common with~$R_D$.
It follows that $R_D$ is a set of representatives for~$\AR_D/{\equiv}$.
Furthermore, in this case we have $\Pi_D = \{c_n(\pi_E) \mid E \in R_{D_{n-1}} \} = \{c_n(\pi_E) | \pi_E \in \Pi_{D_{n-1}} \}$, i.e., $\Pi_D$ satisfies condition~(z2).
\end{proof}

\subsubsection{Algorithm~J for quotients of acyclic reorientation lattices}

\begin{lemma}
\label{lem:cong-J}
When running Algorithm~J with input~$\Pi_D$ as defined in~\eqref{eq:PiD}, then for any two permutations~$\pi_E, \pi_F$ that are visited consecutively, the corresponding equivalence classes~$[E]_{\equiv}$ and~$[F]_{\equiv}$ form a cover relation in the quotient~$\AR_D/{\equiv}$.
\end{lemma}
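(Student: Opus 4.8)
The plan is to prove the lemma by induction on~$n$, running in parallel the inductive definition~\eqref{eq:RD} of the representative set~$R_D$ and the inductive description~\eqref{eq:JLn12} of the listing~$J(\Pi_D)$ produced by Algorithm~J, and reducing each step to the following elementary fact about lattice congruences: if $x\lessdot y$ is a cover relation of a finite lattice~$L$ and $[x]_{\equiv}\neq[y]_{\equiv}$, then $[x]_{\equiv}\lessdot[y]_{\equiv}$ in~$L/{\equiv}$. This is seen by a one-line calculation: if $[x]_{\equiv}<[z]_{\equiv}<[y]_{\equiv}$ then $x\leq y\wedge(x\vee z)\leq y$, so $x\lessdot y$ forces $y\wedge(x\vee z)\in\{x,y\}$; the value~$y$ gives $[y]_{\equiv}\leq[x\vee z]_{\equiv}=[z]_{\equiv}$, and the value~$x$ gives $[x]_{\equiv}=[y]_{\equiv}\wedge[z]_{\equiv}=[z]_{\equiv}$, both contradictions. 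Since Algorithm~J never revisits a permutation, the two reorientations~$E,F$ are distinct elements of the transversal~$R_D$, so $[E]_{\equiv}\neq[F]_{\equiv}$; hence it suffices, for each consecutive pair $\pi_E,\pi_F$ in~$J(\Pi_D)$, to exhibit a cover relation of~$\AR_D$ with one endpoint in $[E]_{\equiv}$ and the other in $[F]_{\equiv}$.

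Two auxiliary observations will be used throughout. First, the maps $c,\bar c\colon\AR_{D_{n-1}}\to\AR_D$ that add the vertex~$n$ as in~$D$, respectively oppositely, are order embeddings whose images — the reorientations in which all arcs at~$n$ agree with~$D$, respectively all disagree — are convex in~$\AR_D$; consequently $c$ and~$\bar c$ each preserve and reflect cover relations. Second, whenever $g,g'\in\AR_{D_{n-1}}$ are in a cover relation with $g\equiv^* g'$, we have not only $c(g)\equiv c(g')$ (which is the definition of~$\equiv^*$) but also $\bar c(g)\equiv\bar c(g')$: the pair $(c(g),c(g'))$ is the first stair of the ladder~$\ell(g,g')$, so by Lemma~\ref{lem:ladder} together with repeated applications of the forcing rules of Lemma~\ref{lem:forcing} — exactly the argument in the proof of Lemma~\ref{lem:projection} — the last stair $(\bar c(g),\bar c(g'))$ also has congruent endpoints. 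Chaining this along a path of cover relations inside an equivalence class of~$\equiv^*$, which is connected because it is an interval by Lemma~\ref{lem:interval}, shows that $e\equiv^* E'$ implies $\bar c(e)\equiv\bar c(E')$.

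With this in hand I would split according to which case of~\eqref{eq:RD} produced~$R_D$, matching~\eqref{eq:JLn1} or~\eqref{eq:JLn2}. If condition~(z2) applies, $J(\Pi_D)$ is $J(\Pi_{D_{n-1}})$ with~$n$ appended, so a consecutive pair corresponds to reorientations $c(E_k),c(E_{k+1})$ or $\bar c(E_k),\bar c(E_{k+1})$ (according as $n$ is a sink or a source in~$D$), where $E_k,E_{k+1}$ are consecutive in~$J(\Pi_{D_{n-1}})$; by the induction hypothesis $[E_k]_{\equiv^*}$ and~$[E_{k+1}]_{\equiv^*}$ form a cover relation in $\AR_{D_{n-1}}/{\equiv^*}$, hence there is a cover relation $e\lessdot f$ of~$\AR_{D_{n-1}}$ with $e\in[E_k]_{\equiv^*}$ and $f\in[E_{k+1}]_{\equiv^*}$, and applying the appropriate convex embedding ($c$ or~$\bar c$) together with the two observations above produces the required cover relation of~$\AR_D$. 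If condition~(z1) applies, a consecutive pair either (a)~lies within one block $\lvec{c}(\pi_k)$ or~$\rvec{c}(\pi_k)$, or (b)~spans two consecutive blocks. In case~(a) both reorientations lie on the rail~$r(E_k)$ and are the representatives of two classes occupying adjacent positions among the classes met by that rail; since each class meets~$r(E_k)$ in an interval by Lemma~\ref{lem:rail} and the rail is convex in~$\AR_D$, the top of one interval is covered by the bottom of the other, which is the cover relation we need. In case~(b) the two reorientations are either the two rail-minima $c(E_k),c(E_{k+1})$ or the two rail-maxima $\bar c(E_k),\bar c(E_{k+1})$ with $E_k,E_{k+1}$ consecutive in~$J(\Pi_{D_{n-1}})$, and one argues exactly as in case~(z2). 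Applying the elementary fact of the first paragraph in each of these cases closes the induction.

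The main obstacle I anticipate is case~(b) of~(z1) and its~(z2) analogue: one must track carefully how a transition between two consecutive blocks of~$J(\Pi_D)$ — which moves the value~$n$ between its two extreme insertion positions — translates into a pair of reorientations at the \emph{maxima} (not the minima) of two rails, and then transport the congruence of the rail-minima up through the ladders to the rail-maxima; this is where Lemmas~\ref{lem:ladder}, \ref{lem:forcing}, and the technique of Lemma~\ref{lem:projection} do the real work. By comparison, the convexity of the images of~$c$ and~$\bar c$ and the elementary lattice-theoretic fact are routine, but each must be spelled out for the argument to go through.
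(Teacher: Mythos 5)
Your proof is correct and follows essentially the same inductive structure as the paper's: induction on~$n$, case split on (z1)/(z2) mirroring~\eqref{eq:RD1}/\eqref{eq:RD2}, within-rail transitions handled via Lemma~\ref{lem:rail}, and cross-rail transitions handled by lifting the induction hypothesis through~$c$ or~$\bar c$. The one place you go beyond the paper is worth noting: the paper's cross-block case dispatches with ``we obtain with the help of Lemma~\ref{lem:projection}\ldots,'' but the projection lemma by itself only shows that $p([\bar c(E)]_\equiv)=[E]_{\equiv^*}$, which does not immediately locate $\bar c(e)$ inside $[\bar c(E)]_\equiv$ for $e\equiv^* E$. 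Your auxiliary observation — that a cover relation $g\lessdot g'$ with $g\equiv^* g'$ forces $\bar c(g)\equiv\bar c(g')$ via the ladder of Lemma~\ref{lem:ladder} and the forcing rules of Lemma~\ref{lem:forcing}, chained along the connected cover graph of the interval $[E]_{\equiv^*}$ — supplies exactly the missing step that the paper leaves implicit (and that is buried inside the proof of Lemma~\ref{lem:projection}). Likewise, the elementary fact that a cover relation in $L$ with inequivalent endpoints projects to a cover relation in $L/{\equiv}$ is used silently in the paper and made explicit by you. So this is the same route, spelled out more completely rather than a genuinely different one.
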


\begin{proof}
We proof the lemma by induction.
The statement is vacuously true for~$n=0$.
For the induction step we assume that it holds for the input~$\Pi_{D_{n-1}}$.

By Lemma~\ref{lem:cong}, $\Pi_D$ is a zigzag language of permutations.
Furthermore, if~\eqref{eq:RD1} holds, then it satisfies condition~(z1), so the permutations in~$\Pi_D$ are generated in the sequence~$J(\Pi_D)$ defined in~\eqref{eq:JLn1}.
Observe that all permutations in~$\lvec{c}(\pi_E)$ or~$\rvec{c}(\pi_E)$, where $E\in R_{D_{n-1}}\seq \AR_{D_{n-1}}$, correspond to acyclic reorientations that lie on the rail~$r(E)=:(E_1,\ldots,E_d)=(c(E),\ldots,\bar{c}(E))$.
If $\pi_{E_i},\pi_{E_j}$, where $1\leq i<j\leq d$, are visited consecutively, then by the definition of the representatives~$R_{r(E)}$ there is an integer~$s$ with $i\leq s<j$ such that
\begin{equation*}
E_i\equiv E_{i+1}\equiv \cdots\equiv E_s\not\equiv E_{s+1}\equiv E_{s+2}\equiv \cdots\equiv E_j,
\end{equation*}
so~$[E_i]_\equiv$ and~$[E_j]_\equiv$ form a cover relation in $\AR_D/{\equiv}$.
Moreover, if $\pi_E$ and~$\pi_F$, where $E,F\in R_{D_{n-1}}\seq \AR_{D_{n-1}}$, are visited consecutively in~$J(\Pi_{D_{n-1}})$, then transitioning from the last permutation of~$\lvec{c}(\pi_E)$ to the first permutation of~$\rvec{c}(\pi_F)$, or from the last permutation of~$\rvec{c}(\pi_E)$ to the first permutation of~$\lvec{c}(\pi_F)$, corresponds to moving from~$c(E)$ to~$c(F)$, or from~$\bar{c}(E)$ to~$\bar{c}(F)$.
Consequently, as $[E]_{\equiv^*}$ and~$[F]_{\equiv^*}$ form a cover relation in the quotient~$\AR_{D_{n-1}}/{\equiv^*}$ by induction, we obtain with the help of Lemma~\ref{lem:projection} that $[c(E)]_{\equiv}$ and~$[c(F)]_{\equiv}$, and also $[\bar{c}(E)]_{\equiv}$ and~$[\bar{c}(F)]_{\equiv}$ form a cover relation in the quotient~$\AR_D/{\equiv}$.

On the other hand, if~\eqref{eq:RD2} holds, then the zigzag language~$\Pi_D$ satisfies condition~(z2), so the permutations in~$\Pi_D$ are generated in the sequence~$J(\Pi_D)$ defined in~\eqref{eq:JLn2}.
In this case, the claim follows immediately by induction.
\end{proof}

Combining Theorem~\ref{thm:jump}, Lemma~\ref{lem:cong}, and Lemma~\ref{lem:cong-J} yields our second main theorem.
Note that by Lemma~\ref{lem:skeletal}, Theorem~\ref{thm:mainquotient} below applies in particular to skeletal digraphs~$D$, thus addressing Problem~\ref{prob:pilaud}.

\begin{theorem}
\label{thm:mainquotient}
For every peo-consistent digraph~$D$ and every lattice congruence $\equiv$ of~$\AR_D$, Algorithm~J with input~$\Pi_D$ as defined in~\eqref{eq:PiD} generates a sequence $\pi_{E_1}, \pi_{E_2}, \ldots$ of permutations from~$R_D$, where $E_1,E_2,\ldots \in \AR_D$ such that $[E_1]_{\equiv}, [E_2]_{\equiv},\ldots $ is a Hamilton path in the cover graph of the lattice quotient~$\AR_D/{\equiv}$.
\end{theorem}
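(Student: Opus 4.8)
The plan is to realize the Hamilton path as the image, under the encoding $E\mapsto\pi_E$, of the listing that Algorithm~J produces on the zigzag language $\Pi_D$, relying entirely on the three ingredients already assembled above: Theorem~\ref{thm:jump}, Lemma~\ref{lem:cong}, and Lemma~\ref{lem:cong-J}. Consequently the proof of Theorem~\ref{thm:mainquotient} itself will be short; the real content sits in those lemmas.

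First I would invoke Lemma~\ref{lem:cong}: it guarantees that $\Pi_D$ is a zigzag language of permutations and that the set $R_D$ underlying it is a system of representatives, exactly one reorientation per equivalence class of $\AR_D/{\equiv}$. Every zigzag language contains $\ide_n$ --- an immediate induction on the defining alternative (z1)/(z2), since in both cases inserting $n$ at the last position sends $\ide_{n-1}$ to $\ide_n$ --- so Theorem~\ref{thm:jump} applies verbatim with $\pi_0=\ide_n$ and tells us that Algorithm~J visits every permutation of $\Pi_D$ exactly once, without premature termination. Because $D$ is peo-consistent, $1,\ldots,n$ is a perfect elimination order of its underlying undirected graph, so the map $E\mapsto\pi_E$ of Lemma~\ref{lem:peo2perm} is injective; together with $\Pi_D=\{\pi_E\mid E\in R_D\}$, the output $\pi_{E_1},\pi_{E_2},\ldots$ of Algorithm~J corresponds bijectively to a listing $E_1,E_2,\ldots$ of the reorientations in $R_D$, equivalently to a listing $[E_1]_\equiv,[E_2]_\equiv,\ldots$ in which every class of $\AR_D/{\equiv}$ appears exactly once. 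This settles the requirement that the path visit each vertex of the cover graph once.

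For the adjacency requirement I would simply quote Lemma~\ref{lem:cong-J}: whenever $\pi_{E_i}$ and $\pi_{E_{i+1}}$ are visited consecutively, the classes $[E_i]_\equiv$ and $[E_{i+1}]_\equiv$ form a cover relation in $\AR_D/{\equiv}$, hence are adjacent in its cover graph. Combining the two parts shows that $[E_1]_\equiv,[E_2]_\equiv,\ldots$ is a Hamilton path in the cover graph of $\AR_D/{\equiv}$, as claimed; the remark about skeletal digraphs then follows from Lemma~\ref{lem:skeletal}, which places the skeletal class inside the peo-consistent class.

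The genuine obstacle, already disposed of before the statement, lives in Lemma~\ref{lem:cong-J} (and behind it Lemma~\ref{lem:cong}): one must match the purely combinatorial operation ``minimal jump of the largest possible value'' with a move in the reorientation lattice. A jump of the top value $n$ has to be recognized as a single step between consecutive chosen representatives along one rail $r(E)$ --- consecutive because each class meets a rail in an interval (Lemma~\ref{lem:rail}) and $R_D$ picks one element per class met; a jump of a smaller value has to be propagated through the projection $p$ and resolved by induction on $n$, using Lemma~\ref{lem:projection} to certify that the corresponding step in $\AR_{D_{n-1}}/{\equiv^*}$ lifts to a cover in $\AR_D/{\equiv}$; and the transitions between the reversed insertion blocks $\lvec{c}(\pi_E)$ and $\rvec{c}(\pi_F)$ have to be seen to correspond exactly to moving between the minima $c(E),c(F)$, or between the maxima $\bar{c}(E),\bar{c}(F)$, of adjacent rails, which is where the ladder structure of Lemma~\ref{lem:ladder} and the forcing rules of Lemma~\ref{lem:forcing} come in. None of this recurs at the level of Theorem~\ref{thm:mainquotient}: there the argument is purely ``combine Theorem~\ref{thm:jump}, Lemma~\ref{lem:cong}, and Lemma~\ref{lem:cong-J}'', so I would keep its proof to a few lines.
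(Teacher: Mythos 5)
Your proposal is correct and follows exactly the route taken in the paper, which itself states the theorem as a direct combination of Theorem~\ref{thm:jump}, Lemma~\ref{lem:cong}, and Lemma~\ref{lem:cong-J}. The only detail you add explicitly --- that every zigzag language contains $\ide_n$, so Theorem~\ref{thm:jump} applies --- is a correct and standard observation implicit in the paper's citation of Theorem~\ref{thm:jump}.
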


By the remarks after Lemma~\ref{lem:peo2perm}, the minimum~$D$ of the acyclic reorientation lattice~$\AR_D$ is encoded by the identity permutation~$\pi_D=\ide_n$, which by Theorem~\ref{thm:jump} can be used for initialization in Algorithm~J, if and only if the vertex~$i$ is a sink in~$D_i$ for all $i\in[n]$.

\bibliographystyle{alpha}
\bibliography{refs}

\end{document}